\newtheorem{theorem}{Theorem}[section]
\newtheorem{lemma}[theorem]{Lemma}
\newtheorem{proposition}[theorem]{Proposition}
\newtheorem{corollary}[theorem]{Corollary}
\newtheorem{remark}[theorem]{Remark}
\theoremstyle{definition}
\numberwithin{equation}{section}
\def\R{{\mathbb R}}
\def\eps{\varepsilon}
\def\E{{\mathbb E}}
\def\P{{\mathbb P}}
\def\Z{{\mathbb Z}}
\begin{document}

\title[Local Nondeterminism and Local times of the stochastic wave equation]
{Local nondeterminism and local times of the stochastic wave equation driven by 
fractional-colored noise}
\author{Cheuk Yin Lee}
\address{Institut de math\'ematiques, \'Ecole polytechique f\'ed\'erale de Lausanne,
Station 8, CH-1015 Lausanne, Switzerland}
\email{cheuk.lee@epfl.ch}

\keywords{Stochastic wave equation; local time; local nondeterminism}

\subjclass[2010]{
60G15, 
60H15, 
60J55.  
}

\begin{abstract}
We investigate the existence and regularity of the local times of 
the solution to a linear system of stochastic wave equations 
driven by a Gaussian noise that is fractional in time and colored in space.
Using Fourier analytic methods, we establish strong local nondeterminism properties of 
the solution and the existence of jointly continuous local times. 
We also study the differentiability and moduli of continuity of the local times and deduce 
some sample path properties of the solution.
\end{abstract}

\maketitle

\section{Introduction}

The stochastic wave equation is a fundamental 
stochastic partial differential equation (SPDE) of hyperbolic type.
The wave equation driven by the space-time white noise and 
temporally-white, spatially-correlated noises has been studied considerably
in the literature; see e.g.\ \cite{C70, CN88, DF98, DS09, MS99, M97, PZ00, W86}.
Since the recent development of stochastic calculus with respect to fractional
Brownian motion, there have been growing interests in the studies of 
SPDEs driven by fractional Brownian motion and other
fractional Gaussian noises in time and/or in space, for which we refer, among others, to 
\cite{BC16, BJQ15, BT08, CHKN18, HHNT15, HSWX20, MN03, NV06, QT07}.

In this paper, we study the stochastic wave equation driven by a Gaussian noise 
that is fractional with Hurst index $1/2 < H < 1$ in time (or white in time), and is
colored in space with some spatial covariance.
More precisely, we consider the following system of linear stochastic wave equations
\begin{equation}\label{SWE}
\begin{split}
\begin{cases}
\displaystyle\frac{\partial^2}{\partial t^2} u_j(t, x) = \Delta u_j(t, x) + \dot W_j(t, x)
& \text{for } t \ge 0, x \in \R^N,\\
u_j(0, x) = 0, \quad \displaystyle\frac{\partial}{\partial t}u_j(t, 0) = 0,
& j = 1, \dots, d,
\end{cases}
\end{split}
\end{equation}
where $\Delta$ is the Laplacian in variable $x$
and $\dot W = (\dot W_1, \dots, \dot W_d)$ is a $d$-dimensional Gaussian noise.
We assume that $\dot W_1, \dots, \dot W_d$ are i.i.d.\
and formally, each $\dot W_j$ has covariance
\begin{equation}\label{Wcov}
\E[\dot W_j(t, x) \dot W_j(s, y)] = \rho_H(t-s) f_\beta(x-y),
\end{equation}
where
\[ \rho_H(t-s) = \begin{cases}
\delta(t-s) & \text{if }H = 1/2,\\
|t-s|^{2H-2} & \text{if }1/2 < H < 1
\end{cases} \]
and the spatial covariance function $f_\beta$ is the Fourier transform of a measure
$\mu_\beta$ which has a density $h_\beta(\xi)$ (with respect to the Lebesgue measure) 
that is comparable to $|\xi|^{-\beta}$, where $0 < \beta < N$.

For the stochastic wave equation driven by the space-time white noise,
there is no function-valued solution when the spatial dimension $N$ is greater than 1.
An approach to study this equation in higher dimensions is to consider
a Gaussian noise that is white in time but has some correlation in space,
so that the equation admits a solution as a real-valued process
(or random field); see the works of \cite{DF98, D99, CD08}.
In particular, their results show that when $\dot{W}$ is white in time, i.e., $H = 1/2$, 
the equation \eqref{SWE} has a unique random field solution if and only if $N-\beta < 2$.
The fractional case $1/2 < H < 1$ was studied in \cite{BT10}.
It was proved that \eqref{SWE} has a unique random field solution if and only if 
$N-\beta < 2H+1$. 
Besides existence and uniqueness, 
results on space-time H\"older regularities and hitting probabilities for the solution of 
\eqref{SWE} can be found in \cite{DS10, CT14}.
To the author's knowledge, not much else is known about 
\eqref{SWE} in the case of $H \ne 1/2$ and $N \ge 2$.

The purpose of this paper is to study the local times (or occupation densities) 
of the stochastic wave equation driven by fractional-colored noise.
The solution of \eqref{SWE} is an $\R^d$-valued Gaussian random field
$u = \{ u(t, x) : t \ge 0, x \in \R^N\}$.
Our approach to the study of local times of $u$ 
is based on the Fourier analytic method due to Berman \cite{B69, B73} 
and the use of the local nondeterminism property, 
which is one of the main tools for studying local times of Gaussian random fields.
This approach was used in \cite{OT13} to study local times of the stochastic heat 
equation in the time variable; a different approach was used in \cite{T13}.

The notion of local nondeterminism (LND) for Gaussian processes 
was first introduced by Berman \cite{B73} to study the existence of jointly continuous
local times, and was extended by Pitt \cite{P78} to study the local 
times of multivariate Gaussian random fields.
Various forms of LND have been studied in the literature, e.g., 
\cite{C78, CD82, MP87, KWX06, X09}.
In particular, the property of strong local nondeterminism has been developed.
An example of Gaussian random field that satisfies this stronger property of LND
is the multiparameter fractional Brownian motion \cite{P78}.
The investigation of the strong LND property is of interest because 
it has found applications in studying 
various properties of Gaussian random fields such as exact modulus of continuity of 
sample paths, small ball probabilities and fractal properties. 
We refer to the survey of Xiao \cite{X08} for details.

A result of Walsh \cite[Theorem 3.1]{W86} shows that the solution of 
the linear stochastic wave equation in one spatial dimension driven by 
the space-time white noise can be represented by a modified Brownian 
sheet.
It is known that the Brownian sheet does not satisfy the LND property in the sense of 
Berman or Pitt, but it satisfies a different type of LND called sectorial LND 
\cite{KWX06, KX04}.
This leads to the natural question of whether the solution of the stochastic wave 
equation satisfies the LND property.

Recently, Lee and Xiao \cite{LX19} considered the linear stochastic wave equation 
driven by a Gaussian noise that is white in time and colored in space with 
spatial covariance given by the Riesz kernel, and showed that 
the solution satisfies a new type of strong LND in the form of a spherical integral,
which turns out to be useful in proving the exact modulus of continuity of the sample 
functions.

The contributions of the present paper are as follows.
We extend the result of \cite{LX19} to the case of fractional-colored noise
and prove that the solution $u$ of \eqref{SWE} satisfies a spherical integral form of strong LND in $(t, x)$ (Proposition \ref{prop:LND}). 
We also study the LND property of $u$ in $t$ or $x$ when the other variable is 
held fixed. Since the LND property in joint variable $(t, x)$ takes a different form from 
those that are studied previously, e.g., in \cite{B73, P78, GH80, X97, WX11}, 
their local times results cannot be applied directly.
In this case, we exploit this new type of LND property to study the local 
times of the stochastic wave equation.
More specifically, in Theorem \ref{thm:JC}, we prove that $u$, 
as a Gaussian random field $(t, x) \mapsto u(t, x)$, 
has a jointly continuous local time if $\frac{1}{2}(2H+1-N+\beta)d < 1+N$.
Moreover, we obtain differentiability results for the local times in the space variable,
and the local and uniform moduli of continuity of the local times in the set variable (Theorems \ref{thm:D} and \ref{MC:L*}).
Our results lead to sample function properties that are new for the stochastic wave 
equation with fractional-colored noise, 
including the exact uniform modulus of continuity of $(t, x) \mapsto u(t, x)$ 
(Theorem \ref{thm:MC})
and the property that $(t, x) \mapsto u(t, x)$ is nowhere differentiable 
(Theorem \ref{thm:ND}).

The rest of the paper is organized as follows.
In Section 2, we recall the theory of the stochastic wave equation with 
fractional-colored noise and some facts about local times.
In Section 3, we study the local nondeterminism property for the solution $u$ 
of \eqref{SWE} and give the exact modulus of continuity of the sample functions.
In Section 4, we derive moment estimates for the local times of $u$ using the LND 
property and prove the existence of jointly continuous local times.
Finally, in Section 5, we study regularities of the local times
including the differentiability in the set variable and the moduli of continuity in the set 
variable, and we use the latter to derive lower envelops of the sample path oscillations.

\section{Preliminaries}

This section contains preliminaries about the stochastic wave equation and local times.
We first introduce some notations and review the theory for
fractional-colored noises and the solution of the equation \eqref{SWE}.
After that, we will recall the definition and some properties of the local times 
of stochastic processes.

The Fourier transform of a function $\phi: \R^N \to \R$ (or $\mathbb{C}$) is defined by 
\[\mathscr{F}\phi(\xi) = \widehat\phi(\xi) = \int_{\R^N} e^{-ix \cdot \xi}\phi(x) dx\]
whenever the integral is well-defined.
We write $x \cdot \xi = \sum_{j = 1}^N x_j \xi_j$ for the usual dot product in $\R^N$.
We assume that, the function $f_\beta$ in \eqref{Wcov}, where $0 < \beta < N$, 
is the Fourier transform of a measure $\mu_\beta$ in the sense that 
\[ \int_{\R^N} f_\beta(x) \phi(x) dx = \int_{\R^N} \mathscr{F}\phi(\xi) \mu_\beta(d\xi) \]
for all $\phi$ in the Schwartz space $\mathcal{S}(\R^N)$ of rapidly decreasing functions, 
and that $\mu_\beta$ has a density $h_\beta(\xi)$ with respect to the Lebesgue measure satisfying
\begin{equation}\label{h}
C_1|\xi|^{-\beta} \le h_\beta(\xi) \le C_2 |\xi|^{-\beta},
\end{equation}
where $C_1$ and $C_2$ are positive finite constants.
A typical example of $f_\beta$ is the Riesz kernel:
\[ f_\beta(\xi) = |\xi|^{-(N-\beta)}, \quad 0 < \beta < N, \]
which is the Fourier transform of $\mu_\beta(d\xi) = C |\xi|^{-\beta} d\xi$,
where $C$ is a suitable constant depending on $\beta$ and $N$.
See \cite[\S V]{S}.

Let us recall the random field approach for the fractional-colored noise $\dot W$
and the construction of the solution of \eqref{SWE} proposed by Balan and Tudor 
\cite{BT08, BT10}.
Let $C_c^\infty(\R_+ \times \R^N)$ denote the space of smooth, compactly supported
functions on $\R_+ \times \R^N$,
and define the inner product $\langle \cdot, \cdot \rangle_{\mathcal{HP}}$ on 
$C_c^\infty(\R_+ \times \R^N)$ by
\begin{equation}\label{HP}
\begin{split}
\langle \varphi, \psi\rangle_{\mathcal{HP}}
& = \int_{\R_+} dt \int_{\R_+} ds \int_{\R^N} dx \int_{\R^N} dy\,
\varphi(t, x) \, \rho_H(t-s) f_\beta(x-y)\, \psi(s, y)\\
& = C \int_{\R_+} dt \int_{\R_+} ds \int_{\R^N} d\xi\,
\mathscr{F}\varphi(t, \cdot)(\xi)\, \overline{\mathscr{F}\psi(s, \cdot)(\xi)} \,
\rho_H(t-s) \, h_\beta(\xi).
\end{split}
\end{equation}
The Hilbert space $\mathcal{HP}$ associated with the noise $\dot W$
is defined as the completion of 
$C_c^\infty(\R_+ \times \R^N)$ with respect to the inner product 
$\langle \cdot, \cdot \rangle_{\mathcal{HP}}$.
This Hilbert space can be identified as the space of all distribution-valued functions
$S: \R_+ \to \mathcal{S}'(\R^N)$ such that for each $t \ge 0$, 
$\mathscr{F}S(t)$ is a function, and
\[ \int_{\R_+} dt \int_{\R_+} ds \int_{\R^N} d\xi\, 
\mathscr{F}S(t)(\xi)\, \overline{\mathscr{F}S(s)(\xi)} 
\, \rho_H(t-s) \, h_\beta(\xi) < \infty. \]
The Gaussian noise $\dot W$ can be defined as a generalized 
Gaussian process $\{ W(\varphi) : \varphi \in C^\infty_c(\R_+\times \R^N)\}$ 
with mean zero and covariance
$\E[W(\varphi)W(\psi)] = \langle \varphi, \psi\rangle_{\mathcal{HP}}$.
Then $W$ produces an isometry from $\mathcal{HP}$ into 
a Gaussian subspace of $L^2(\P)$
\[ \varphi \mapsto W(\varphi) =: \int_{\R_+} \int_{\R^N} \varphi(s, y) W(ds, dy), \]
such that $\E[W(\varphi)W(\psi)] = \langle \varphi, \psi\rangle_{\mathcal{HP}}$
for all $\varphi, \psi \in \mathcal{HP}$.

Let $g_{t, x}(s, y) = G(t-s, x-y) {\bf 1}_{[0, t]}(s)$, where $G(t, x)$ is the fundamental 
solution of the wave equation.
By Theorem 3.1 of \cite{BT10}, under the assumption \eqref{h},
$g_{t, x} \in \mathcal{HP}$ if and only if 
\begin{equation}\label{beta0}
\beta > N - 2H - 1,
\end{equation}
and when \eqref{beta0} holds, 
\eqref{SWE} has a random field solution that is mean square continuous in $(t, x)$:
\[ u(t, x) = W(g_{t, x}) = \int_0^t \int_{\R^N} G(t-s, x-y) W(ds, dy). \]
Note that $u = \{ u(t, x) : t \ge 0, x \in \R^N \}$ is an $\R^d$-valued 
Gaussian field with i.i.d.\ components.

If the Fourier transforms of $s \mapsto \mathscr{F}\varphi(s, \cdot)(\xi)$ and 
$s \mapsto \mathscr{F}\psi(s, \cdot)(\xi)$ exist,
they are equal to the Fourier transforms of $\varphi$ and $\psi$ in $(t, x)$-variables,
denoted by $\mathscr{F}\varphi(\tau, \xi)$ and $\mathscr{F}\psi(\tau, \xi)$, 
respectively.
In this case, it follows from \eqref{HP} and the Plancherel theorem that 
\begin{equation}\label{HPF}
\langle \varphi, \psi\rangle_{\mathcal{HP}}
= C \int_\R d\tau \int_{\R^N} d\xi\, 
\mathscr{F}\varphi(\tau, \xi)\, \overline{\mathscr{F}\psi(\tau, \xi)}\, |\tau|^{1-2H} 
\,h_\beta(\xi).
\end{equation}
This formula remains true when $H = 1/2$.
Recall that the Fourier transform of the fundamental solution in the space variable
is $\mathscr{F}G(t, \cdot)(\xi) = |\xi|^{-1}\sin(t|\xi|)$; see \cite[Ch.5]{F}. 
Then
\begin{equation*}
\mathscr{F}g_{t, x}(s, \cdot)(\xi) = e^{-ix\cdot \xi}\, \frac{\sin((t-s)|\xi|)}{|\xi|} {\bf 1}_{[0, t]}(s)
\end{equation*}
and the Fourier transform of $s \mapsto \mathscr{F}g_{t, x}(s, \cdot)(\xi)$ is 
\begin{equation}\label{Fg}
\mathscr{F}g_{t, x}(\tau, \xi) = \frac{e^{-ix\cdot \xi}}{2|\xi|}
\left( \frac{e^{-it\tau} - e^{it|\xi|}}{\tau+|\xi|} - \frac{e^{-it\tau}-e^{-it|\xi|}}{\tau - |\xi|} \right).
\end{equation}
This and \eqref{HPF} provide a formula for the covariance of $u(t, x)$.

Next, let us recall the definition and properties of local times.
Let $u = \{u(z) : z \in \R^k \}$ be a random field with values in $\R^d$.
The occupation measure of $u$ on a Borel set $T \in \mathscr{B}(\R^k)$ is 
the random measure defined by
\[
\nu_T(B) = \nu_T(B, \omega) 
= \lambda_k\{ z \in T : u(z) \in B \}, \quad B \in \mathscr{B}(\R^d),
\]
where $\lambda_k$ denotes the Lebesgue measure on $\R^k$.
We say that $u$ has a local time on $T$ if $\nu_T$ is a.s.\ absolutely continuous 
with respect to $\lambda_d$, the Lebesgue measure on $\R^d$.
A version of the Radon--Nikodym derivative, denoted by $L(v, T) = L(v, T, \omega)$, 
$v \in \R^d$, is called a version of the local time.
It follows from the definition that for all $B \in \mathscr{B}(\R^d)$,
\begin{equation}\label{Def:LT}
\lambda_k\{ z \in T : u(z) \in B \} = \int_B L(v, T) \, dv.
\end{equation}
Obviously, if $u$ has a local time on $T$, then it has a local time on 
every $S$ in $\mathscr{B}(T)$, the collection of all Borel subsets of $T$.
We say that $L(v, S)$ is a kernel if
\begin{enumerate}
\item[(i)] For each $S \in \mathscr{B}(T)$, the function 
$(v, \omega) \mapsto L(v, S, \omega)$ is 
$\mathscr{B}(\R^d) \times \mathscr{F}$-measurable;
\item[(ii)] For each $(v, \omega) \in \R^d \times \Omega$, the set function 
$S \mapsto L(v, S, \omega)$ is a measure on $(T, \mathscr{B}(T))$.
\end{enumerate}
It is desirable to work with a version of the local time that is a kernel because 
it satisfies the following properties \cite[Theorem (6.4)]{GH80}:
\begin{enumerate}
\item[(i)] Occupation density formula: for every nonnegative Borel function $f(z, v)$ on $T \times \R^d$,
\begin{equation*}
\int_T f(z, u(z)) \, dz = \int_{\R^d} dv \int_T f(z, v) L(v, dz).
\end{equation*}
\item[(ii)] $L(v, M_v^c) = 0$ for a.e.~$v$, where $M_v = \{ z \in T : u(z) = v \}$, i.e., 
the support of the measure $L(v, \cdot)$ is contained in the $v$-level set $M_v$ of $u$.
\end{enumerate}

For any $z = (z_1, \dots, z_k) \in \R^k$, let $(-\infty, z]$ denote the unbounded interval
$\prod_{j=1}^k (-\infty, z_j]$ in $\R^k$ with upper right corner at $z$.
Let $T$ be a compact interval in $\R^k$ and $Q_z = (-\infty, z] \cap T$.
We say that a version of the local time $L$ is jointly continuous on $T$ if 
$(v, z) \mapsto L(v, Q_z)$ is jointly continuous on $\R^d \times T$.
It is known that 
if $L$ is jointly continuous, then $L(v, Q_z)$ can be uniquely extended to a kernel 
$L(v, S)$, $S \in \mathscr{B}(T)$, satisfying the property that $L(v, J) = 0$ 
for all $v \not\in \overline{u(J)}$ and all intervals $J \subset T$ with rational endpoints;
and if, in addition, $u$ has continuous sample paths, then 
$L(v, M_v^c) = 0$ for all $v \in \R^d$;
see \cite{B70}, \cite[p.12]{GH80}, \cite[p.223]{A}.
Since the local times serve as a natural measure on the level sets of $u$, 
they are useful in studying properties of the level sets
\cite{B72, GH80, MP87, X97}.

\medskip

\section{Local nondeterminism}

This section is devoted to studying LND property
of the solution $u(t, x)$ of \eqref{SWE}.
In what follows, we denote
\begin{equation*}
\alpha = \frac{2H+1-N+\beta}{2}
\end{equation*}
and assume
\begin{equation}\label{beta}
N - 2H - 1 < \beta < N - 2H + 1
\end{equation}
so that the solution $u(t, x)$ exists by \eqref{beta0}, and that $0 < \alpha < 1$. 
In this case, the proposition below implies that the sample functions 
$(t, x) \mapsto u(t, x)$ are a.s.\ locally H\"older continuous of any order 
strictly less than $\alpha$.

\begin{proposition}\label{prop:HR}
For any $0 < a < b$ and $M > 0$, there exist positive finite constants 
$C_1$ and $C_2$ such that for all $(t, x), (s, y) \in [a, b] \times [-M, M]^N$,
\begin{equation}\label{SWEvar}
C_1 (|t-s| + |x-y|)^{2\alpha} \le \E(|u(t, x) - u(s, y)|^2) \le 
C_2 (|t-s| + |x-y|)^{2\alpha}.
\end{equation}
\end{proposition}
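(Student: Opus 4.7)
The plan is to rewrite the variance of the increment as a Fourier integral and analyze it using the explicit form \eqref{Fg} of $\mathscr{F}g_{t,x}$. Since $u_j(t,x) = W(g_{t,x})$, we have $\E[(u_j(t,x) - u_j(s,y))^2] = \|g_{t,x} - g_{s,y}\|^2_{\mathcal{HP}}$; combining this with \eqref{HPF} and the two-sided bound \eqref{h} reduces the proposition to estimating
$$
\int_\R d\tau \int_{\R^N} d\xi\, |\mathscr{F}g_{t,x}(\tau, \xi) - \mathscr{F}g_{s,y}(\tau, \xi)|^2 \, |\tau|^{1-2H}\, |\xi|^{-\beta}
$$
from both above and below by a constant multiple of $(|t-s| + |x-y|)^{2\alpha}$. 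Writing $\mathscr{F}g_{t,x}(\tau,\xi) = e^{-ix\cdot \xi}\Theta_t(\tau, \xi)$, where $\Theta_t$ is the bracketed factor in \eqref{Fg}, the analysis cleanly separates the spatial phase from the time-frequency profile $\Theta_t$.

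For the upper bound I would apply the triangle inequality $\|g_{t,x}-g_{s,y}\|_{\mathcal{HP}}\le \|g_{t,x}-g_{t,y}\|_{\mathcal{HP}}+\|g_{t,y}-g_{s,y}\|_{\mathcal{HP}}$ and handle each piece by a change-of-variables argument. For the spatial increment the integrand factors as $4\sin^2(\tfrac{1}{2}(x-y)\cdot\xi)|\Theta_t(\tau,\xi)|^2$; integrating first in $\tau$ by the Parseval identity underlying \eqref{HPF} turns $\int_\R|\Theta_t|^2|\tau|^{1-2H}d\tau$ into a temporal double integral comparable to $\min(t^{2H+2},\,t|\xi|^{-(2H+1)})$ (which becomes clear after scaling $u|\xi|\mapsto w$). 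After rotating so $x-y$ is parallel to $e_1$ and substituting $\xi=\eta/|x-y|$, the large-$|\eta|$ regime produces the factor $|x-y|^{1+2H+\beta-N}=|x-y|^{2\alpha}$, the condition \eqref{beta} ensuring the $\eta$-integral $\int \sin^2(\eta_1/2)|\eta|^{-(1+2H+\beta)}d\eta$ converges at both $0$ and $\infty$. For the temporal increment with $s<t$ I would decompose
$$
\Theta_t-\Theta_s=\int_s^t e^{-i\tau r}\frac{\sin((t-r)|\xi|)}{|\xi|}\,dr+\int_0^s e^{-i\tau r}\frac{\sin((t-r)|\xi|)-\sin((s-r)|\xi|)}{|\xi|}\,dr,
$$
apply the identity $\sin((t-r)|\xi|)-\sin((s-r)|\xi|)=2\cos((\tfrac{t+s}{2}-r)|\xi|)\sin(\tfrac{(t-s)|\xi|}{2})$ together with $|\sin z|\le\min(1,|z|)$, and conclude via an analogous scaling $\xi=\eta/(t-s)$.

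For the lower bound I would use the elementary Hilbert-space inequality $\|a-b\|^2\ge\tfrac{1}{2}\|a-c\|^2-\|b-c\|^2$, applied with $c=g_{t,y}$ or $c=g_{s,x}$ as appropriate, to reduce matters to the purely spatial lower bound $\|g_{t,x}-g_{t,y}\|^2_{\mathcal{HP}}\ge c|x-y|^{2\alpha}$ and the purely temporal lower bound $\|g_{t,x}-g_{s,x}\|^2_{\mathcal{HP}}\ge c|t-s|^{2\alpha}$. A short case analysis on whether $|t-s|$ or $|x-y|$ contributes more than half of $|t-s|+|x-y|$, combined with the matching upper bounds from the previous step, then yields the required lower bound. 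The spatial lower bound follows from the same calculation as above, now using the lower side of \eqref{h} and the strict positivity of the limiting integrand $\sin^2(\eta_1/2)|\eta|^{-(1+2H+\beta)}$.

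The main obstacle is the temporal lower bound. In the white-in-time case $H=1/2$ it is clean: the decomposition $u(t,x)-u(s,x)=\int_0^s\!\int[G(t-u,x-y)-G(s-u,x-y)]W(du,dy)+\int_s^t\!\int G(t-u,x-y)W(du,dy)$ writes the increment as a sum of two independent pieces, so its variance is at least that of the second piece, and an isometry computation followed by the scaling $r|\xi|=w$ gives $\gtrsim|t-s|^{2\alpha}$. In the genuinely fractional case $1/2<H<1$ this independence is destroyed by the long-range temporal correlations, and I would instead extract the lower bound directly from the Fourier integral by restricting to the frequency region $|\xi|(t-s)\asymp|\tau|(t-s)\asymp 1$ and showing that $|\Theta_t-\Theta_s|$ is bounded below by a constant multiple of $(t-s)/|\xi|$ on a subset of positive measure there; ruling out oscillatory cancellations in that region is the most delicate part of the argument.
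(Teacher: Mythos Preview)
The paper's proof is essentially one line: use \eqref{h} to replace $h_\beta(\xi)$ by $|\xi|^{-\beta}$, and then cite \cite{CT14}, where \eqref{SWEvar} for the Riesz kernel has already been established. You are instead attempting to reprove that result from scratch via direct Fourier analysis, which is considerably more work.

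Your upper-bound sketch is sound, but the lower-bound reduction has a gap. The inequality $\|a-b\|^2 \ge \tfrac{1}{2}\|a-c\|^2 - \|b-c\|^2$ together with the dichotomy on which of $|t-s|$, $|x-y|$ exceeds half the sum does \emph{not} close: if, say, $|x-y| \ge |t-s|$ but the two are comparable, then $\tfrac{1}{2}c_1|x-y|^{2\alpha} - C_2|t-s|^{2\alpha}$ can be negative, since there is no reason the constants coming from the separate spatial and temporal estimates satisfy $C_2 \le c_1/2$. Your case split only handles the regime where one variable dominates the other by a large fixed factor; the diagonal regime $|t-s| \asymp |x-y|$ requires a direct joint analysis. (One clean route available in this paper is to observe that Proposition~\ref{prop:LND} with $n=1$ already yields the full lower bound, and its proof is independent of the present proposition.) You are also right that the temporal lower bound for $1/2 < H < 1$ is the crux, but your shell-restriction idea is only stated, not executed; given that the paper simply defers all of this to \cite{CT14}, you may prefer to do the same.
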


\begin{proof}
We may assume that $d = 1$.
By \eqref{HPF}, we have
\begin{equation*}
\E(|u(t, x) - u(s, y)|^2)
= C \int_\R d\tau \int_{\R^N} d\xi\,
|\mathscr{F} g_{t, x}(\tau, \xi) - \mathscr{F} g_{s, y}(\tau, \xi)|^2\,
|\tau|^{1-2H}\, h_\beta(\xi).
\end{equation*}
Then, by the assumption \eqref{h} for $h_\beta$, 
it is enough to consider the case that $h_\beta(\xi) = |\xi|^{-\beta}$.
But for this case, \eqref{SWEvar} has been proved in \cite{CT14}.
\end{proof}

The following LND result is the basis for the study of regularity properties of local times 
in this paper.
This result extends Proposition 2.1 of \cite{LX19} and shows that 
$u(t, x)$ satisfies a strong LND property in the form of a spherical integral.
The proof involves a Fourier analytic method.

\begin{proposition}\label{prop:LND}
For any $0 < a < \infty$, there exist constants $C > 0$ and $r_0 > 0$ 
such that for all integers $n \ge 1$, for all 
$(t, x), (t^1, x^1), \dots, (t^n, x^n) \in [a, \infty) \times \R^N$ with 
$\max_j(|t-t^j| + |x-x^j|) \le r_0$, we have
\begin{equation}\label{LND:ineq}
\mathrm{Var}(u_1(t, x)|u_1(t^1, x^1), \dots, u_1(t^n, x^n)) 
\ge C \int_{\mathbb{S}^{N-1}} \min_{1 \le j \le n}|(t-t^j) + (x-x^j) \cdot w|^{2\alpha} 
\,\sigma(dw),
\end{equation}
where $\sigma$ is the surface measure on the unit sphere $\mathbb{S}^{N-1}$.
\end{proposition}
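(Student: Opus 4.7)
The plan is to follow the Fourier-analytic strategy of \cite{LX19}, adapted to handle the fractional-in-time weight $|\tau|^{1-2H}$ and the general spatial density $h_\beta$ satisfying \eqref{h}. First, by a standard Gaussian reduction, the conditional variance equals $\inf_{a_1, \dots, a_n \in \R} \| g_{t,x} - \sum_j a_j g_{t^j, x^j} \|_{\mathcal{HP}}^2$. Using \eqref{HPF} and the lower bound in \eqref{h}, I would reduce the problem to showing
\[ \inf_{a_j} \int_\R d\tau \int_{\R^N} d\xi \left| \mathscr{F} g_{t,x}(\tau,\xi) - \sum_j a_j \mathscr{F} g_{t^j,x^j}(\tau,\xi) \right|^2 |\tau|^{1-2H} |\xi|^{-\beta} \]
is at least a constant multiple of the desired spherical integral, with $\alpha = (2H+1-N+\beta)/2$ tracking the joint $(\tau,\xi)$-scaling.

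Next I would exploit the explicit structure of $\mathscr{F} g_{t,x}(\tau, \xi)$ in \eqref{Fg}. This function is concentrated near the ``light cone'' $\tau = \pm|\xi|$: writing $\xi = \rho w$ with $\rho > 0$ and $w \in \mathbb{S}^{N-1}$, near $\tau = -\rho$ the dominant phase is $e^{i\rho(t - x\cdot w)}$, and near $\tau = +\rho$ it is $e^{-i\rho(t + x\cdot w)}$. Passing to polar coordinates in $\xi$ produces the spherical measure $\sigma(dw)$ together with a radial weight $\rho^{N-1-\beta}\, d\rho$, and after a change of variables $\tau = \mp\rho + \eta$ with $|\eta| \ll \rho$, the $(\rho, \eta)$-integral in each fixed direction $w$ takes the form of a weighted one-dimensional Fourier integral whose homogeneity matches that of a fractional-type Gaussian process of index $\alpha$ in the scalar time parameter $t \mp x \cdot w$.

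For each direction $w$, I would then apply a Fourier duality argument: construct a test function $\phi_w(\tau, \xi)$ concentrated on the light cone in direction $w$ at the scale $\delta_w := \min_j |(t-t^j) + (x-x^j) \cdot w|$, with bounded norm in the $L^2$-space dual to the weight $|\tau|^{1-2H}|\xi|^{-\beta}$, such that
\[ \left|\bigl\langle \mathscr{F} g_{t,x} - \sum_j a_j \mathscr{F} g_{t^j, x^j},\, \phi_w\bigr\rangle\right| \ge c\,\delta_w^{\alpha} \]
\emph{uniformly} in the coefficients $a_j$. Cauchy--Schwarz then produces a pointwise-in-$w$ lower bound on the squared $\mathcal{HP}$-norm; integrating over $w \in \mathbb{S}^{N-1}$ \emph{before} taking the infimum in $a_j$ yields the spherical integral bound in \eqref{LND:ineq}. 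The smallness hypothesis $\max_j(|t-t^j|+|x-x^j|) \le r_0$ enters to guarantee that the oscillatory phases $e^{\pm i\rho((t-t^j) \mp (x - x^j)\cdot w)}$ stay well-separated on the support of $\phi_w$ across the relevant range of $\rho$.

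The hard part is the Fourier-duality construction. The difficulty is to choose $\phi_w$ so that the lower bound on the pairing survives being taken over \emph{all} $a_j \in \R^n$ and is uniform in $n$; this requires a careful windowing in $(\tau, \rho)$ combined with a nonstationary-phase argument showing that the contributions from points $(t^j, x^j)$ with $(t-t^j) + (x-x^j) \cdot w$ far from $\delta_w$ decay sufficiently fast in the Fourier pairing and can therefore be absorbed. The new ingredients beyond \cite{LX19}, namely the weight $|\tau|^{1-2H}$ and the mere two-sided bound \eqref{h} on $h_\beta$, should enter only through the scaling exponents of $\tau$ and $\rho$ that fix $\alpha$, so I would expect the argument to close once the scales in the test function are adjusted accordingly.
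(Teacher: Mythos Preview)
Your overall strategy matches the paper's: reduce the conditional variance to an $\mathcal{HP}$-norm, use \eqref{h} and spherical coordinates $\xi = \rho w$, and for each fixed $w$ obtain a pointwise lower bound via a test-function pairing and Cauchy--Schwarz, then integrate over $\mathbb{S}^{N-1}$. The gap is in how you propose to make the pairing lower bound uniform in $a_1, \dots, a_n$. A nonstationary-phase or decay argument cannot do this: if $P_j := \langle \mathscr{F}g_{t^j,x^j}, \phi_w\rangle$ is nonzero for even a single $j$, then the choice $a_j = \langle \mathscr{F}g_{t,x}, \phi_w\rangle / P_j$ (other coefficients zero) makes the full pairing vanish, so no lower bound survives. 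Decay is never enough; you need $P_j = 0$ \emph{exactly} for every $j$, and uniformly in $n$.

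The paper achieves this by choosing the test function so that its \emph{inverse} Fourier transform is compactly supported. Fix bumps $\phi$ supported on $[0,a/2]$ with $\int\phi=1$ and $\psi$ supported on $[-1,1]$ with $\psi(0)=1$, set $r(w)=\delta_w$, $\psi_{r(w)}(y)=r(w)^{-1}\psi(r(w)^{-1}y)$, and pair against $e^{-i\rho(t+x\cdot w)}\widehat\phi(\tau-\rho)\widehat\psi_{r(w)}(\rho)$. Applying Plancherel in $\tau$ and then Fourier inversion in $\rho$ turns the pairing into a finite combination of values of $\psi_{r(w)}$: the $(t,x)$ term contributes $\psi_{r(w)}(0)=r(w)^{-1}$, while every $(t^j,x^j)$ term lands at a point of absolute value $\ge r(w)$ --- either directly from the definition of $r(w)$, or, for the ``other light-cone branch'' contributions of the form $\psi_{r(w)}(\cdots - 2(t^j-s))$, from the support restriction $s\in[0,a/2]$ together with $t^j\ge a$ and the choice $r_0=a/2$ --- and hence vanishes exactly, regardless of $a_j$. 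The exponent $2\alpha$ then emerges from the Cauchy--Schwarz dual-norm estimate, which is where the weights $|\tau|^{1-2H}$ and $|\rho|^{N-\beta-3}$ are actually spent.
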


\begin{proof}
Take $r_0 = a/2$. For each $w \in \mathbb{S}^{N-1}$, let
\[ r(w) = \min_{1 \le j \le n} |(t-t^j) + (x - x^j) \cdot w|. \]
Since $u$ is Gaussian, the conditional variance
$\mathrm{Var}(u_1(t, x)|u_1(t^1, x^1), \dots, u_1(t^n, x^n))$ is the squared distance 
between $u(t, x)$ and the linear subspace spanned by 
$u_1(t^1, x^1), \dots, u_1(t^n, x^n)$ in $L^2(\P)$.
Thus, it suffices to prove that there exist constants $C> 0$ and $r_0 > 0$ such that 
for any $n \ge 1$,
for any $(t, x), (t^1, x^1), \dots, (t^n, x^n) \in [a, \infty) \times \R^N$
with $\max_j(|t-t^j| + |x-x^j|) \le r_0$,
and for any choice of real numbers $a_1, \dots, a_n$, we have
\begin{equation}\label{Eq:LND}
\E\bigg[\Big(u_1(t, x) - \sum_{j=1}^n a_j u_1(t^j, x^j)\Big)^2\bigg]
\ge C \int_{\mathbb{S}^{N-1}} r(w)^{2H+1-N+\beta} \,\sigma(dw).
\end{equation}
To this end, we note that by \eqref{h} and \eqref{HPF},
\begin{equation*}
\begin{split}
&\E\bigg[\Big(u_1(t, x) - \sum_{j=1}^n a_j u_1(t^j, x^j)\Big)^2\bigg]\\
& \quad \ge C \int_\R d\tau \int_{\R^N} d\xi \,\Big|\mathscr{F}g_{t, x}(\tau, \xi) 
- \sum_{j=1}^n a_j \mathscr{F}g_{t^j, x^j}(\tau, \xi)\Big|^2 
|\tau|^{1-2H} |\xi|^{-\beta}.
\end{split}
\end{equation*}
Then, we use \eqref{Fg} and spherical coordinates $\xi = \rho w$ to get
\begin{align*}
C \int_\R d\tau \int_{\R_+} d\rho\,\int_{\mathbb{S}^{N-1}} \sigma(dw) 
\Big| F(t, x \cdot w, \tau, \rho) - 
\sum_{j=1}^n a_j  F(t^j, x^j \cdot w, \tau, \rho)\Big|^2 
|\tau|^{1-2H} \rho^{N-\beta - 3},
\end{align*}
where
\[  F(t, y, \tau, \rho) 
= \frac{e^{-i \rho y}}{2}
\left( \frac{e^{-it\tau} - e^{it\rho}}{\tau+\rho} - \frac{e^{-it\tau}-e^{-it\rho}}{\tau - \rho} \right). 
\]
Since $ F(t, x \cdot w, -\tau, -\rho) =- \overline{ F(t, x \cdot w, \tau, \rho)}$,
it follows that
\begin{equation}
\begin{split}\label{E2}
&\E\bigg[\Big(u_1(t, x) - \sum_{j=1}^n a_j u_1(t^j, x^j)\Big)^2\bigg]\\
& \ge \frac{C}{2} \int_{\mathbb{S}^{N-1}} \sigma(dw) \underbrace{\int_\R d\tau \int_\R d\rho\,
\Big| F(t, x \cdot w, \tau, \rho) - 
\sum_{j=1}^n a_j  F(t^j, x^j \cdot w, \tau, \rho)\Big|^2 
|\tau|^{1-2H} |\rho|^{N-\beta-3}}_{=:A(w)}.
\end{split}
\end{equation}

Choose and fix any two nonnegative smooth test functions $\phi$, $\psi: \R \to \R$ 
satisfying the following properties:
$\phi$ is supported on $[0, a/2]$ and $\int \phi(s) ds = 1$;
$\psi$ is supported on $[-1, 1]$ and $\psi(0) = 1$.
Let $\psi_r(x) = r^{-1} \psi(r^{-1} x)$.
For each $w \in \mathbb{S}^{N-1}$ with $r(w) > 0$, 
write $\widehat{\psi}_{r(w)} = \mathscr{F}(\psi_{r(w)})$ and consider
\[
I(w) := \int_\R d\rho \int_\R d\tau \, \overline{\bigg( F(t, x \cdot w, \tau, \rho) - 
\sum_{j=1}^n a_j F(t^j, x^j \cdot w, \tau, \rho)\bigg)}\,
e^{-it\rho}e^{-i\rho x\cdot w}\widehat\phi(\tau-\rho) \widehat\psi_{r(w)}(\rho).
\]
Note that for $\rho$ fixed,
$\tau \mapsto \widehat\phi(\tau-\rho)$ is the Fourier transform of
$s \mapsto e^{is\rho} \phi(s)$, 
and $\tau \mapsto F(t, x\cdot w, \tau, \rho)$ is the Fourier transform of
$s \mapsto e^{-i\rho x\cdot w}\sin((t-s)\rho) {\bf 1}_{[0, t]}(s)$.
Then apply the Plancherel theorem to the integral in $\tau$ to get that
\begin{align*}
I(w) 
&= 2\pi \int_\R d\rho \int_\R ds \, \overline{\bigg( e^{-i\rho x\cdot w}\sin((t-s)\rho) 
{\bf 1}_{[0, t]}(s) - 
\sum_{j=1}^n a_j e^{-i\rho x^j \cdot w} \sin((t^j-s)\rho) {\bf 1}_{[0, t^j]}(s) \bigg)}\\
& \hspace{255pt}\times e^{-i(t-s)\rho} e^{-i\rho x\cdot w}\phi(s) \widehat\psi_{r(w)}(\rho).
\end{align*}
Since $\sin(z) =\frac{1}{2i} (e^{iz} - e^{-iz})$ and $\phi$ is supported on $[0, a/2]$,
this is
\begin{align*}
&= -\pi i
\int_0^{a/2} ds \int_\R d\rho\, 
\bigg[ e^{i\rho x\cdot w}\Big(e^{i(t-s)\rho} - e^{-i(t-s)\rho}\Big) \\
& \hspace{80pt} - 
\sum_{j=1}^n a_j e^{i\rho x^j \cdot w} \Big(e^{i(t^j-s)\rho} - e^{-i(t^j-s)\rho}\Big)\bigg]
e^{-i(t-s)\rho} e^{-i\rho x\cdot w}\phi(s) \widehat\psi_{r(w)}(\rho).
\end{align*}
Then, apply the Fourier inversion theorem to 
$\widehat\psi_{r(w)} = \mathscr{F}(\psi_{r(w)})$ to get
\begin{align*}
&= -2\pi^2 i
\int_0^{a/2} \phi(s) \bigg[ \psi_{r(w)}(0) - \psi_{r(w)}(-2(t-s)) \\
& \qquad - \sum_{j=1}^n a_j \Big(\psi_{r(w)}((x^j-x)\cdot w + (t^j-t)) - 
\psi_{r(w)}((x^j-x)\cdot w + (t^j-t) - 2(t^j-s))\Big)\bigg] ds.
\end{align*}
Since $t \ge a$ and $r(w) \le r_0 = a/2$, we see that 
for all $s \in [0, a/2]$, $2(t-s)/r(w) \ge 2$ and thus
\[ \psi_{r(w)}(-2(t-s)) = 0. \]
By the definition of $r(w)$, we have $|(x^j-x)\cdot w + (t^j - t)|/r(w) \ge 1$, which implies
\[ \psi_{r(w)}((x^j-x)\cdot w + (t^j-t)) = 0. \]
Moreover, we have $(x^j-x)\cdot w + (t^j-t) - 2(t^j-s) \le r_0 - a = -a/2 \le -r(w)$, 
hence
\[ \psi_{r(w)}((x^j-x)\cdot w + (t^j-t) - 2(t^j-s)) = 0. \]
It follows that
\begin{equation}\label{I}
|I(w)| = 2\pi^2 \psi_{r(w)}(0) \int_0^{a/2} \phi(s)\, ds = 2\pi^2 r(w)^{-1}.
\end{equation}

On the other hand, by the Cauchy--Schwarz inequality,
\begin{align}\label{LND:int}
|I(w)|^2 \le A(w) \times \int_\R d\tau \int_\R d\rho \,
|\widehat\phi(\tau-\rho)|^2  |\widehat\psi_{r(w)}(\rho)|^2 |\tau|^{2H-1} |\rho|^{3-N+\beta}.
\end{align}
Note that $\widehat\psi_{r(w)}(\rho) = \widehat\psi(r(w)\rho)$ and
both $\widehat \phi$ and $\widehat \psi$ are rapidly decreasing functions.
To estimate the double integral in \eqref{LND:int}, 
we consider two regions: (i) $|\tau|\le |\rho|$ and 
(ii) $|\tau| > |\rho|$.
For region (i), by $|\tau|^{2H-1} \le |\rho|^{2H-1}$ and 
scaling in $\rho$, we have
\begin{align*}
&\int_\R d\rho \int_{|\tau| \le |\rho|} d\tau \, 
|\widehat\phi(\tau-\rho)|^2  |\widehat\psi_{r(w)}(\rho)|^2 |\tau|^{2H-1} |\rho|^{3-N+\beta}\\
& \le \int_\R d\rho\, |\widehat\psi(r(w)\rho)|^2 |\rho|^{2H+2-N+\beta} 
\int_\R d\tau \,|\widehat\phi(\tau)|^2\\
& = C r(w)^{-2H-3+N-\beta}.
\end{align*}
For region (ii), note that $3-N+\beta > 2H+1-N+\beta > 0$ by \eqref{beta},
so $|\rho|^{3-N+\beta} \le |\tau|^{3-N+\beta}$.
By letting $z = \tau - \rho$ and then by scaling,
\begin{align*}
&\int_\R d\rho \int_{|\tau| > |\rho|} d\tau \, 
|\widehat\phi(\tau-\rho)|^2  |\widehat\psi_{r(w)}(\rho)|^2 |\tau|^{2H-1} |\rho|^{3-N+\beta}\\
& \le \int_\R d\rho\, |\widehat\psi(r(w)\rho)|^2 
\int_\R dz \,|\widehat\phi(z)|^2 |z + \rho|^{2H+2-N+\beta} \\
& \le C \int_\R d\rho \, |\widehat\psi(r(w)\rho)|^2 
\int_\R dz\, |\widehat\phi(z)|^2 |z|^{2H+2-N+\beta} 
+ C \int_\R d\rho \, |\widehat\psi(r(w)\rho)|^2  |\rho|^{2H+2-N+\beta}
\int_\R dz\, |\widehat\phi(z)|^2\\
& \le C r(w)^{-1} + C r(w)^{-2H-3+N-\beta},
\end{align*}
which is $\le C r(w)^{-2H-3+N-\beta}$ for some larger constant $C$ 
because $2H+3-N+\beta > 1$.
Hence
\begin{equation}\label{I^2}
|I(w)|^2 \le C A(w) r(w)^{-2H-3+N-\beta}.
\end{equation}

Now, combining \eqref{I} and \eqref{I^2}, we get that 
\begin{equation}\label{A}
A(w) \ge C r(w)^{2H+1-N+\beta},
\end{equation}
and this remains true if $r(w) = 0$.
Therefore, we can integrate both sides of \eqref{A} over $\mathbb{S}^{N-1}$ 
with respect to $\sigma(dw)$ and use \eqref{E2} to get \eqref{Eq:LND}.
\end{proof}

When $N = 1$, $\sigma$ is supported on $\{-1, 1\}$.
In this case, $u(t, x)$ satisfies sectorial LND
under the change of coordinates $(t, x) \mapsto (t+x, t-x)$.
It is known that the Brownian sheet and fractional Brownian sheets 
satisfy the sectorial LND property \cite{KX04, WX07}.

\begin{corollary}\label{cor:sectLND}
When $N = 1$, \eqref{LND:ineq} becomes
\begin{equation*}
\begin{split}
&\mathrm{Var}(u_1(t, x)|u_1(t^1, x^1), \dots, u_1(t^n, x^n)) \\
& \qquad \ge C \Big( \min_{1 \le j \le n}|(t+x) - (t^j+x^j)|^{2\alpha} 
+ \min_{1\le j \le n} |(t-x) - (t^j-x^j)|^{2\alpha} \Big).
\end{split}
\end{equation*}
\end{corollary}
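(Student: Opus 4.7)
The plan is to obtain this corollary as a direct specialization of Proposition \ref{prop:LND} to the case $N=1$. The key observation is that the unit sphere $\mathbb{S}^{0}$ consists of the two points $\{-1, +1\}$, and the surface measure $\sigma$ is simply counting measure on these two points (up to an inessential positive constant which can be absorbed into the constant $C$ in \eqref{LND:ineq}).

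First I would apply Proposition \ref{prop:LND} with $N=1$ to the points $(t,x), (t^1,x^1), \dots, (t^n,x^n)$, so that the conditional variance is bounded below by
\[
C \int_{\mathbb{S}^{0}} \min_{1\le j\le n}\bigl|(t-t^j) + (x-x^j)w\bigr|^{2\alpha}\,\sigma(dw).
\]
Next I would evaluate this integral explicitly. Replacing the integral by the sum over $w\in\{-1,+1\}$ gives two contributions. For $w=+1$, the expression inside the minimum simplifies to $(t-t^j)+(x-x^j) = (t+x)-(t^j+x^j)$, while for $w=-1$ it simplifies to $(t-t^j)-(x-x^j) = (t-x)-(t^j-x^j)$. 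Taking the minimum over $j$ inside each term (which is unaffected by the change of linear coordinates) and adjusting the constant $C$ appropriately then yields precisely the asserted inequality.

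There is no real obstacle here; the statement is essentially the pointwise form that the spherical integral in \eqref{LND:ineq} collapses to when the sphere is zero-dimensional. The only minor point worth noting is the interpretation in terms of sectorial LND under the rotated coordinates $(t,x)\mapsto(t+x,t-x)$, which, as mentioned in the remark preceding the corollary, matches the known sectorial LND form for Brownian and fractional Brownian sheets; this is an interpretive comment rather than part of the proof itself.
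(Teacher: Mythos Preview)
Your proposal is correct and matches the paper's approach: the corollary is stated without proof precisely because it is the immediate specialization of Proposition~\ref{prop:LND} to $N=1$, where $\mathbb{S}^0=\{-1,+1\}$ and the spherical integral reduces to the two-term sum you describe. There is nothing to add.
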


Moreover, $u(t, x)$ satisfies the strong LND property in one variable ($t$ or $x$)
while the other variable is held fixed.

\begin{corollary}\label{cor:LND}
Fix $x_0 \in \R^N$.
For any $0 < a < \infty$, there exist constants $C > 0$ and $r_0 > 0$ 
such that for all integers $n \ge 1$, for all $t, t^1, \dots, t^n \in [a, \infty)$ with 
$\max_j |t-t^j| \le r_0$, we have
\begin{equation*}
\mathrm{Var}(u_1(t, x_0) | u_1(t^1, x_0), \dots, u_1(t^n, x_0))
\ge C \min_{1 \le j \le n} |t-t^j|^{2\alpha}.
\end{equation*}
\end{corollary}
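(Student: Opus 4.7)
The plan is to derive the corollary as a direct specialization of Proposition \ref{prop:LND}, by taking all spatial arguments equal to the fixed point $x_0$. That is, I would apply the inequality \eqref{LND:ineq} with the points $(t, x_0), (t^1, x_0), \dots, (t^n, x_0) \in [a, \infty) \times \R^N$. Because $x - x^j = x_0 - x_0 = 0$, the hypothesis $\max_j(|t-t^j| + |x-x^j|) \le r_0$ of the proposition becomes exactly $\max_j |t - t^j| \le r_0$, which matches the hypothesis of the corollary.

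With this choice, for every $w \in \mathbb{S}^{N-1}$ the expression inside the spherical integral collapses to
\[
\min_{1 \le j \le n} |(t - t^j) + (x_0 - x_0) \cdot w|^{2\alpha} = \min_{1 \le j \le n} |t - t^j|^{2\alpha},
\]
which is independent of $w$. Consequently, the right-hand side of \eqref{LND:ineq} becomes
\[
C \int_{\mathbb{S}^{N-1}} \min_{1 \le j \le n} |t - t^j|^{2\alpha} \, \sigma(dw)
= C\, \sigma(\mathbb{S}^{N-1}) \min_{1 \le j \le n} |t - t^j|^{2\alpha}.
\]
Since $\sigma(\mathbb{S}^{N-1})$ is a positive finite constant depending only on $N$ (and equals $2$ in the case $N = 1$), absorbing it into $C$ yields the claimed bound.

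There is no serious obstacle here: the corollary is an immediate consequence of Proposition \ref{prop:LND}, and the only substantive observation is that fixing the spatial variable eliminates the $w$-dependence of the spherical integrand, so the integral reduces to a constant multiple of the one-dimensional minimum. All the real analytic work — in particular the Fourier-analytic lower bound and the Plancherel/Cauchy--Schwarz manipulation that produced \eqref{LND:ineq} — is already contained in the proof of the preceding proposition.
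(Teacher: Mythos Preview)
Your proposal is correct and matches the paper's intent: the corollary is stated without proof as an immediate specialization of Proposition~\ref{prop:LND}, and your argument---setting all spatial points equal to $x_0$ so the integrand becomes independent of $w$---is exactly how one reads off the result.
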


\begin{proposition}\label{prop:LNDx}
Fix $t_0 > 0$. Then $\{u_1(t_0, x) : x \in \R^N \}$ is a stationary Gaussian random field
with a spectral density $f_{t_0}(\xi) = N_{t_0}(\xi) h_\beta(\xi)$, $\xi \in \R^N$, where
\[ N_{t_0}(\xi) = C |\xi|^{-2} \int_\R \bigg| \frac{e^{-it_0 \tau} - e^{it_0|\xi|}}{\tau+|\xi|} - \frac{e^{-it_0\tau} - e^{-it_0|\xi|}}{\tau-|\xi|} \bigg|^2 |\tau|^{1-2H} \, d\tau.\]
Moreover, for any $0 < M < \infty$, there exists a constant $C > 0$ such that 
for all integers $n \ge 1$, for all $x, x^1, \dots, x^n \in \{ y \in \R^N : |y| \le M \}$,
\begin{equation}\label{LNDx}
\mathrm{Var}(u_1(t_0, x) | u_1(t_0, x^1), \dots, u_1(t_0, x^n)) 
\ge C \min_{1 \le j \le n} |x - x^j|^{2\alpha}.
\end{equation}
\end{proposition}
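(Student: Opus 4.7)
The plan is to split the proof into two parts: first verify stationarity and identify the spectral density, then deduce the LND bound \eqref{LNDx} by a Fourier-analytic Cauchy--Schwarz argument in the spectral domain.

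For stationarity, I observe from \eqref{Fg} that $\mathscr{F}g_{t_0,x}(\tau,\xi)=e^{-ix\cdot\xi}\,\Phi(\tau,\xi)$, where
\[
\Phi(\tau,\xi)=\frac{1}{2|\xi|}\!\left(\frac{e^{-it_0\tau}-e^{it_0|\xi|}}{\tau+|\xi|}-\frac{e^{-it_0\tau}-e^{-it_0|\xi|}}{\tau-|\xi|}\right)
\]
is independent of $x$. Substituting into \eqref{HPF} with $\varphi=g_{t_0,x}$, $\psi=g_{t_0,y}$ and performing the $\tau$-integral first,
\[
\E[u_1(t_0,x)u_1(t_0,y)]=C\int_{\R^N}e^{-i(x-y)\cdot\xi}\left[\int_\R|\Phi(\tau,\xi)|^2|\tau|^{1-2H}\,d\tau\right]h_\beta(\xi)\,d\xi,
\]
which depends only on $x-y$. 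The bracketed factor equals $N_{t_0}(\xi)$ up to a constant, so $\{u_1(t_0,x)\}_{x\in\R^N}$ is stationary Gaussian with spectral density $f_{t_0}(\xi)=N_{t_0}(\xi)h_\beta(\xi)$ (after absorbing constants into $C$).

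For the LND bound, the spectral representation gives
\[
\E\!\left[\!\left(u_1(t_0,x)-\sum_{j=1}^n a_j u_1(t_0,x^j)\right)^{\!\!2}\right]=\int_{\R^N}\bigg|1-\sum_{j=1}^n a_j e^{-i(x^j-x)\cdot\xi}\bigg|^2 f_{t_0}(\xi)\,d\xi.
\]
Set $r=\min_j|x-x^j|$, which we may assume positive (otherwise \eqref{LNDx} is trivial). Fix an even $\phi\in C_c^\infty(\R^N)$ with $\phi(0)=1$ and $\mathrm{supp}\,\phi\subset\{|y|<1\}$, and put $\phi_r(y)=\phi(y/r)$; then $\phi_r(x-x^j)=0$ for every $j$, so Fourier inversion yields
\[
(2\pi)^N=\int_{\R^N}\bigg(1-\sum_{j=1}^n a_j e^{-i(x^j-x)\cdot\xi}\bigg)\widehat{\phi_r}(\xi)\,d\xi.
\]
By Cauchy--Schwarz,
\[
\E\!\left[\!\left(u_1(t_0,x)-\sum_{j=1}^n a_j u_1(t_0,x^j)\right)^{\!\!2}\right]\ge\frac{(2\pi)^{2N}}{\int_{\R^N}|\widehat{\phi_r}(\xi)|^2 f_{t_0}(\xi)^{-1}\,d\xi}.
\]
Substituting $\eta=r\xi$ and exploiting rapid decay of $\widehat\phi$ bounds the denominator by $Cr^{-2\alpha}$, provided $f_{t_0}(\xi)\ge c|\xi|^{-(N+2\alpha)}$ for all $|\xi|$ large (and $f_{t_0}^{-1}$ is locally integrable elsewhere, which follows from $h_\beta\ge C_1|\xi|^{-\beta}$ together with continuity and positivity of $N_{t_0}$, including $N_{t_0}(0)>0$). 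This yields \eqref{LNDx}.

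The main obstacle is the asymptotic lower bound $N_{t_0}(\xi)\asymp|\xi|^{-(2H+1)}$ as $|\xi|\to\infty$. Since $\Phi(\cdot,\xi)$ is, up to a factor $|\xi|^{-1}$, the Fourier transform in $s$ of $s\mapsto\sin((t_0-s)|\xi|)\mathbf{1}_{[0,t_0]}(s)$, I apply the Parseval-type identity
\[
\int_\R|\widehat F(\tau)|^2|\tau|^{1-2H}\,d\tau=c_H\int_\R\!\int_\R F(s)F(s')|s-s'|^{2H-2}\,ds\,ds'
\]
(read as $\int F^2$ when $H=1/2$), then rescale $u=(t_0-s)|\xi|$ to reduce the claim to showing $\int_0^R\!\int_0^R\sin(u)\sin(u')|u-u'|^{2H-2}\,du\,du'\asymp R$ as $R=t_0|\xi|\to\infty$. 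Writing $2\sin u\sin u'=\cos(u-u')-\cos(u+u')$ and letting $w=u-u'$, the diagonal contribution gives linear growth in $R$ through the finite positive constant $\int_\R\cos(w)|w|^{2H-2}\,dw$, while the $\cos(u+u')$ term is subdominant by oscillation.
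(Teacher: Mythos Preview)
Your proof is correct and takes a genuinely different route from the paper's for the LND bound (the stationarity part is essentially identical).

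The paper does not compute the large-$|\xi|$ asymptotic of $N_{t_0}$; instead it invokes Lemma~6.2 of \cite{B12}, which gives the pointwise lower bound
\[
N_{t_0}(\xi)\ge\frac{C}{\sqrt{|\xi|^2+1}}\int_\R\frac{|\tau|^{1-2H}}{|\tau|^2+|\xi|^2+1}\,d\tau.
\]
Because this bound retains a $\tau$-integral, the paper introduces \emph{two} bump functions, one on $\R$ for $\tau$ and one on $\R^N$ for $\xi$, and runs the Cauchy--Schwarz argument in $(1+N)$-dimensional frequency space; the scaling then produces $r^{2H+1-N+\beta}=r^{2\alpha}$. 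You instead work entirely in the $N$-dimensional spatial spectrum: you establish $N_{t_0}(\xi)\asymp|\xi|^{-(2H+1)}$ directly via the Parseval identity and the substitution $u=(t_0-s)|\xi|$, and then apply Cauchy--Schwarz with a single bump function on $\R^N$. Your approach is self-contained (no external reference) and structurally simpler, at the price of the oscillatory-integral analysis for $I(R)=\int_0^R\!\int_0^R\sin u\sin u'|u-u'|^{2H-2}\,du\,du'$; this analysis is sound, since the diagonal term grows like $\tfrac12 R\int_\R\cos w\,|w|^{2H-2}\,dw>0$ while the $\cos(u+u')$ contribution is $O(R^{2H-1})=o(R)$ after one integration by parts. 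One small sharpening: for the region $|\xi|\le 1$ you in fact use that $f_{t_0}^{-1}$ is locally \emph{bounded} (not merely integrable), which follows from $N_{t_0}(0)>0$, continuity of $N_{t_0}$, and $h_\beta(\xi)^{-1}\le C_1^{-1}|\xi|^\beta$; this gives a contribution $O(r^{2N})\le C r^{-2\alpha}$ since $r\le 2M$.
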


\begin{proof}
By \eqref{HPF} and \eqref{Fg}, we have $\E[u_1(t_0, x) u_1(t_0, y)]
= \int_{\R^n} e^{-i (x - y) \cdot \xi} N_{t_0}(\xi) h_\beta(\xi)\, d\xi$, 
which verifies the first assertion. 
To prove \eqref{LNDx}, use \eqref{HPF}, \eqref{h} and \eqref{Fg} to get that, 
for any $a_1, \dots, a_n \in \R$,
\begin{align*}
\E\bigg[ \Big( u_1(t_0, x) - \sum_{j=1}^n a_j u_1(t_0, x^j) \Big)^2 \bigg]
\ge C \int_{\R^N} \Big| 1 - \sum_{j=1}^n a_j e^{i(x-x^j)\cdot \xi} \Big|^2 
N_{t_0}(\xi) |\xi|^{-\beta} \, d\xi.
\end{align*}
By Lemma 6.2 of \cite{B12}, there exists a constant $C>0$ depending on $t_0$ such that for all $\xi \in \R^N$,
\[ N_{t_0}(\xi) \ge \frac{C}{\sqrt{|\xi|^2 + 1}} \int_\R \frac{|\tau|^{1-2H}}{|\tau|^2 + |\xi|^2 + 1} d\tau. \]
Fix any two nonnegative smooth test functions $\phi: \R \to \R$ and $\psi: \R^N \to \R$
satisfying the following properties: $\phi$ is supported on $[-1, 1]$, 
$\psi$ is supported on $\{ \xi \in \R^N : |\xi| \le 1 \}$, and $\phi(0) = \psi(0) = 1$.
Let $r = \min_j |x - x^j|$, $\phi_r(\tau) = r^{-1}\phi(r^{-1}\tau)$, 
$\psi_r(\xi) = r^{-N}\psi(r^{-1}\xi)$ and consider
\[ I := \iint_{\R \times \R^N} \Big( 1 - \sum_{j=1}^n a_j e^{i(x-x^j)\cdot \xi} \Big) \widehat{\phi}_r(\tau) \widehat{\psi}_r(\xi)\, d\tau \, d\xi. \]
By Fourier inversion, 
\begin{equation}\label{LNDx:I}
I = (2\pi)^{1+N} \phi_r(0) \Big(\psi_r(0) - \sum_{j=1}^n a_j \psi_r(x-x^j)\Big)
= (2\pi)^{1+N} r^{-1-N}.
\end{equation}
On the other hand, by the Cauchy--Schwarz inequality,
\begin{align*}
I^2 & \le C\, \E\bigg[ \Big( u_1(t_0, x) - \sum_{j=1}^n a_j u_1(t_0, x^j) \Big)^2 \bigg]\\
& \quad \times \iint_{\R \times \R^N} \sqrt{|\xi|^2 + 1}\, (|\tau|^2 + |\xi|^2 + 1) 
|\tau|^{2H-1} |\xi|^\beta |\widehat{\phi}(r\tau) \widehat{\psi}(r\xi)|^2\, d\tau\, d\xi.
\end{align*}
By scaling, the double integral is equal to
\begin{align*}
r^{-2H-3-N-\beta} \iint_{\R \times \R^N} \sqrt{|\xi|^2 + r^2}\, (|\tau|^2 + |\xi|^2 + r^2)
|\tau|^{2H-1} |\xi|^\beta |\widehat{\phi}(\tau) \widehat{\psi}(\xi)|^2 \, d\tau\, d\xi,
\end{align*}
which is $\le C r^{-2H-3-N-\beta}$ by applying $r \le 2M$ to the integrand. 
This and \eqref{LNDx:I} imply that
\[\E\bigg[ \Big( u_1(t_0, x) - \sum_{j=1}^n a_j u_1(t_0, x^j) \Big)^2 \bigg]
\ge C r^{2H+1-N+\beta}, \]
where $C$ does not depend on $n, x, x^j$ or $a_j$.
This proves \eqref{LNDx}.
\end{proof}

A property of the conditional variances 
$\mathrm{Var}(u_1(t, x)| u_1(t^1, x^1), \dots, u_1(t^n, x^n))$ 
is that they are strictly positive
whenever the points $(t^j, x^j)$ are all different from $(t, x)$.
Indeed, $u$ has the following linear independence property:

\begin{proposition}\label{lem:LI}
For any $n \ge 2$, for any distinct points
$(t^1, x^1), \dots, (t^n, x^n)$ in $(0, \infty) \times \R^N$, 
the Gaussian random variables 
$u_1(t^1, x^1), \dots, u_1(t^n, x^n)$ are linearly independent.
\end{proposition}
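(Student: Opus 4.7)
My plan is to reduce the claim, via the Wiener-type isometry $\varphi \mapsto W(\varphi)$, to linear independence of the kernels $g_{t^j, x^j}$ in the Hilbert space $\mathcal{HP}$, and then to peel off the points time-layer by time-layer using the fact that $g_{t, x}(s, \cdot)$ is supported in $s \in [0, t]$.

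First I would use that $\varphi \mapsto W(\varphi)$ is an isometry from $\mathcal{HP}$ into $L^2(\P)$: any linear relation $\sum_j a_j u_1(t^j, x^j) = 0$ in $L^2(\P)$ is equivalent to $\|\sum_j a_j g_{t^j, x^j}\|_{\mathcal{HP}} = 0$. By the spectral identity \eqref{HPF} together with the lower bound on $h_\beta$ in \eqref{h} and positivity of $|\tau|^{1-2H}$, this is in turn equivalent to
\[
\sum_{j=1}^n a_j \mathscr{F} g_{t^j, x^j}(\tau, \xi) = 0 \quad \text{for a.e.\ }(\tau, \xi) \in \R \times \R^N.
\]
Since $\mathscr{F} g_{t, x}(\tau, \xi)$ is the Fourier transform in $s$ of $e^{-ix \cdot \xi} |\xi|^{-1} \sin((t - s)|\xi|) \mathbf{1}_{[0, t]}(s)$ and the Fourier transform in $s$ is injective, the vanishing of the space-time transform is equivalent to the partial-Fourier identity
\[
h(s, \xi) := \sum_{j=1}^n a_j \, e^{-ix^j \cdot \xi} \, \frac{\sin((t^j - s)|\xi|)}{|\xi|} \, \mathbf{1}_{[0, t^j]}(s) = 0 \quad \text{for a.e.\ } (s, \xi).
\]

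The key step is to peel off the points with the largest time. After relabeling so that $t^1 \ge t^2 \ge \dots \ge t^n$, let $k$ be the multiplicity of the maximum, i.e.\ $t^1 = \dots = t^k$ and $t^j < t^1$ for $j > k$ (with the convention $t^{n+1} = 0$). For $s$ in the open strip $(\max_{j > k} t^j, t^1)$, every indicator $\mathbf{1}_{[0, t^j]}(s)$ with $j > k$ vanishes, and the identity $h(s, \xi) = 0$ collapses to
\[
\Big(\sum_{j=1}^k a_j e^{-ix^j \cdot \xi}\Big) \frac{\sin((t^1 - s)|\xi|)}{|\xi|} = 0.
\]
For each fixed $\xi \ne 0$, the map $s \mapsto \sin((t^1 - s)|\xi|)$ is real-analytic and not identically zero on this strip, so the bracketed sum must vanish for a.e.\ $\xi \in \R^N$. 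Since $\xi \mapsto \sum_j a_j e^{-ix^j \cdot \xi}$ is real-analytic, the equality then holds for every $\xi$, and distinctness of $x^1, \dots, x^k$ forces $a_1 = \dots = a_k = 0$ by the standard linear independence of exponentials with distinct frequencies (equivalently, Fourier-inverting $\sum_{j=1}^k a_j \delta_{x^j} = 0$).

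Once the top layer is eliminated, I would iterate the peeling argument on the remaining $n - k$ points, which have strictly smaller time coordinates; after finitely many steps every $a_j$ vanishes. The only genuinely delicate point is upgrading the almost-everywhere identities to pointwise ones, and this is handled cleanly by the analyticity of $s \mapsto \sin((t^1 - s)|\xi|)$ and of the exponential sums in $\xi$. Everything else reduces to routine bookkeeping around the spectral formula \eqref{HPF} and the time-support structure of $g_{t^j, x^j}$ inherited from the fundamental solution $G$.
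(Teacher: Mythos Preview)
Your proof is correct and takes a genuinely different route from the paper's. The paper works entirely in the full Fourier domain: it multiplies $\sum_j a_j \mathscr{F}g_{t^j,x^j}(\tau,\xi)=0$ by $2|\xi|(\tau^2-|\xi|^2)$ to clear denominators in the explicit formula \eqref{Fg}, obtaining an identity of the form $\sum_j b_j e^{-it^j\tau}+c_1\tau+c_2=0$, then differentiates in $\tau$ and invokes linear independence of the exponentials $\{e^{-i\hat{t}^\ell\tau},1\}$ to separate the distinct time values; within each time layer it then uses linear independence of spatial exponentials, exactly as you do. Your argument instead inverts the Fourier transform in the time-frequency variable to return to the partial transform $h(s,\xi)$ and exploits the support structure $\mathbf{1}_{[0,t^j]}(s)$ to peel off the time layers directly, from the largest $t^j$ downward. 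Both approaches arrive at the same endgame (distinct spatial points within a common time layer give linearly independent exponentials), but yours is arguably more transparent since it uses only the time-support of $g_{t,x}$ and basic analyticity, avoiding the algebra with the explicit rational expression \eqref{Fg} and the differentiation step; the paper's approach, on the other hand, never leaves the Fourier side and so does not need the a.e.-to-pointwise upgrades you handle via analyticity.
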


\begin{proof}
Suppose $a_1, \dots, a_n$ are real numbers such that
$\sum_{j=1}^n a_j u_1(t^j, x^j) = 0$ a.s. Then by \eqref{HPF},
\[
0 = \E\bigg(\sum_{j=1}^n a_j u_1(t^j, x^j) \bigg)^2
= \int_\R d\tau \int_{\R^N} d\xi \,\bigg| \sum_{j=1}^n
a_j \mathscr{F} g_{t^j, x^j}(\tau, \xi) \bigg|^2 |\tau|^{1-2H} h_\beta(\xi).
\]
It follows that for all $\tau \in \mathbb{R}$ and $\xi \in \mathbb{R}^N$,
$\sum_{j=1}^n a_j \mathscr{F}g_{t^j, x^j}(\tau, \xi) = 0$, which, 
by \eqref{Fg}, implies
\begin{equation}\label{LI:eq1}
\sum_{j=1}^n b_j e^{-i t^j \tau} + c_1 \tau + c_2 = 0,
\end{equation}
where $b_j = -2 a_j |\xi| e^{-i x^j \cdot \xi}$,
\begin{align*}
c_1 &= -\sum_{j=1}^n a_j e^{-i x^j \cdot \xi}(e^{it^j|\xi|} - e^{-it^j|\xi|}),\\
c_2 &= \sum_{j=1}^n a_j |\xi| e^{-i x^j \cdot \xi}(e^{it^j|\xi|} + e^{-it^j|\xi|}).
\end{align*}
We need to show that $a_j = 0$ for all $j = 1, \dots, n$.
Let $\hat{t}^1, \dots, \hat{t}^p$ be all distinct values of the $t^j$'s.
If we fix an arbitrary $\xi \in \mathbb{R}^N$ and differentiate \eqref{LI:eq1}
with respect to $\tau$, we see that for all $\tau \in \R$,
\[
\sum_{\ell = 1}^p \bigg(-i\hat{t}^\ell \sum_{j: t^j = \hat{t}^\ell} b_j\bigg)
e^{-i \hat{t}^\ell \tau} + c_1 = 0.
\]
Since the functions $\{e^{-i \hat{t}^1 \tau}, \dots,
e^{-i \hat{t}^p \tau}, 1 \}$ are linearly independent over $\mathbb{C}$, we have
\[ -i\hat{t}^\ell \sum_{j: t^j = \hat{t}^\ell} b_j= 0 \]
for all $\ell = 1, \dots, p$.
Since $\xi \in \R^N$ is arbitrary, this implies that 
\begin{equation}\label{SWE_sum}
 \sum_{j: t^j = \hat{t}^\ell} a_j e^{-ix^j\cdot \xi} = 0
\end{equation}
for all $\xi \in \mathbb{R}^N$ and all $\ell = 1, \dots, p$.
Since the points $(t^1, x^1), \dots, (t^n, x^n)$ are distinct, for any fixed $\ell$, 
the $x^j$'s that appear in the sum in \eqref{SWE_sum} are distinct from each other. 
By linear independence
of the functions $e^{-i x^j \cdot \xi}$, we conclude that $a_j = 0$ for all $j$.
\end{proof}

In fact, using the LND property of $u(t, x)$, we can obtain a stronger result 
which says more than the H\"older regularity of the sample functions, 
namely the exact uniform modulus of continuity:

\begin{theorem}\label{thm:MC}
Assume \eqref{beta}.
For any compact interval $I$ in $(0, \infty) \times \R^N$, there exists a constant
$0 < C < \infty$ such that
\begin{equation}\label{eq:MC}
\lim_{\eps \to 0} \sup_{\substack{(t, x), (s, y) \in I,\\0 < |t-s|+|x-y| \le \eps}} 
\frac{|u(t, x) - u(s, y)|}
{(|t-s|+|x-y|)^\alpha\sqrt{\log\big[1+(|t-s|+|x-y|)^{-1}\big]}} 
= C \quad \text{a.s.}
\end{equation}
\end{theorem}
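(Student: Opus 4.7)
The plan is to prove upper and lower bounds of the $\limsup$ in \eqref{eq:MC} by two separate arguments, and then invoke a zero-one law to promote the bounded $\limsup$ to an almost sure constant $C \in (0,\infty)$.

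\textbf{Upper bound.} The idea is to apply Gaussian chaining to the canonical metric $d((t,x),(s,y)) = (\E[|u_1(t,x)-u_1(s,y)|^2])^{1/2}$, which by Proposition \ref{prop:HR} is comparable to $(|t-s|+|x-y|)^\alpha$ on the compact set $I$. A Dudley-type entropy estimate combined with the Borell--TIS concentration inequality, applied at dyadic scales $\eps_n = 2^{-n}$ and followed by a Borel--Cantelli argument, produces a finite deterministic constant $C_2$ such that the $\limsup$ in \eqref{eq:MC} is at most $C_2$ almost surely. Only the variance estimate of Proposition \ref{prop:HR} enters at this step.

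\textbf{Lower bound.} This is the heart of the proof and the step where I expect the main obstacle. I would discretize $I$ at scale $r$ into $M_r \asymp r^{-(N+1)}$ grid points $z_k = (t_k, x_k)$, fix an increment vector $h_r = (\tau_r, \eta_r)$ with $|\tau_r|+|\eta_r| \asymp r$, and consider the Gaussian increments $Z_k := u_1(z_k + h_r) - u_1(z_k)$, each of variance of order $r^{2\alpha}$ by Proposition \ref{prop:HR}. Applying Proposition \ref{prop:LND} at the point $z_k + h_r$, conditioned on all the $u_1(z_j)$ and $u_1(z_j + h_r)$ with $j \neq k$, yields that $\mathrm{Var}(Z_k \mid \{Z_j\}_{j \neq k})$ is bounded below by a constant multiple of the spherical integral
\[
\int_{\mathbb{S}^{N-1}} \min_{j \neq k}\, \bigl\{\, |(t_k-t_j)+(x_k-x_j)\cdot w|^{2\alpha},\; |(t_k+\tau_r-t_j)+(x_k+\eta_r-x_j)\cdot w|^{2\alpha}\,\bigr\}\,\sigma(dw).
\]
The key technical point, and where I expect the difficulty to lie, is a purely deterministic geometric estimate: for grid points separated at scale $r$, the set of directions $w \in \mathbb{S}^{N-1}$ on which the above minimum is $\gtrsim r^{2\alpha}$ must have $\sigma$-measure bounded below uniformly in $k$ and $r$. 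A careful choice of the grid and of $h_r$ (so that the increment direction is not aligned with any of the ``bad'' hyperplanes for too many $w$) should make this possible. Once this conditional-variance bound $\gtrsim r^{2\alpha}$ is secured, a standard Gaussian decorrelation argument writes each $Z_k$ as its conditional expectation plus an independent centered Gaussian of variance $\gtrsim r^{2\alpha}$; the tail estimate for the maximum of $M_r$ such Gaussians then yields $\max_k |Z_k| \ge c_1\, r^\alpha \sqrt{\log M_r} \asymp r^\alpha \sqrt{\log(1/r)}$ with probability tending to $1$ fast enough along $r_n = 2^{-n}$ to apply Borel--Cantelli and obtain an almost sure lower bound $C_1 > 0$ on the $\limsup$ in \eqref{eq:MC}.

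\textbf{Identification of the limit as a constant.} The two bounds together give $0 < C_1 \le \limsup \le C_2 < \infty$ almost surely. Since the quantity in \eqref{eq:MC} depends only on arbitrarily small increments of $u$ on $I$, it is measurable with respect to the germ $\sigma$-field of $u$ on $I$, so a standard zero-one law for Gaussian random fields (analogous to the Kolmogorov zero-one law) shows that it is almost surely equal to a deterministic constant $C = C(I) \in [C_1, C_2]$. This promotes the $\limsup$ to a genuine limit and yields \eqref{eq:MC}.
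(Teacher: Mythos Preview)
Your proposal is correct and follows essentially the same route as the paper: the upper bound via Dudley-type entropy using Proposition \ref{prop:HR}, the lower bound via the strong LND of Proposition \ref{prop:LND}, and a Gaussian zero-one law to identify the limit as a deterministic constant. The paper simply reverses the order (zero-one law first, then finiteness, then positivity) and cites Theorem 3.1 of \cite{LX19} for the details of the lower-bound argument, which is precisely the grid-plus-conditional-variance scheme you outline.

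One small point: your invocation of a ``germ $\sigma$-field of $u$ on $I$'' is not quite the right object, since the quantity in \eqref{eq:MC} involves increments throughout all of $I$, not near a single point. The paper instead uses the Karhunen--Lo\`eve expansion $u = \sum_n \xi_n \phi_n$ together with Kolmogorov's zero-one law (cf.\ \cite[Lemma 7.1.1]{MR06}): changing finitely many $\xi_n$ perturbs $u$ by a smooth function on $I$ and hence does not affect the limit, so the limit lies in the tail $\sigma$-field of the i.i.d.\ Gaussians $(\xi_n)$. This is the standard way to make your zero-one step precise. Also note that the expression inside the limit is monotone non-increasing in $\eps$ (the supremum is over a shrinking set), so the limit automatically exists and equals the $\limsup$; this is why your final step indeed upgrades the $\limsup$ to a limit.
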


\begin{proof}
Using the Karhunen--Lo\`eve expansion of $u(t, x)$ and Kolmogorov's zero--one law, 
we can show that the limit \eqref{eq:MC} holds for some constant $0 \le C \le \infty$ 
(cf.\ Lemma 7.1.1 of \cite{MR06}).
Then, from \eqref{SWEvar}, we can use the standard metric entropy result 
for Gaussian modulus of continuity \cite{D73} to prove that this limit is finite,
and use Proposition \ref{prop:LND} to prove that it is also strictly positive.
The proof is similar to that of Theorem 3.1 of \cite{LX19} so we omit the details.
\end{proof}

\medskip
\section{Existence of jointly continuous local times}

The objective of this section is to establish the 
existence of jointly continuous local times for the solution of \eqref{SWE}.
Let us first recall a necessary and sufficient condition for the existence of 
square-integrable local times for general Gaussian random fields 
based on the Fourier analytic approach of Berman; see \cite{B69}, \cite[p.36]{GH80}.

Let $X = \{X(z) : z \in T\}$ be an $\R^d$-valued Gaussian random field on a compact 
interval $T \subset \R^k$.
The Fourier transform (or characteristic function) of the occupation measure $\nu_T$ 
of $X$ is
\[ \hat \nu_T(\xi) = \int_{\R^d} e^{i\xi \cdot v} \nu_T(dv) = \int_T e^{i\xi\cdot X(z)} \,dz. \]
By the Plancherel theorem, a necessary and sufficient condition for $X$ to have
a square-integrable local time on $T$, 
namely, $L(\cdot, T) \in L^2(\lambda_d \times \P)$, is
\begin{equation}\label{LT:L2cond}
\int_{\R^d} \int_T \int_T \E[e^{i\xi \cdot (X(z) - X(z'))}] \,dz\,dz'\,d\xi < \infty.
\end{equation}
The integral in \eqref{LT:L2cond} above is equal to 
$\E\int_{\R^d} |\hat \nu_T(\xi)|^2 d\xi$.
In particular, 
when \eqref{LT:L2cond} holds, a version of the local time can be obtained by the 
inverse $L^2$-Fourier transform of $\hat\nu_T$:
\begin{equation}\label{LT:L2}
L(v, T) \overset{L^2}{=} \lim_{M \to \infty} (2\pi)^{-d}
\int_{[-M, M]^d} e^{-i\xi\cdot v} \int_T e^{i\xi \cdot X(z)} dz\,d\xi.
\end{equation}
There are several ways to consider the local times of the stochastic wave equation \eqref{SWE}.
The solution $u(t, x)$ can be regarded as a process in $t$, in $x$, or in $(t, x)$.
Using \eqref{LT:L2cond} and \eqref{SWEvar}, 
we can easily derive the following necessary and sufficient 
conditions for the existence of square-integrable local times for $u$, 
for each of the three cases.

\begin{theorem}\label{thm:LT_E}
Assume \eqref{beta}.
Let $T_1 \subset (0, \infty)$ and $T_2 \subset \R^N$ be compact intervals and
$T = T_1 \times T_2$. 
\begin{enumerate}
\item[(i)] For any fixed $x_0 \in \R^N$,
$\{ u(t, x_0) : t \in T_1\}$ has a square-integrable local time $L^{x_0}(v, T_1)$
on $T_1$ if and only if $\alpha d < 1$.
\item[(ii)] For any fixed $t_0 > 0$,
$\{ u(t_0, x) : x \in T_2\}$ has a square-integrable local time $L_{t_0}(v, T_2)$ on $T_2$
if and only if $\alpha d < N$.
\item[(iii)] $\{ u(t, x) : (t, x) \in T\}$ has a square-integrable local time $L(v, T)$ on $T$
if and only if $\alpha d < 1+N$.
\end{enumerate}
\end{theorem}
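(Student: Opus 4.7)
The plan is to apply Berman's necessary-and-sufficient condition \eqref{LT:L2cond} in each case and then reduce the resulting integrability condition to a purely analytic one via the variance estimate of Proposition \ref{prop:HR}. Since $u = (u_1,\dots,u_d)$ has i.i.d.\ Gaussian components, for $z,z'$ in the parameter set,
\[
\E[e^{i\xi\cdot(u(z)-u(z'))}] = \exp\Bigl(-\tfrac{1}{2}|\xi|^2\,\sigma(z,z')^2\Bigr),
\qquad \sigma(z,z')^2 := \E[(u_1(z)-u_1(z'))^2].
\]
Integrating out $\xi$ over $\R^d$ gives $(2\pi)^{d/2}\sigma(z,z')^{-d}$, so \eqref{LT:L2cond} is equivalent to
\[
\int_T\int_T \sigma(z,z')^{-d}\,dz\,dz' < \infty.
\]

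Next I would use Proposition \ref{prop:HR}, which provides two-sided bounds
\[
C_1(|t-s|+|x-y|)^{2\alpha} \le \sigma((t,x),(s,y))^2 \le C_2(|t-s|+|x-y|)^{2\alpha}
\]
on any compact box contained in $(0,\infty)\times\R^N$. For case (i), fix $x=y=x_0$ so the bound collapses to $\sigma(t,s)\asymp|t-s|^{\alpha}$, and the question reduces to the finiteness of $\int_{T_1}\int_{T_1}|t-s|^{-\alpha d}\,dt\,ds$, which is standard and holds iff $\alpha d<1$. For case (ii), fix $t=s=t_0$ so $\sigma(x,y)\asymp|x-y|^\alpha$ on $T_2$, and the reduced integral $\int_{T_2}\int_{T_2}|x-y|^{-\alpha d}\,dx\,dy$ converges iff $\alpha d<N$ (this is the usual Riesz-type integral on $\R^N$). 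In both directions: the upper bound on $\sigma^2$ is used to show the Berman integral diverges when the exponent condition fails (necessity), and the lower bound is used to show it converges when the exponent condition holds (sufficiency).

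For case (iii), the reduced integral is
\[
\int_{T_1\times T_2}\int_{T_1\times T_2} (|t-s|+|x-y|)^{-\alpha d}\,dt\,dx\,ds\,dy.
\]
Setting $\tau = t-s$ and $\eta = x-y$ and bounding the remaining integrations over $T_1,T_2$ by their lengths, the question reduces to the finiteness of
\[
\int_{[-a,a]}\int_{[-b,b]^N} (|\tau|+|\eta|)^{-\alpha d}\,d\tau\,d\eta
\]
on a small box in $\R^{1+N}$. Comparing $|\tau|+|\eta|$ to the Euclidean norm on $\R^{1+N}$ and passing to polar coordinates, this integral is comparable to $\int_0^R r^{N-\alpha d}\,dr$, which is finite iff $\alpha d<1+N$. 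Both bounds from Proposition \ref{prop:HR} are again used to get the strict necessity and sufficiency.

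The only mildly non-trivial step is the reduction for case (iii); everything else is essentially bookkeeping with Berman's criterion together with Proposition \ref{prop:HR}. There is no real obstacle, but care is needed to ensure that the compact parameter set stays inside $(0,\infty)\times\R^N$ (so that Proposition \ref{prop:HR} applies with uniform constants) and that the same constants work in both directions of the iff.
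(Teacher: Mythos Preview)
Your proposal is correct and follows precisely the route the paper indicates: the paper does not spell out a proof but simply says the result follows ``using \eqref{LT:L2cond} and \eqref{SWEvar},'' i.e., Berman's $L^2$-criterion together with the two-sided variance bounds of Proposition~\ref{prop:HR}. Your write-up fills in exactly those details (Gaussian i.i.d.\ components, integrating out $\xi$, and the resulting Riesz-type integrability test in dimensions $1$, $N$, and $1+N$), so there is nothing to add.
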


By Corollary \ref{cor:LND} and Proposition \ref{prop:LNDx}, 
$u$ satisfies the strong LND property (in the sense of Berman or Pitt) 
in one variable $t$ or $x$ when the other variable is held fixed.
Therefore, if the conditions in (i) and (ii) above hold, 
then the joint continuity and H\"older conditions of the local times follow from
the standard results of \cite{B73, P78, GH80}.
For case (iii), when $N = 1$, $u(t, x)$ satisfies sectorial LND by Corollary 
\ref{cor:sectLND}, so the results of \cite{WX11} can be applied;
otherwise, $u$ satisfies a different type of strong LND which takes an integral form 
by Proposition \ref{prop:LND},
so the standard results of \cite{B73, P78, GH80, X97, WX11} cannot be directly applied.
It can be seen from \eqref{Def:LT} that if $\alpha d < 1$, then
\begin{equation*}
L(v, T) = \int_{T_2} L^x(v, T_1)\, dx \quad \text{a.e. } v,
\end{equation*}
and if $\alpha d < N$, then
\begin{equation*}
L(v, T) = \int_{T_1} L_t(v, T_2)\, dt \quad \text{a.e. } v.
\end{equation*}
While these relations may allow us to deduce regularity of $L(v, T)$ from 
that of $L_t(v, T_2)$ or $L^x(v, T_1)$, 
they are not accessible when $N \le \alpha d < 1+N$.

The main result of this section is Theorem \ref{thm:JC}, which establishes 
joint continuity of the local times of $u$, particularly for case (iii) above.
Our approach is to directly exploit the spherical LND property 
in Proposition \ref{prop:LND} to obtain moment estimates for the local times.

\begin{lemma}\label{lem1}
Let $p > 0$ and $T$ be a compact interval in $(0, \infty) \times \R^N$.
If $\alpha p < 1+N$, then there exists a constant $C < \infty$ such that 
for all intervals $I$ in $T$,
$n \ge 1$ and $(t^1, x^1), \dots, (t^n, x^n) \in I$, 
\begin{equation*}\label{lem1eq}
\qquad \int_{I} dt \,dx \left[ \int_{\mathbb{S}^{N-1}} 
\min_{1\le i \le n} |(t + x\cdot w) - (t^i + x^i \cdot w)|^{2\alpha} \sigma(dw) \right]^{-\frac p 2}
\le C n^{\alpha p} [\lambda_{1+N}(I)]^{1-\frac{\alpha p}{1+N}}.
\end{equation*}
\end{lemma}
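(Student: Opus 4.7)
The plan is to bound $\Phi^{-p/2}$ pointwise by a spherical average of $(\min_i|\cdot|)^{-\alpha p}$ via Jensen's inequality, reduce the resulting $(1+N)$-dimensional integral over $I$ to a one-dimensional minimum-distance integral by Fubini and a linear change of variables, and then invoke the classical 1D Voronoi estimate.

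Write $\Phi(t,x) := \int_{\mathbb{S}^{N-1}} \min_{1\le i\le n} |(t-t^i) + (x-x^i)\cdot w|^{2\alpha}\, \sigma(dw)$. Since $u \mapsto u^{-p/2}$ is convex on $(0,\infty)$, Jensen's inequality applied to the probability measure $\sigma/\sigma(\mathbb{S}^{N-1})$, together with the identity $(\min_i a_i^{2\alpha})^{-p/2} = (\min_i a_i)^{-\alpha p}$, gives the pointwise estimate
\[ \Phi(t,x)^{-p/2} \le C \int_{\mathbb{S}^{N-1}} \Big(\min_{1\le i\le n} |(t-t^i) + (x-x^i)\cdot w|\Big)^{-\alpha p}\, \sigma(dw). \]
Integrating over $(t,x) \in I$ and swapping the order of integration by Fubini, for each fixed $w$ I change coordinates to $(y, z) \in \R \times \R^N$ with $y = t + x\cdot w$ and $z$ the coordinates perpendicular to $(1,w) \in \R^{1+N}$. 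The Jacobian is a constant; if $L := \lambda_{1+N}(I)^{1/(1+N)}$, then the image of $I$ projects onto an interval $J_w$ of length $O(L)$ in $y$ and into a set of $N$-dimensional measure $O(L^N)$ in $z$. Hence, with $y_w^i := t^i + x^i\cdot w$, the inner integral is bounded by $CL^N \int_{J_w} (\min_i |y-y_w^i|)^{-\alpha p}\, dy$.

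The elementary one-dimensional Voronoi estimate
\[ \int_J \Big(\min_{1\le i\le n} |y-a_i|\Big)^{-\gamma}\, dy \le C n^\gamma \lambda_1(J)^{1-\gamma} \qquad (0<\gamma<1), \]
proved by splitting $J$ into Voronoi cells around each $a_i$ and summing, applied with $\gamma = \alpha p$ then yields $C n^{\alpha p} L^{1-\alpha p}$. Combined with the factor $L^N$ and an integration over $\mathbb{S}^{N-1}$ (which contributes only a bounded constant), this produces $\int_I \Phi^{-p/2}\, dt\, dx \le C n^{\alpha p} L^{N+1-\alpha p} = C n^{\alpha p} \lambda_{1+N}(I)^{1-\alpha p/(1+N)}$, proving the lemma in the regime $\alpha p < 1$.

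The hard part is that the Jensen--Fubini--Voronoi chain breaks down when $\alpha p \ge 1$, because the one-dimensional integral then diverges at each $a_i$; yet the lemma is stated for the larger range $\alpha p < 1+N$. To cover $\alpha p \in [1,1+N)$ one must drop the lossy Jensen step and work directly with the layer-cake identity $\int_I \Phi^{-p/2}\, dt\, dx = \tfrac{p}{2}\int_0^\infty u^{-p/2-1} \lambda_{1+N}\{\Phi<u\}\, du$. The slab-based estimate $\lambda_{1+N}\{\Phi<u\}\le Cn u^{1/(2\alpha)} L^N$ (from Markov's inequality applied to the inner spherical integral together with the volume of a union of $n$ parallel slabs of width $2u^{1/(2\alpha)}$) and the trivial bound $\le L^{N+1}$, balanced at $u^\ast=(L/n)^{2\alpha}$, already yield the correct rate but only when $\alpha p<1$. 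The crucial additional input needed for the full range is a sharpened small-$u$ sublevel-set bound $\lambda_{1+N}\{\Phi<u\}\le C n u^{(1+N)/(2\alpha)}$, which comes from the $n=1$ comparison $\Phi(t,x)\asymp |(t,x)-(t^i,x^i)|^{2\alpha}$ near each isolated point (itself the spherical integral formula $\int_{\mathbb{S}^{N-1}}|a+b\cdot w|^{2\alpha}\,\sigma(dw)\asymp (|a|^2+|b|^2)^\alpha$); establishing this localization for configurations of arbitrary $n$ is the principal technical step.
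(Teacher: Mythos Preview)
Your Jensen--Fubini reduction to a one-dimensional Voronoi estimate is a clean way to handle the sub-range $\alpha p < 1$, but the proposal does not actually prove the lemma in the stated range $\alpha p < 1+N$. The layer-cake sketch you offer for $\alpha p \ge 1$ rests on the sublevel-set bound $\lambda_{1+N}\{\Phi < u\} \le C\, n\, u^{(1+N)/(2\alpha)}$, and this bound is \emph{false}. The zero set of $\Phi$ is not just $\{(t^i,x^i)\}_{i=1}^n$: it contains ``phantom'' points where, for each direction $w$, the minimum is attained at a different index $i$. Concretely, take $N=1$ (so $\sigma$ is supported on $\{\pm 1\}$), $n=2$, $(t^1,x^1)=(0,0)$, $(t^2,x^2)=(2,0)$; then $\Phi(1,\pm 1)=0$ although $(1,\pm 1)$ is at distance $\sqrt{2}$ from both data points. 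In general the small sublevel set looks like a neighbourhood of the product set $Z = \prod_{\ell=0}^N\{y_\ell^1,\dots,y_\ell^n\}$ in suitable coordinates, so the correct factor is $n^{1+N}$, not $n$, and establishing even that still requires the missing idea below. A secondary issue: your claim that the projection $J_w$ has length $O(L)$ with $L = \lambda_{1+N}(I)^{1/(1+N)}$ fails for non-cubic intervals $I$ (try $I=[0,\epsilon]\times[0,1]^N$), so the $\alpha p<1$ argument as written does not cover all intervals either.

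The idea you are missing, and which the paper supplies, is to avoid collapsing to one direction. Instead of applying Jensen to the full sphere, restrict the spherical integral to $N+1$ small caps around $e_0=-e_1,e_1,\dots,e_N$, \emph{sum} the $N+1$ contributions first, and only then apply Jensen over a single cap $S(e_1,\delta)$. After the elementary inequality $\sum_\ell |z_\ell|^{2\alpha}\ge(\sum_\ell |z_\ell|^2)^{\alpha}$ (used with $z_\ell = \min_i |(t-t^i)+(x-x^i)\cdot w_\ell|$), the linear change of variables $f_w:(t,x)\mapsto y=(y_0,\dots,y_N)$ with $y_\ell = t+x\cdot w_\ell$ has bounded Jacobian for $w$ near $e_1$, and the integrand becomes $\big(\sum_\ell \min_i |y_\ell - y_\ell^i|^2\big)^{-\alpha p/2}$. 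Decomposing $f_w(I)$ into the (convex) Voronoi-type cells $\Gamma_z$ indexed by $z\in Z=\prod_\ell\{y_\ell^1,\dots,y_\ell^n\}$ reduces matters to $\sum_{z\in Z}\int_{\Gamma_z}|y-z|^{-\alpha p}\,dy$ in $\R^{1+N}$, which is finite precisely when $\alpha p<1+N$; two applications of Jensen (for polar coordinates on each $\Gamma_z$ and then over $z\in Z$) give the bound $|Z|^{\alpha p/(1+N)}[\lambda_{1+N}(I)]^{1-\alpha p/(1+N)}\le n^{\alpha p}[\lambda_{1+N}(I)]^{1-\alpha p/(1+N)}$. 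This simultaneously handles the full range $\alpha p<1+N$ and arbitrary (not necessarily cubic) intervals $I$.
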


\begin{proof}
Fix $(t^1, x^1), \dots, (t^n, x^n) \in I$.
Let $\delta > 0$ be a small constant to be determined.
For $\ell = 1, \dots, N$, let $e_\ell$ denote the unit vector in $\R^N$ 
whose $\ell$-th entry is 1 and all other entries are 0. Let $e_0 = -e_1$.
Also, let $S(e_\ell, \delta) = \{ w \in \mathbb{S}^{N-1} : |w - e_\ell| \le \delta\}$.
Suppose $\delta$ is small enough so that
$S(e_0, \delta), \dots, S(e_N, \delta)$ are disjoint.
For each $0 \le \ell \le N$, 
fix a rotation matrix $R_\ell$ such that $R_\ell e_1 = e_\ell$ and
let $w_\ell = R_\ell w$.
Then
\begin{equation*}
\begin{split}
&\int_{\mathbb{S}^{N-1}} 
\min_{1\le i \le n} |(t + x\cdot w) - (t^i + x^i \cdot w)|^{2\alpha} \sigma(dw)\\
& \ge \sum_{\ell=0}^{N} \int_{S(e_\ell, \delta)}
\min_{1\le i \le n} |(t + x\cdot w) - (t^i + x^i \cdot w)|^{2\alpha} \sigma(dw)\\
& = \int_{S(e_1, \delta)} \sum_{\ell=0}^{N}
\min_{1\le i \le n} |(t + x\cdot w_\ell) - (t^i + x^i \cdot w_\ell)|^{2\alpha} \sigma(dw).
\end{split}
\end{equation*}
Let $M = \sigma(S(e_1, \delta))$.
Since $s \mapsto s^{-p/2}$ is a convex function on $\R_+$, 
by Jensen's inequality,
\begin{equation*}
\begin{split}
&\int_I dt\,dx \left[ \int_{\mathbb{S}^{N-1}}
\min_{1\le i \le n} |(t + x\cdot w) - (t^i + x^i \cdot w)|^{2\alpha} 
\sigma(dw) \right]^{-\frac p 2}\\
&\le M^{-p/2} \int_I dt\, dx \Bigg[ \int_{S(e_1, \delta)}  \sum_{\ell=0}^N
\min_{1\le i \le n} |(t + x\cdot w_\ell) - (t^i + x^i \cdot w_\ell)|^{2\alpha}
\frac{\sigma(dw)}{M}\Bigg]^{-\frac p 2}\\
& \le M^{-p/2} \int_I dt\,dx \int_{S(e_1, \delta)} \Bigg[ \sum_{\ell=0}^N
\min_{1\le i \le n} |(t + x\cdot w_\ell) - (t^i + x^i \cdot w_\ell)|^{2\alpha}\Bigg]^{-\frac p 2} 
\frac{\sigma(dw)}{M}.
\end{split}
\end{equation*}
By using the inequality 
$(\sum_{\ell=0}^N |z_\ell|)^\alpha \le \sum_{\ell=0}^N |z_\ell|^\alpha$ 
for $0 < \alpha < 1$, and Fubini's theorem, this is
\begin{equation}\label{lem1eq2}
\le C \int_{S(e_1, \delta)} \sigma(dw) \int_I dt\,dx  \Bigg[ \sum_{\ell=0}^N
\min_{1\le i \le n} |(t + x\cdot w_\ell) - (t^i + x^i \cdot w_\ell)|^{2}\Bigg]^{-\frac{\alpha p}{2}}.
\end{equation}
For each $w \in S(e_1, \delta)$, we estimate the integral over $I$
using the linear transformation from $\R^{1+N}$ to itself
\[f_w : (t, x) \mapsto y = (y_0, \dots, y_N)\]
defined by $y_\ell = t + x \cdot w_\ell$ for $\ell = 0, \dots, N$.
Write $w_\ell = (w_{\ell,1}, \dots, w_{\ell,N})$
and denote the Jacobian by 
\begin{equation*}
J_w = \det Df_w = \det \begin{pmatrix}
1 & w_{0,1} & \cdots & w_{0, N}\\
1 & w_{1,1} & \cdots & w_{1, N}\\
\vdots & & \ddots & \\
1 & w_{N, 1} & \cdots & w_{N, N}
\end{pmatrix}.
\end{equation*}
Since $w \mapsto J_w$ is continuous and $J_{e_1} = 2$,
we can choose and fix a small enough constant $0 < \delta < 1$ such that 
\begin{equation}\label{Jacobian}
1 \le J_w \le 3 \quad \text{for all } w \in S(e_1, \delta).
\end{equation}
Fix $w \in S(e_1, \delta)$ and let $y^i_\ell = y^i_\ell(w) = t^i + x^i \cdot w_\ell$. 
Then, under the transformation, 
\begin{equation*}
\begin{split}
&\int_I dt\,dx \Bigg[ \sum_{\ell=0}^N
\min_{1\le i \le n} |(t + x\cdot w_\ell) - (t^i + x^i \cdot w_\ell)|^2\Bigg]^{-\frac{\alpha p}{2}}\\
& \le C \int_{f_w(I)} \frac{dy}{{\big(\sum_{\ell=0}^N 
\min\limits_{1 \le i\le n} |y_\ell - y^i_\ell|^2\big)}^{\frac{\alpha p}{2}}}.
\end{split}
\end{equation*}
Consider the Cartesian product $Z = \prod_{\ell=0}^N \{ y^1_\ell, \dots, y^{n}_\ell \}$. 
This set consists of at most $n^{1+N}$ different points in $\R^{1+N}$. 
For each $z = (z_0, \dots, z_N) \in Z$, define 
\[ \Gamma_z = 
\Big\{ y \in f_w(I) : |y_\ell - z_\ell| = \min_{1 \le i \le n} |y_\ell - y^i_\ell| 
\text{ for all } \ell = 0, \dots, N \Big\}. 
\]
Then $\bigcup_{z \in Z} \Gamma_z = f_w(I)$ and the interiors of $\Gamma_z$ are 
non-overlapping, so that
\begin{equation*}
\begin{split}
\int_{f_w(I)} \frac{dy}{{\big(\sum_{\ell=0}^N 
\min\limits_{1 \le i\le n} |y_\ell - y^i_\ell|^2\big)}^{\frac{\alpha p}{2}}}
& = \sum_{z \in Z} \int_{\Gamma_z} \frac{dy}{|y-z|^{\alpha p}}.
\end{split}
\end{equation*}
For each $z \in Z$, we compute the integral over $\Gamma_z$ 
using polar coordinates $y = z + \rho \theta$.
Note that $f_w(I)$ is a convex set in $\R^{1+N}$, and so is $\Gamma_z$. 
Thus, for each $\theta \in \mathbb{S}^N$
the variable $\rho$ takes values between two nonnegative numbers
$\rho_z(\theta) \le \tilde \rho_z(\theta)$.
Let $\sigma(d\theta)$ be the surface measure on $\mathbb{S}^N$.
Then
\begin{equation*}
\begin{split}
\sum_{z \in Z} \int_{\Gamma_z} \frac{dy}{|y-z|^{\alpha p}}
&= \sum_{z \in Z} \int_{\mathbb{S}^{N}} \sigma(d\theta) 
\int_{\rho_z(\theta)}^{\tilde\rho_z(\theta)} \rho^{N-\alpha p} d\rho\\
& = \frac{1}{1+N-\alpha p} \sum_{z \in Z} \int_{\mathbb{S}^{N}} 
[\tilde\rho_z(\theta)^{1+N-\alpha p} - \rho_z(\theta)^{1+N-\alpha p}] \sigma(d\theta)\\
& \le \frac{1}{1+N-\alpha p} \sum_{z \in Z} \int_{\mathbb{S}^{N}} 
{[\tilde\rho_z(\theta)^{1+N} - \rho_z(\theta)^{1+N}]}^{1-\frac{\alpha p}{1+N}}
\sigma(d\theta).
\end{split}
\end{equation*}
The last inequality follows from $b^q - a^q \le (b-a)^q$
for $0 \le a \le b$ and $0 < q < 1$, which can be verified easily.
Since the Lebesgue measure of $\Gamma_z$ is
\begin{equation*}
\lambda_{1+N}(\Gamma_z) 
= \frac{1}{1+N} \int_{\mathbb{S}^{N}}[\tilde \rho_z(\theta)^{1+N} - \rho_z(\theta)^{1+N}] 
\sigma(d\theta)
\end{equation*}
and the function $s \mapsto s^{1-\frac{\alpha p}{1+N}}$ is concave on $\R_+$,
we can use Jensen's inequality to get that
\begin{equation*}
\begin{split}
&\sum_{z \in Z} \int_{\mathbb{S}^{N}} 
[\tilde \rho_z(\theta)^{1+N} - \rho_z(\theta)^{1+N}]^{1- \frac{\alpha p}{1+N}} 
\sigma(d\theta)\\
& \le \sum_{z \in Z} \bigg(\int_{\mathbb{S}^{N}} 
[\tilde \rho_z(\theta)^{1+N} - \rho_z(\theta)^{1+N}] \sigma(d\theta) 
\bigg)^{1-\frac{\alpha p}{1+N}}\\
& = {(1+N)}^{1-\frac{\alpha p}{1+N}} 
\sum_{z \in Z} \big(\lambda_{1+N}(\Gamma_z)\big)^{1-\frac{\alpha p}{1+N}}.
\end{split}
\end{equation*}
Let $|Z|$ denote the cardinality of $Z$. Then by Jensen's inequality again, this is
\begin{equation*}
\begin{split}
&= {(1+N)}^{1-\frac{\alpha p}{1+N}} |Z| \cdot \frac{1}{|Z|}\sum_{z \in Z} 
\big(\lambda_{1+N}(\Gamma_z)\big)^{1-\frac{\alpha p}{1+N}}\\
& \le C|Z| \bigg( \frac{1}{|Z|}\sum_{z \in Z} 
\lambda_{1+N}(\Gamma_z) \bigg)^{1-\frac{\alpha p}{1+N}}\\
& = C{|Z|}^{\frac{\alpha p}{1+N}} {[\lambda_{1+N}(f_w(I))]}^{1-\frac{\alpha p}{1+N}}.
\end{split}
\end{equation*}
Since $|Z| \le n^{1+N}$ and $\lambda_{1+N}(f_w(I)) \le C \lambda_{1+N}(I)$ by \eqref{Jacobian},
we deduce that
\begin{equation*}
\int_I dt\,dx \Bigg[ \sum_{\ell=0}^N
\min_{1\le i \le n} |(t+ x\cdot w_\ell) - (t^i + x^i \cdot w_\ell)|^{2\alpha}\Bigg]^{-p/2}
\le C n^{\alpha p} {[\lambda_{1+N}(I)]}^{1-\frac{\alpha p}{1+N}},
\end{equation*}
where $C$ is a constant independent of $w \in S(e_1, \delta)$.
Then put this back into \eqref{lem1eq2} to complete the proof.
\end{proof}

The proof of Lemma \ref{lem1} also yields the following result.

\begin{lemma}\label{lem1'}
Let $T$ be a compact interval in $\R^k$.
Let $p > 0$ be such that $\alpha p < k$.
Then there exists a finite constant $C$ such that for all convex subsets $F$ of $T$,
for all $n \ge 1$ and $y, y^1, \dots, y^n \in F$,
\[ \int_F \frac{dy}{\big( \sum_{\ell=1}^k \min\limits_{1\le i \le n} |y_\ell - y^i_\ell|^2 
\big)^{\frac{\alpha p}{2}}} \le C n^{\alpha p} [\lambda_k(F)]^{1-\frac{\alpha p}{k}}. \]
\end{lemma}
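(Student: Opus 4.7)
The plan is to simply reuse the second half of the proof of Lemma \ref{lem1}, starting from where the linear transformation has already been performed. In Lemma \ref{lem1}, once the map $f_w$ is applied, we arrive at an integral of exactly the shape appearing in Lemma \ref{lem1'}, taken over the convex set $f_w(I) \subset \R^{1+N}$. So for Lemma \ref{lem1'} we do the same argument directly in $\R^k$, with $F$ playing the role of $f_w(I)$ and $k$ replacing $1+N$.

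Concretely, I would proceed as follows. First, form the Cartesian grid $Z = \prod_{\ell=1}^k\{y^1_\ell, \dots, y^n_\ell\}$, which has cardinality $|Z| \le n^k$. For each $z = (z_1, \dots, z_k) \in Z$, define
\[
\Gamma_z = \Big\{y \in F : |y_\ell - z_\ell| = \min_{1 \le i \le n} |y_\ell - y^i_\ell| \text{ for all } \ell = 1, \dots, k\Big\},
\]
so that $\bigcup_{z \in Z}\Gamma_z = F$ and the $\Gamma_z$ have non-overlapping interiors. This decomposes the integral as
\[
\int_F \frac{dy}{\big(\sum_{\ell=1}^k \min_i |y_\ell - y^i_\ell|^2\big)^{\alpha p/2}} = \sum_{z \in Z}\int_{\Gamma_z} \frac{dy}{|y-z|^{\alpha p}}.
\]
On each $\Gamma_z$, I would introduce polar coordinates $y = z + \rho\theta$ centered at $z$; here the convexity of $F$ (inherited by each $\Gamma_z$, since $\Gamma_z$ is a finite intersection of $F$ with half-spaces of the form $\{|y_\ell - z_\ell| \le |y_\ell - y^i_\ell|\}$) is what guarantees that for each direction $\theta \in \mathbb{S}^{k-1}$ the radial slice is a single interval $[\rho_z(\theta), \tilde\rho_z(\theta)]$.

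Evaluating the radial integral produces the quantity $\tilde\rho_z(\theta)^{k-\alpha p} - \rho_z(\theta)^{k-\alpha p}$ (divided by $k-\alpha p > 0$, which is where the assumption $\alpha p < k$ enters). Applying the elementary inequality $b^q - a^q \le (b-a)^q$ for $0 \le a \le b$ and $q = 1 - \alpha p/k \in (0,1)$ gives
\[
\tilde\rho_z(\theta)^{k-\alpha p} - \rho_z(\theta)^{k-\alpha p} \le \bigl(\tilde\rho_z(\theta)^k - \rho_z(\theta)^k\bigr)^{1-\alpha p/k},
\]
and then Jensen's inequality with the concave function $s \mapsto s^{1-\alpha p/k}$ pulls the exponent outside the $\theta$-integral, producing $[\lambda_k(\Gamma_z)]^{1-\alpha p/k}$ up to a constant.

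Finally, summing over $z \in Z$ and using Jensen's inequality a second time (with the same concave function) yields
\[
\sum_{z \in Z}[\lambda_k(\Gamma_z)]^{1-\alpha p/k} \le |Z|^{\alpha p/k}\Bigl(\sum_{z \in Z}\lambda_k(\Gamma_z)\Bigr)^{1-\alpha p/k} \le n^{\alpha p}[\lambda_k(F)]^{1-\alpha p/k},
\]
which is the desired bound. I do not expect any real obstacle: the only place where the hypotheses are used are $\alpha p < k$ (to keep the radial exponent positive and to apply Jensen to a concave power) and the convexity of $F$ (to ensure each radial slice of each $\Gamma_z$ is an interval). Both are explicit in the hypothesis, so the argument goes through verbatim.
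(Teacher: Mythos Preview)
Your proposal is correct and follows exactly the approach the paper intends: the paper simply remarks that ``the proof of Lemma~\ref{lem1} also yields'' Lemma~\ref{lem1'}, and you have faithfully extracted the relevant portion of that proof (from the decomposition into the cells $\Gamma_z$ onward), with $F$ in place of $f_w(I)$ and $k$ in place of $1+N$. Your explicit justification that each $\Gamma_z$ is convex (as the intersection of the convex set $F$ with half-spaces) is a helpful detail that the paper leaves implicit.
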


For any mean-zero Gaussian vector $(X_1, \dots, X_n)$,
the following formula can be easily verified:
\begin{equation}\label{detcov} 
\det \mathrm{Cov}(X_1, \dots, X_n) = \mathrm{Var}(X_1) 
\prod_{j=2}^n \mathrm{Var}(X_j| X_1, \dots, X_{j-1}),
\end{equation}
where $\det \mathrm{Cov}(X_1, \dots, X_n)$ denotes the determinant of the covariance 
matrix of $(X_1, \dots, X_n)$.

\begin{lemma}\label{lem:fBm}
Let $I$ be a compact interval in $\R$ and $t^0 = 0$.
\begin{enumerate}
\item[(i)] If $I \subset (0, \infty)$ and $B(t)$ is a fractional Brownian motion 
with Hurst index $0 < \alpha < 1$, then
there exist constants $0 < C_1 \le C_2 < \infty$ such that for any $n \ge 1$,
for any $t, t^1, \dots, t^n \in I$,
\[ C_2 \min_{0\le i \le n} |t - t^i|^{2\alpha} \le \mathrm{Var}(B(t)|B(t^1), \dots, B(t^n))
\le C_2 \min_{0\le i \le n} |t - t^i|^{2\alpha}. \]
\item[(ii)] There exists a constant $C > 0$ such that 
for any $n \ge 1$, for any $t^1, \dots, t^n \in I$, 
for any permutation $\pi$ on $\{1, \dots, n\}$, 
\[ \prod_{j=2}^n \min_{1 \le i \le j-1}|t^{\pi(j)} - t^{\pi(i)}| 
\ge C^n \prod_{j=2}^n \min_{1 \le i \le j-1}|t^j - t^i|. \]
\end{enumerate}
\end{lemma}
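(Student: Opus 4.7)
I would treat parts (i) and (ii) separately.

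For part (i), the upper bound is immediate from the projection characterization of conditional variance: $\mathrm{Var}(B(t)\mid B(t^1),\dots,B(t^n))$ equals the squared $L^2$-distance from $B(t)$ to the linear span of $B(t^1),\dots,B(t^n)$, and since $B(t^0)=B(0)=0$ trivially lies in this span, the distance is bounded above by $\E[(B(t)-B(t^{i^*}))^2]=|t-t^{i^*}|^{2\alpha}$, where $i^*$ achieves $\min_{0\le i\le n}|t-t^i|$. The lower bound is the classical strong local nondeterminism of fBm on intervals bounded away from the origin, due to Pitt \cite{P78}. The cleanest derivation runs through the harmonizable representation of $B$ together with a Fourier argument of the same type used in Proposition \ref{prop:LND} above, but in the simpler one-parameter setting; I would simply quote it.

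For part (ii), the key reduction is to pass to the sorted rearrangement. Let $s_1<\cdots<s_n$ denote the sorted version of $\{t^1,\dots,t^n\}$ and set $g_j=s_j-s_{j-1}$ for $j=2,\dots,n$. I plan to establish that for \emph{every} permutation $\sigma$ of $\{1,\dots,n\}$,
\begin{equation*}
\prod_{j=2}^n g_j \;\le\; \prod_{j=2}^n \min_{1\le i\le j-1}|s_{\sigma(j)}-s_{\sigma(i)}| \;\le\; 2^{n-1}\prod_{j=2}^n g_j.
\end{equation*}
Since the ``sorted gap product'' on both ends depends only on the underlying set $\{t^1,\dots,t^n\}$, applying this two-sided bracket once to the given permutation $\pi$ (after the natural reindexing) and once to the identity immediately yields the stated inequality.

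Both inequalities are proved by induction on $n$. Given $\sigma$, let $k$ be the sorted rank of the final point $s_{\sigma(n)}$, so that the step-$n$ factor equals $\min(g_k,g_{k+1})$ (with the obvious modification when $k\in\{1,n\}$), because the first $n-1$ points of the ordering exhaust $\{s_1,\dots,s_n\}\setminus\{s_k\}$ and hence the nearest of them to $s_k$ is $s_{k-1}$ or $s_{k+1}$. Removing $s_k$ yields $n-1$ points with sorted gaps $g_2,\dots,g_{k-1},\,g_k+g_{k+1},\,g_{k+2},\dots,g_n$, and applying the inductive hypothesis to the first $n-2$ factors of the product, the induction closes via the elementary algebraic bracket
\begin{equation*}
g_k g_{k+1} \;\le\; (g_k+g_{k+1})\min(g_k,g_{k+1}) \;\le\; 2\,g_k g_{k+1},
\end{equation*}
whose left inequality uses $g_k+g_{k+1}\ge\max(g_k,g_{k+1})$ and whose right uses $g_k+g_{k+1}\le 2\max(g_k,g_{k+1})$.

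The main obstacle is really just the combinatorial bookkeeping of the induction, especially at the boundary cases $k=1$ and $k=n$ where only one of the two adjacent gaps is present to ``merge'' after removal. These cases are actually easier (one gap is preserved outright instead of being combined) but require slightly separate notation. Part (i) poses no serious difficulty once one notices that including $t^0=0$ in the minimum is precisely what accommodates the degeneration of the bound as $t$ approaches the origin.
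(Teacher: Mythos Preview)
Your treatment of part (i) coincides with the paper's: the upper bound via the projection characterization and $B(0)=0$, the lower bound quoted from Pitt's strong LND.

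For part (ii), your argument is correct but the paper proceeds quite differently. Rather than a combinatorial induction on sorted gaps, the paper first translates $I$ into $(0,\infty)$ so that every $t\in I$ exceeds $\mathrm{diam}(I)$ (which forces $\min_{0\le i\le j-1}|t^j-t^i|=\min_{1\le i\le j-1}|t^j-t^i|$), then applies part (i) with $\alpha=1/2$ (standard Brownian motion) together with the factorisation \eqref{detcov} and the permutation invariance of $\det\mathrm{Cov}(B(t^{\pi(1)}),\dots,B(t^{\pi(n)}))$. This yields the inequality in two lines but at the cost of a constant depending on $I$ and of using part (i) as an input. Your direct induction is longer to set up but is entirely elementary (no Gaussian ingredients), and in fact delivers the explicit universal constant $C=1/2$ via the two-sided bracket $\prod_{j\ge 2} g_j \le \prod_{j\ge 2}\min_{i<j}|s_{\sigma(j)}-s_{\sigma(i)}|\le 2^{n-1}\prod_{j\ge 2} g_j$. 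Either route suffices for the applications later in Section 4.
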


\begin{proof}
(i). The first inequality is due to the strong LND property 
of the fractional Brownian motion \cite[Lemma 7.1]{P78};
the second inequality holds because the conditional variance is 
$\le \mathrm{Var}(B(t))$ and 
is $\le \mathrm{Var}(B(t) - B(t^i))$ for every $1 \le i \le n$.

(ii). Clearly, both sides of the inequality are translation invariant, so by shifting
we may assume that $I \subset (0, \infty)$ and $t > \mathrm{diam}(I)$ for all $t \in I$. 
Take $\alpha = 1/2$.
Since $\det\mathrm{Cov}(B(t^{\pi(1)}), \dots, B(t^{\pi(n)}))
= \det\mathrm{Cov}(B(t^1), \dots, B(t^n))$,
the result follows from part (i) of this lemma and the formula \eqref{detcov}.
\end{proof}

\begin{lemma}\label{lem:CD82}\cite[Lemma 2]{CD82}
Let $Z_1, \dots, Z_n$ be mean-zero Gaussian random variables that are 
linearly independent. 
Let $g: \R \to \R$ be a measurable function such that 
$\int_\R g(x) e^{-\eps x^2} dx < \infty$ for every $\eps > 0$. Then
\begin{equation*}
\begin{split}
&\int_{\R^n} g(\xi_1) \exp\bigg[ 
{-\frac 1 2 \mathrm{Var}\Big( \sum_{j=1}^n \xi_j Z_j\Big)} \bigg] d\xi_1 \cdots d\xi_n\\
& = \frac{(2\pi)^{(n-1)/2}}{[\det \mathrm{Cov}(Z_1, \dots, Z_n)]^{1/2}}
\int_\R g\Big(\frac{x}{V_1}\Big) \,e^{-x^2/2} \, dx,
\end{split}
\end{equation*}
where $V_1^2 = \mathrm{Var}(Z_1|Z_2, \dots, Z_n)$.
\end{lemma}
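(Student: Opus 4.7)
The plan is to recognize the left-hand side as a centered Gaussian integral on $\R^n$ whose quadratic form is determined by the covariance matrix $\Sigma = \mathrm{Cov}(Z_1, \dots, Z_n)$, and then to integrate out $\xi_2, \dots, \xi_n$ by completing the square. Since the $Z_j$'s are linearly independent, $\Sigma$ is strictly positive definite, which is what makes all the inversions below legitimate and, in particular, ensures $V_1 > 0$.

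First, I would write $\mathrm{Var}\big(\sum_j \xi_j Z_j\big) = \xi^T \Sigma \xi$ and block-decompose
\[
\Sigma = \begin{pmatrix} a & b^T \\ b & C \end{pmatrix}, \qquad a = \mathrm{Var}(Z_1),\ b = \big(\mathrm{Cov}(Z_1, Z_j)\big)_{j \ge 2},\ C = \mathrm{Cov}(Z_2, \dots, Z_n).
\]
With $\tilde\xi = (\xi_2, \dots, \xi_n)^T$, completing the square gives
\[
\xi^T \Sigma \xi = (a - b^T C^{-1} b) \xi_1^2 + (\tilde\xi + \xi_1 C^{-1} b)^T C (\tilde\xi + \xi_1 C^{-1} b),
\]
and the Schur complement formula identifies the scalar coefficient $a - b^T C^{-1} b$ as exactly $V_1^2 = \mathrm{Var}(Z_1 \mid Z_2, \dots, Z_n)$.

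Next, I would apply Fubini (justified by the hypothesis $\int g(x) e^{-\eps x^2}\, dx < \infty$ together with the positivity of $C$, which supplies the required Gaussian decay in $\tilde\xi$) and evaluate the inner integral against the shifted $(n-1)$-dimensional Gaussian, picking up the factor $(2\pi)^{(n-1)/2}(\det C)^{-1/2}$. The remaining one-dimensional integral
\[
\int_\R g(\xi_1)\, e^{-V_1^2 \xi_1^2 / 2}\, d\xi_1
\]
becomes $V_1^{-1} \int_\R g(x/V_1)\, e^{-x^2/2}\, dx$ after substituting $x = V_1 \xi_1$. Using the determinant form of the Schur complement, $\det \Sigma = V_1^2 \det C$, the overall prefactor $(2\pi)^{(n-1)/2}(\det C)^{-1/2} V_1^{-1}$ collapses to $(2\pi)^{(n-1)/2}(\det \Sigma)^{-1/2}$, yielding the stated identity.

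The main obstacle is essentially bookkeeping rather than conceptual: one must carefully verify the Schur complement identifications (both for $V_1^2$ and for $\det \Sigma$) and justify Fubini under only the weak tail condition on $g$. Beyond this, the argument is a pure Gaussian computation; linear independence of the $Z_j$'s enters only to guarantee strict positive definiteness, which makes $C^{-1}$ and the change of variable by $V_1$ well-defined.
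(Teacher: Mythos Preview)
Your argument is correct: the block decomposition of $\Sigma$, the completion of the square, the identification of the Schur complement $a - b^T C^{-1} b$ with the conditional variance $V_1^2$, the Gaussian integration in $\tilde\xi$, and the determinant relation $\det\Sigma = V_1^2 \det C$ all check out, and the integrability hypothesis on $g$ together with positive definiteness of $C$ is exactly what is needed to justify Fubini.

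There is, however, nothing to compare against in the paper itself: the lemma is quoted from Cuzick and DuPreez \cite[Lemma~2]{CD82} and is stated without proof. Your Schur-complement/completing-the-square approach is the standard one and is essentially how the original reference proceeds as well.
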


\begin{lemma}\label{lem:J}
Let $T$ be a compact interval in $(0, \infty) \times \R^N$.
Let $q_{j,k} \ge 0$ and $q > 0$ be such that 
$\alpha (d+2q) < 1+N$ and $\sum_{k=1}^d q_{j,k} = q$ for each $j$.
For $\bar z = (z^1, \dots, z^n) \in T^n$, let
\begin{equation*}
J(\bar z) = \int_{\R^{nd}} \Big(\prod_{j=1}^n \prod_{k=1}^d {|\xi_k^j|}^{q_{j,k}}\Big)
\E\big( e^{i\sum_{j=1}^n \sum_{k=1}^d \xi_k^j u_k(z^j)} \big) \, d\bar \xi.
\end{equation*}
where $\bar \xi = (\xi^1_1, \dots, \xi^n_d)$.
Then there exist constants $C< \infty$ and $r_0 > 0$ such that the following hold
for all $n \ge 2$:
\begin{enumerate}
\item[(i)] For all compact intervals $I \subset T$ with side lengths $\le r_0$, 
\begin{equation}\label{lemJ1}
\int_{I^n} J(\bar z) \, d\bar z \le C^n {(n!)}^{\alpha d + (\frac 1 2 +2\alpha)q} 
{[\lambda_{1+N}(I)]}^{n(1 - \frac{\alpha (d+2q)}{1+N})\frac{d}{d+2q}}.
\end{equation}
\item[(ii)] If, in addition, $I$ has side lengths $\le r$ with $0 < r \le r_0$, then
\begin{equation}\label{lemJ2}
\int_{I^n} J(\bar z) \, d\bar z \le C^n {(n!)}^{\alpha d + (\frac 1 2+\alpha)q} 
{r}^{n(1+N - \alpha (d+q))}.
\end{equation}
\end{enumerate}
\end{lemma}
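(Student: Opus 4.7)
The plan is to factor $J(\bar z)$ over the independent coordinates of $u$, apply a Cauchy--Schwarz estimate to express each factor in terms of conditional variances, invoke strong LND (Proposition \ref{prop:LND}) to control these variances, and integrate over $\bar z \in I^n$ by iterated application of Lemma \ref{lem1}.

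First, since $u_1,\dots,u_d$ are i.i.d.\ centered Gaussian with common covariance $\Sigma := \mathrm{Cov}(u_1(z^1),\dots,u_1(z^n))$, the characteristic function inside $J(\bar z)$ factorises as $\prod_k \exp\bigl(-\tfrac12 (\xi^{(k)})^T\Sigma\,\xi^{(k)}\bigr)$, so $J(\bar z)=\prod_{k=1}^d J_k(\bar z)$ where each $J_k$ is a weighted Gaussian integral. Diagonalising $\Sigma$ by the substitution $\eta=\Sigma^{1/2}\xi$ and applying Cauchy--Schwarz $|(\Sigma^{-1/2}\eta)_j|^{q_{j,k}}\le V_j^{-q_{j,k}}|\eta|^{q_{j,k}}$, where $V_j^2:=\mathrm{Var}(u_1(z^j)\mid u_1(z^i):i\ne j)=1/(\Sigma^{-1})_{jj}$, yields
\[ J_k(\bar z)\le C^n(\det\Sigma)^{-1/2}\Big(\prod_j V_j^{-q_{j,k}}\Big)\int_{\R^n}|\eta|^{Q_k}e^{-|\eta|^2/2}\,d\eta,\qquad Q_k:=\sum_j q_{j,k}. \]
The residual Gaussian integral is at most $C^n(n+Q_k)^{Q_k/2}$; multiplying over $k$, using $\sum_k Q_k=nq$ and Stirling gives
\[ J(\bar z)\le C^n(n!)^{q/2}(\det\Sigma)^{-d/2}\prod_j V_j^{-q}. \]

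Then, for any permutation $\pi$ of $\{1,\dots,n\}$, identity \eqref{detcov} and Proposition \ref{prop:LND} imply
\[ \det\Sigma\ge C^n\prod_{j\ge2}\mathcal{D}_{\pi,j}(\bar z),\qquad V_j^2\ge C\,\mathcal{D}_j^{\star}(\bar z), \]
where $\mathcal{D}_{\pi,j}(\bar z)=\int_{\mathbb{S}^{N-1}}\min_{i<j}|(t^{\pi(j)}-t^{\pi(i)})+(x^{\pi(j)}-x^{\pi(i)})\cdot w|^{2\alpha}\sigma(dw)$ and $\mathcal{D}_j^\star(\bar z)$ is the same quantity with $\min_{i\ne j}$ in place of $\min_{i<j}$. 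To integrate the resulting estimate $J(\bar z)\le C^n(n!)^{q/2}\prod_{j\ge2}\mathcal{D}_{\pi,j}^{-d/2}\prod_j\mathcal{D}_j^{\star,-q/2}$ over $\bar z\in I^n$, I would apply H\"older with conjugate exponents $(d+2q)/d$ and $(d+2q)/(2q)$ to separate the two products, reducing to $\pi=\mathrm{id}$ by the symmetry of $J$. Iterating Lemma \ref{lem1} with $p=d+2q$ on each piece (in the order $z^n,z^{n-1},\dots,z^2$) produces $(n!)^{\alpha(d+2q)}[\lambda_{1+N}(I)]^{n(1-\alpha(d+2q)/(1+N))}$; raising to the H\"older powers $d/(d+2q)$ and $2q/(d+2q)$ and combining contributes a factorial exponent $\alpha d+2\alpha q$ and $\lambda_{1+N}$-exponent $n(1-\alpha(d+2q)/(1+N))\cdot d/(d+2q)$. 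Together with the prior $(n!)^{q/2}$ this gives \eqref{lemJ1}.

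For the refined bound \eqref{lemJ2}, when $I$ has side lengths at most $r$, one handles one of the coordinate directions of $\mathbb{S}^{N-1}$ via the one-dimensional strong LND of Corollary \ref{cor:LND} and Lemma \ref{lem:fBm}(ii), trading a factor $r^{\alpha q}$ gain per integration step for a factorial saving of $(n!)^{-\alpha q}$; this yields $r^{1+N-\alpha(d+q)}$ per step and the claimed $(n!)^{\alpha d+(\frac12+\alpha)q}$. The principal obstacle lies in the Step~2 reduction: since Lemma \ref{lem:CD82} accommodates only a single-variable weight, one must either iterate it along a Cholesky-type ordering or, as above, diagonalise $\Sigma$ and use a single Cauchy--Schwarz--Stirling bookkeeping to extract $\prod_j V_j^{-q_{j,k}}$ and the key $(n!)^{q/2}$ factor. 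The second delicate point is balancing the H\"older exponents in Step~3 so that the asymmetric factor $\prod_{j\ge2}\mathcal{D}_{\pi,j}^{-d/2}$ and the symmetric factor $\prod_j\mathcal{D}_j^{\star,-q/2}$ combine to produce precisely the $d/(d+2q)$ factor appearing in the $\lambda_{1+N}(I)$-exponent.
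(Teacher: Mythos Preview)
Your reduction to the pointwise bound $J(\bar z)\le C^n(n!)^{q/2}(\det\Sigma)^{-d/2}\prod_j V_j^{-q}$ is correct and coincides with the paper's inequality \eqref{J(x)}; the diagonalisation-plus-Cauchy--Schwarz route you use is a valid alternative to the paper's combination of the generalised H\"older inequality with Lemma~\ref{lem:CD82}.

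The genuine gap is in the integration of the symmetric factor. After H\"older, the ordered piece $\int_{I^n}\prod_{j\ge2}\mathcal D_{\mathrm{id},j}^{-(d+2q)/2}\,d\bar z$ can indeed be handled by iterating Lemma~\ref{lem1}, since when you integrate out $z^n$ only $\mathcal D_{\mathrm{id},n}$ depends on $z^n$. But for the symmetric piece $\int_{I^n}\prod_j\mathcal D_j^{\star,-(d+2q)/4}\,d\bar z$ this fails: every $\mathcal D_j^\star$ with $j<n$ still contains $z^n$ through the term $i=n$ in $\min_{i\ne j}$, so integrating out $z^n$ does not reduce the problem, and the obvious monotonicity $\mathcal D_j^\star\le\mathcal D_{\mathrm{id},j}$ goes the wrong way for a reciprocal. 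The paper deals with this by first passing to the coordinates $y=f_w(t,x)$ and using AM--GM to decouple into one-dimensional integrals over each $\tilde I_\ell$; then, for fixed $\ell$, it orders the points $y^{\pi(1)}_\ell\le\cdots\le y^{\pi(n)}_\ell$, notes that $\min_{i\ne j}|y^{\pi(j)}_\ell-y^{\pi(i)}_\ell|$ is a nearest-neighbour gap, bounds the product of those gaps below by $C^n\prod_{j\ge2}|y^{\pi(j)}_\ell-y^{\pi(j-1)}_\ell|^2$, and invokes Lemma~\ref{lem:fBm}(ii) to convert back to nested minima $\min_{i<j}$. The squaring is exactly what produces the $2\alpha q$ (rather than $\alpha q$) in the factorial, and the fact that this piece yields no $\lambda_{1+N}(I)$ factor is why the final exponent carries the $d/(d+2q)$; your arithmetic asserts this outcome, but the mechanism you describe would give the full exponent $n(1-\alpha(d+2q)/(1+N))$ if it worked.

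For part~(ii), Corollary~\ref{cor:LND} is not the relevant input: that is strong LND in $t$ with $x$ fixed, not a one-dimensional reduction of the spherical LND. The paper proves \eqref{lemJ2} by applying AM--GM to the \emph{product} over $\ell$ before any H\"older split, so that the determinant and variance contributions sit together in each one-dimensional integral; it then uses a nearest-neighbour indicator scheme $\theta\in\{0,1\}^n$ with Lemma~\ref{lem:fBm}(ii) to track on which side each two-sided minimum falls, and sums over the $\le 2^n$ choices of $\theta$. This produces exponent $\alpha(d+q)/(1+N)$ per coordinate direction and hence the sharper $r^{n(1+N-\alpha(d+q))}$ with factorial exponent $\alpha d+(\tfrac12+\alpha)q$.
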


\begin{proof}
(i). Let $I \subset T$ be a compact interval with side lengths $\le r_0$.
By the fact that $u_1, \dots, u_d$ are i.i.d.\ and Gaussian,
and by the generalized H\"older inequality,
\begin{equation*}
\begin{split}
J(\bar z) &= \prod_{k=1}^d \int_{\R^n} \prod_{j=1}^n {|\xi_k^j|}^{q_{j,k}}
\exp\bigg[ -\frac 1 2 \mathrm{Var}\Big(\sum_{j=1}^n \xi_k^j u_1(z^j)\Big) \bigg] d\bar \xi_k\\
&\le \prod_{k=1}^d \prod_{j=1}^n \bigg\{ \int_{\R^n} {|\xi_k^j|}^{nq_{j,k}}
\exp\bigg[ -\frac 1 2 \mathrm{Var}\Big(\sum_{j=1}^n \xi_k^j u_1(z^j)\Big) \bigg] d\bar \xi_k\bigg\}^{\frac1 n},
\end{split}
\end{equation*}
where $\bar \xi_k = (\xi^1_k, \dots, \xi^n_k) \in \R^n$.
It is enough to consider points $z^1, \dots, z^n \in I$ that are distinct from each other
since the set of such points has full Lebesgue measure in $I^n$.
Then $u_1(z^1), \dots, u_1(z^n)$ are linearly independent by Proposition \ref{lem:LI}.
By Lemma \ref{lem:CD82} and Stirling's formula, $J(\bar z)$ is bounded by
\begin{align}\label{J(x)}
\notag
&\quad C^n\prod_{k=1}^d \prod_{j=1}^n
\Big\{{[\det \mathrm{Cov}(u_1(z^1), \dots, u_1(z^n))]}^{-\frac{1}{2}}
{[\mathrm{Var}(u_1(z^j)|u_1(z^i) : i\ne j )]}^{-\frac{nq_{j,k}}{2}}
\Gamma\Big(\frac{nq_{j,k}+1}{2}\Big)\Big\}^{\frac 1 n}\\
&\le C^n {(n!)}^{\frac q 2} {[\det \mathrm{Cov}(u_1(z^1), \dots, u_1(z^n))]}^{-\frac d 2} \,
\prod_{j=1}^n{[\mathrm{Var}(u_1(z^j)|u_1(z^i) : i \ne j)]}^{-\frac q 2}.
\end{align}
Define $e_0, \dots, e_N, w_0, \dots, w_N$ and $\delta$ as in the proof of Lemma \ref{lem1}.
By \eqref{detcov} and Proposition \ref{prop:LND}, for $r_0$ small enough,
\begin{align*}
&{[\det \mathrm{Cov}(u_1(z^1), \dots, u_1(z^n))]}^{\frac d 2}
\prod_{j=1}^n {[\mathrm{Var}(u_1(z^j)|u_1(z^i) : i \ne j)]}^{\frac q 2}\\
& \ge C^n
\prod_{j=1}^n \bigg[ \int_{\mathbb{S}^{N-1}}r_j(w)^{2\alpha} 
\sigma(dw) \bigg]^{\frac d 2} \prod_{j=1}^n 
\bigg[\int_{\mathbb{S}^{N-1}} \tilde r_j(w)^{2\alpha} \sigma(dw) \bigg]^{\frac q 2}\\
& \ge C^n \prod_{j=1}^n \bigg[ \int_{S(e_1, \delta)} 
\sum_{\ell=0}^N r_j(w_\ell)^{2\alpha} 
\sigma(dw) \bigg]^{\frac d 2} \prod_{j=1}^n 
\bigg[\int_{S(e_1, \delta)} \sum_{\ell=0}^N\tilde r_j(w_\ell)^{2\alpha} \sigma(dw) 
\bigg]^{\frac q 2},
\end{align*}
where $r_1(w) \equiv 1$,
\begin{align*}
r_j(w) &= \min_{1 \le i \le j-1} |(t^j + x^j \cdot w) - (t^i + x^i \cdot w)|, \quad 2 \le j \le n,\\
\tilde{r}_j(w) &= \min_{1 \le i \le n, \,i \ne j} |(t^j + x^j \cdot w) - (t^i + x^i \cdot w)|, 
\quad 1\le j \le n.
\end{align*}
Then, by the generalized H\"older inequality,
\begin{align*}
&{[\det \mathrm{Cov}(u_1(z^1), \dots, u_1(z^n))]}^{\frac d 2}
\prod_{j=1}^n {[\mathrm{Var}(u_1(z^j)|u_1(z^i) : i \ne j)]}^{\frac q 2}\\
& \ge C^n \bigg[\int_{S(e_1, \delta)} \prod_{j=1}^n 
\Big(\sum_{\ell=0}^N r_j(w_\ell)^{2\alpha}\Big)^{\frac{d}{2m}} \Big(
\sum_{\ell=0}^N \tilde r_j(w_\ell)^{2\alpha}\Big)^{\frac{q}{2m}} \sigma(dw) \bigg]^m,
\end{align*}
where $m = \frac{n(d+q)}{2}$.
Recall that $\delta$ is a constant and $M = \sigma(S(e_1, \delta))$. 
Then, by Jensen's inequality for the convex function $x \mapsto x^{-m}$ on $\R_+$, 
we have
\begin{align}\label{J:eq2}
\begin{split}
&{[\det \mathrm{Cov}(u_1(z^1), \dots, u_1(z^n))]}^{-\frac d 2}
\prod_{j=1}^n {[\mathrm{Var}(u_1(z^j)|u_1(z^i) : i \ne j)]}^{-\frac q 2}\\
& \le C^n M^{-m-1} \int_{S(e_1, \delta)}
\prod_{j=1}^n\bigg[ \Big( \sum_{\ell=0}^N r_j(w_\ell)^{2\alpha} \Big)^{-\frac{d}{2}}
\Big( \sum_{\ell=0}^N \tilde r_j(w_\ell)^{2\alpha} \Big)^{-\frac{q}{2}}\bigg]
\sigma(dw)\\
& \le C^n \int_{S(e_1, \delta)} 
\prod_{j=1}^n\bigg[ \Big( \sum_{\ell=0}^N r_j(w_\ell)^2\Big)^{-\frac{\alpha d}{2}}
\Big(\sum_{\ell=0}^N \tilde r_j(w_\ell)^2\Big)^{-\frac{\alpha q}{2}} \bigg] \sigma(dw).
\end{split}
\end{align}
Recall the transformation $f_w: z = (t, x) \mapsto y = (y_0, \dots, y_N)$ defined by
$y_\ell = t + x \cdot w_\ell$ and that it satisfies \eqref{Jacobian}.
To estimate the integral of $J(\bar z)$ over $I^n$, first use \eqref{J(x)} 
and \eqref{J:eq2}. Then, by interchanging the order of integration and 
using the transformation, 
followed by H\"older's inequality with exponents 
$\frac{d+2q}{d}$ and $\frac{d+2q}{2q}$, we get that
\begin{align*}
&\int_{I^n} J(\bar z) \, d\bar z\\
& \le C^n {(n!)}^{\frac q 2} \int_{S(e_1, \delta)} \sigma(dw) 
\int_{{[f_w(I)]}^n} \frac{d y^1 \cdots d y^n}{\prod_{j=1}^n\big[ \big( \sum_{\ell=0}^N 
\min\limits_{1 \le i \le j-1} |y^j_\ell - y^i_\ell|^2\big)^{\frac{\alpha d}{2}}\,
\big(\sum_{\ell=0}^N\min\limits_{i:\,i \ne j}|y^j_\ell - y^i_\ell|^2\big)^{\frac{\alpha q}{2}}
\big]}\\
& \le C^n {(n!)}^{\frac q 2} \int_{S(e_1, \delta)} A_1(w) A_2(w) \sigma(dw),
\end{align*}
where $y^j_\ell = t^j + x^j \cdot w_\ell$,
\begin{align*}
A_1(w) &= \Bigg\{ \int_{{[f_w(I)]}^n} \frac{dy^1\cdots dy^n}
{\prod_{j=1}^n \big( \sum_{\ell=0}^N \min\limits_{1 \le i \le j-1} 
|y^j_\ell - y^i_\ell|^2\big)^{\frac{\alpha (d+2q)}{2}}}\Bigg\}^{\frac{d}{d+2q}},\\ 
A_2(w) &= \Bigg\{ \int_{{[f_w(I)]}^n} \frac{dy^1\cdots dy^n}
{\prod_{j=1}^n \big( \sum_{\ell=0}^N \min\limits_{i:\, i \ne j} |y^j_\ell - y^i_\ell|^2\big)^{\frac{\alpha (d+2q)}{4}}}\Bigg\}^{\frac{2q}{d+2q}}.
\end{align*}
Now, we need the assumption that $\alpha (d+2q) < 1+N$, and recall that \eqref{Jacobian} implies
$\lambda_{1+N}(f_w(I)) \le C\lambda_{1+N}(I)$ for all $w \in S(e_1, \delta)$.
Then by Lemma \ref{lem1'}, we have
\begin{equation*}\label{A1}
A_1(w) \le C^n (n!)^{\alpha d} 
{[\lambda_{1+N}(I)]}^{n(1 - \frac{\alpha(d+2q)}{1+N})\frac{d}{d+2q}}.
\end{equation*}
For $A_2$, we first use the AM--GM inequality to get
\begin{align*}
A_2(w) &\le \Bigg\{ \int_{{[f_w(I)]}^n} \frac{dy^1 \cdots dy^n}{\prod_{j=1}^n
\prod_{\ell=0}^N\min\limits_{i:\, i \ne j}|y_\ell^j - y_\ell^i|^{\frac{\alpha(d+2q)}{2(1+N)}}}
\Bigg\}^{\frac{2q}{d+2q}}.
\end{align*}
Since $I$ has side lengths $\le r_0$, we can see from the definition of $y^j_\ell$ that 
each $y^j = (y^j_0, \dots, y^j_N)$ is contained in 
$\prod_{\ell=0}^N \tilde I_\ell$, where each $\tilde I_\ell$ 
is an interval in $\R$ of length $\le (1+N)r_0$. From this, we get
\begin{align*}
& \le \prod_{\ell=0}^N \Bigg\{ \int_{{(\tilde I_\ell)}^n} \frac{dy^1_\ell \cdots dy^n_\ell}
{\prod_{j=1}^n \min\limits_{i:\, i \ne j}|y_\ell^j - y_\ell^i|^{\frac{\alpha(d+2q)}{2(1+N)}}}
\Bigg\}^{\frac{2q}{d+2q}}.
\end{align*}
Fix $\ell$. For each $(y^1_\ell, \dots, y^n_\ell) \in (\tilde I_\ell)^n$,
let $\pi$ be a permutation such that
$y^{\pi(1)}_\ell \le \cdots \le y^{\pi(n)}_\ell$, and
note that the $y^j_\ell$ are all bounded. 
For convenience, set $y_\ell^{\pi(0)} = y_\ell^{\pi(n+1)} = 0$.
It follows that
\begin{align*}
\prod_{j=1}^n \min\limits_{i:\, i \ne j}|y_\ell^j - y_\ell^i|
& = \prod_{j=1}^n \min\limits_{i:\, i \ne j}|y_\ell^{\pi(j)} - y_\ell^{\pi(i)}|\\
& = \prod_{j=1}^n \min\{ |y_\ell^{\pi(j)} - y_\ell^{\pi(j-1)}|, 
|y_\ell^{\pi(j)} - y_\ell^{\pi(j+1)}|\}\\
& \ge C^n \prod_{j=1}^n \big(|y_\ell^{\pi(j)} - y_\ell^{\pi(j-1)}|\cdot
|y_\ell^{\pi(j)} - y_\ell^{\pi(j+1)}|\big)\\
& \ge C^n \prod_{j=2}^n |y^{\pi(j)}_\ell - y^{\pi(j-1)}_\ell|^2\\
& \ge C^n \prod_{j=2}^n \min_{1 \le i \le j-1}|y^j_\ell - y^i_\ell|^2.
\end{align*}
The last inequality follows from Lemma \ref{lem:fBm}(ii).
Then, by Lemma \ref{lem1'},
\begin{equation*}\label{A2}
\begin{split}
A_2(w) & \le C^n
\prod_{\ell=0}^N \Bigg\{ \int_{{(\tilde I_\ell)}^n} \frac{dy^1_\ell \cdots dy^n_\ell}
{\prod_{j=2}^n \min\limits_{1 \le i \le j-1}|y_\ell^j - y_\ell^i|^{\frac{\alpha(d+2q)}{1+N}}}
\Bigg\}^{\frac{2q}{d+2q}}
\le C^n (n!)^{2\alpha q}.
\end{split}
\end{equation*}
The constant $C$ does not depend on $w \in S(e_1, \delta)$.
This leads to \eqref{lemJ1}.

To prove (ii), suppose that $I$ has side lengths $\le r$, where $r \le r_0$.
Again, by \eqref{detcov}, Proposition \ref{prop:LND} and 
the generalized H\"older inequality, for $r_0$ small enough,
\begin{equation*}
\begin{split}
&{[\det \mathrm{Cov}(u_1(z^1), \dots, u_1(z^n))]}^{\frac d 2} 
\prod_{j=1}^n{[\mathrm{Var}(u_1(z^j)|u_1(z^i) : i \ne j)]}^{\frac q 2}\\
& \ge C^n \prod_{j=1}^n \bigg[\int_{\mathbb{S}^{N-1}} r_j(w)^{2\alpha} 
\sigma(dw) \bigg]^{\frac d 2} \prod_{j=1}^n 
\bigg[\int_{\mathbb{S}^{N-1}} \tilde r_j(w)^{2\alpha} \sigma(dw) \bigg]^{\frac q 2}\\
& \ge C^n \bigg[ \int_{\mathbb{S}^{N-1}} \prod_{j=1}^n 
\Big({r_j(w)}^{\frac{\alpha d}{m}}\, {\tilde r_j(w)}^{\frac{\alpha q}{m}} \Big)\,
\sigma(dw) \bigg]^m\\
& \ge C^n \bigg[  \int_{S(e_1, \delta)}\sum_{\ell=0}^N \prod_{j=1}^n 
\Big({r_j(w_\ell)}^{\frac{\alpha d}{m}}\, {\tilde r_j(w_\ell)}^{\frac{\alpha q}{m}} \Big)\,
\sigma(dw) \bigg]^m,\\
\end{split}
\end{equation*}
where $m = \frac{n(d+q)}{2}$, $r_j$ and $\tilde r_j$ are defined as before.
Define the variables $y^j_\ell = t^j + x^j \cdot w_\ell$ as before.
Then, by the AM--GM inequality and Jensen's inequality,
\begin{align*}
&{[\det \mathrm{Cov}(u_1(z^1), \dots, u_1(z^n))]}^{-\frac d 2} \,
\prod_{j=1}^n{[\mathrm{Var}(u_1(z^j)|u_1(z^i) : i \ne j)]}^{-\frac q 2}\\
& \le C^n \bigg[ \int_{S(e_1, \delta)} \prod_{\ell=0}^N \prod_{j=1}^n 
\Big({r_j(w_\ell)}^{\frac{\alpha d}{m(1+N)}}\, {\tilde r_j(w_\ell)}^{\frac{\alpha q}{m(1+N)}}
\Big) \sigma(dw) \bigg]^{-m}\\
& \le C^n \int_{S(e_1, \delta)} \prod_{\ell=0}^N \prod_{j=1}^n 
\Big({r_j(w_\ell)}^{-\frac{\alpha d}{1+N}}\, {\tilde r_j(w_\ell)}^{-\frac{\alpha q}{1+N}}
\Big) \sigma(dw).
\end{align*}
Since $I$ has side lengths $\le r$, each $y^j_\ell$ is contained in an interval 
$\tilde I_\ell \subset \R$ of length $\le (1+N)r$.
Then, using \eqref{J(x)} and the transformation $f_w: z \mapsto y$, we have
\begin{equation}\label{intJ1}
\begin{split}
\int_{I^n} J(\bar z)\, d\bar z & \le C^n {(n!)}^{\frac q 2} 
\int_{S(e_1, \delta)} \sigma(dw) \times\\
&\qquad \qquad \prod_{\ell=0}^N \int_{{(\tilde I_\ell)}^n} \frac{dy^1_\ell \cdots dy^n_\ell}
{\prod_{j=2}^n \min\limits_{1 \le i \le j-1}|y^j_\ell - y^i_\ell|^{\frac{\alpha d}{1+N}}
\prod_{j=1}^n\min\limits_{i:\, i \ne j}|y^j_\ell - y^i_\ell|^{\frac{\alpha q}{1+N}}}.
\end{split}
\end{equation}
Fix $\ell$ and consider the integral over $(\tilde I_\ell)^n$.
For $(y^1_\ell, \dots, y^n_\ell) \in (\tilde I_\ell)^n$,
let $\pi$ be a permutation such that $y^{\pi(1)}_\ell \le \dots \le y^{\pi(n)}_\ell$.
Then, by Lemma \ref{lem:fBm}(ii), 
\begin{align*}
&\quad \prod_{j=2}^n \min\limits_{1 \le i \le j-1}{|y^j_\ell - y^i_\ell|}^{\frac{\alpha d}{1+N}} 
\prod_{j=1}^n \min\limits_{i:\,i \ne j}{|y^j_\ell - y^i_\ell|}^{\frac{\alpha q}{1+N}}\\
& \ge C^n \prod_{j=2}^n
\min\limits_{1 \le i \le j-1}{|y^{\pi(j)}_\ell - y^{\pi(i)}_\ell|}^{\frac{\alpha d}{1+N}} 
\prod_{j=1}^n \min\limits_{i:\,i \ne j}{|y^{\pi(j)}_\ell - y^{\pi(i)}_\ell|}^{\frac{\alpha q}{1+N}}
\\
& \ge C^n \prod_{j=1}^n \Big( {|y^{\pi(j)}_\ell - y^{\pi(j-1)}_\ell|}^{\frac{\alpha d}{1+N}} 
{|y^{\pi(j)}_\ell - y^{\pi(j-1)}_\ell|}^{\frac{\alpha q}{1+N}\theta_j}
{|y^{\pi(j)}_\ell - y^{\pi(j+1)}_\ell|}^{\frac{\alpha q}{1+N}(1-\theta_j)}\Big)
\end{align*}
for some $\theta = (\theta_1, \dots, \theta_n) \in \{0, 1\}^n$ with 
$\theta_1 = 0$ and $\theta_n = 1$. Denote $\theta'_j = 1-\theta_j$.
By Lemma \ref{lem:fBm}(ii) again, we get
\begin{align*}
& \ge C^n \prod_{j=2}^n
|y^{\pi(j)}_\ell - y^{\pi(j-1)}_\ell|^{\frac{\alpha}{1+N}(d + q(\theta_j+\theta'_{j-1}))}\\
& \ge C^n \prod_{j=2}^n\min_{1 \le i \le j-1}
{|y^j_\ell - y^i_\ell|}^{\frac{\alpha}{1+N}(d + q(\theta_j+\theta'_{j-1}))}.
\end{align*}
Hence, for each $\ell$, the integral over $(\tilde I_\ell)^n$ in \eqref{intJ1} is bounded by
\begin{align*}
& C^n \sum_{\theta} \int_{{(\tilde I_\ell)}^n}\frac{dy^1_\ell \cdots dy^n_\ell}
{\prod_{j=2}^n\min\limits_{1 \le i \le j-1}
{|y^j_\ell - y^i_\ell|}^{\frac{\alpha}{1+N}(d + q(\theta_j +\theta'_{j-1}))}}.
\end{align*}
The sum runs over all $\theta \in \{0, 1\}^n$ 
with $\theta_1 = 0$ and $\theta_n = 1$, containing $< 2^n$ summands.
Note that
$\frac{\alpha}{1+N}(d+q (\theta_j +\theta'_{j-1})) \le \frac{\alpha}{1+N} (d+2q) < 1$.
By Lemma \ref{lem1'} and the relation $\theta_j + \theta'_j = 1$, we get
\begin{align*}
&\le C^n \sum_\theta \prod_{j=1}^n 
\Big(j^{\frac{\alpha}{1+N}(d+q(\theta_j + \theta'_{j-1}))} 
r^{1-\frac{\alpha}{1+N}(d+q(\theta_j+\theta'_{j-1}))}\Big)\\
& \le C^n r^{n(1-\frac{\alpha (d+q)}{1+N})} \sum_\theta
\prod_{j=1}^n j^{\frac{\alpha}{1+N}(d+q\theta_j)}
\prod_{j=1}^n (2j)^{\frac{\alpha}{1+N}(d+q\theta'_j)}\\
&\le C^n (n!)^{\frac{\alpha(d + q)}{1+N}} r^{n(1-\frac{\alpha (d+q)}{1+N})}
\end{align*}
(we have set $\theta'_0 = 0$ in the above).
Finally, put this back into \eqref{intJ1} to conclude \eqref{lemJ2}.
\end{proof}

We can use the above lemmas to get moment estimates for the local times.
Recall the following formulas for the local times $L(v, T)$ of an $\R^d$-valued 
random field $\{ X(z) : z \in T\}$, which can be found in \cite[\S 25]{GH80}:
for any even number $n \ge 2$, for any $v, \tilde v \in \R^d$,
\begin{align}
\label{Eq:M-LT}
\E[L(v, T)^n] &= (2\pi)^{-nd} \int_{T^n} d\bar z \int_{\R^{nd}} d\bar \xi\,
e^{- i \sum_{j=1}^n \xi^j \cdot v} \,\E\big(e^{i\sum_{j=1}^n \xi^j \cdot X(z^j)}\big),\\
\label{Eq:M-LTI}
\E[(L(v, T) - L(\tilde v, T))^n] &= (2\pi)^{-nd} \int_{T^n} d\bar z \int_{\R^{nd}} d\bar \xi\,
\prod_{j=1}^n\big(e^{- i \xi^j \cdot v}  - e^{- i \xi^j \cdot \tilde v}\big) \, 
\E\big(e^{i\sum_{j=1}^n \xi^j \cdot X(z^j)}\big).
\end{align}

\begin{proposition}\label{prop:M-LT}
Assume \eqref{beta} and $\alpha d < 1+N$.
Let $T$ be a compact interval in $(0, \infty) \times \R^N$
and $L(v, T)$ be the local time of $\{u(t, x) : (t, x) \in T\}$.
Then the following statements hold for some constant $r_0 > 0$:
\begin{enumerate}
\item[(i)] There exists a constant $C$ such that for all intervals $I$ in $T$ with side 
lengths $\le r_0$, for all $v \in \R^d$, for all even numbers $n \ge 2$,
\begin{equation}\label{M-LT:eq1}
\E[L(v, I)^n] \le C^n {(n!)}^{\alpha d} {[\lambda_{1+N}(I)]}^{n(1 - \frac{\alpha d}{1+N})}.
\end{equation}
\item[(ii)]
For any $0 < \gamma <\min\{\frac 1 2 (\frac{1+N}{\alpha} - d), 1\}$,
there exists a constant $C$ such that
for all intervals $I$ in $T$ with side lengths $\le r$, where $0 < r \le r_0$, 
for all $v, \tilde v \in \R^d$, for all even numbers $n \ge 2$,
\begin{equation}\label{M-LT:eq2}
\E[(L(v, I) - L(\tilde v, I))^n] \le 
C^n |v-\tilde v|^{n\gamma} {(n!)}^{\alpha d + (\frac 1 2 +\alpha)\gamma} 
{r}^{n(1+N-\alpha (d + \gamma))}.
\end{equation}
\end{enumerate}
\end{proposition}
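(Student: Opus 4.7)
The plan is to derive both bounds directly from the moment formulas \eqref{Eq:M-LT} and \eqref{Eq:M-LTI} by reducing each to an application of Lemma \ref{lem:J}, which is already in place and does the heavy lifting.

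For part (i), I would start from \eqref{Eq:M-LT} with $X = u$ and observe that, since $u$ is mean-zero Gaussian, the expectation $\mathbb{E}(e^{i\sum_{j}\xi^j \cdot u(z^j)})$ is a nonnegative real-valued Gaussian characteristic function. The factor $e^{-i\sum_j \xi^j \cdot v}$ has modulus one, so
\[
\mathbb{E}[L(v,I)^n] \le (2\pi)^{-nd}\int_{I^n} J_0(\bar z)\, d\bar z,
\]
where $J_0(\bar z)$ is precisely the integral $J(\bar z)$ of Lemma \ref{lem:J} with $q=0$ and all $q_{j,k}=0$. Since $\alpha d < 1+N$ by hypothesis, Lemma \ref{lem:J}(i) applies (the condition $\alpha(d+2q)<1+N$ reduces to $\alpha d < 1+N$) and yields \eqref{M-LT:eq1} after absorbing $(2\pi)^{-nd}$ into $C^n$.

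For part (ii), I would begin with \eqref{Eq:M-LTI} and apply the elementary interpolation inequality
\[
|e^{-i\xi^j \cdot v} - e^{-i\xi^j \cdot \tilde v}| \le 2^{1-\gamma} |\xi^j|^\gamma |v-\tilde v|^\gamma, \qquad 0<\gamma\le 1,
\]
to extract the factor $|v-\tilde v|^{n\gamma}$. To match the form required by Lemma \ref{lem:J}, which weights each $|\xi_k^j|$ by an exponent $q_{j,k}$ with prescribed row sum $\sum_k q_{j,k}=q$, I would use the bound $|\xi^j|^\gamma \le \sum_{k=1}^d |\xi_k^j|^\gamma$ (valid for $0<\gamma\le 1$) and expand
\[
\prod_{j=1}^n |\xi^j|^\gamma \le \sum_{(k_1,\dots,k_n)\in\{1,\dots,d\}^n} \prod_{j=1}^n |\xi_{k_j}^j|^\gamma.
\]
Each summand corresponds to Lemma \ref{lem:J} with $q=\gamma$ and the particular choice $q_{j,k_j}=\gamma$, $q_{j,k}=0$ for $k\ne k_j$. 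The hypothesis $\gamma< \tfrac12\bigl(\tfrac{1+N}{\alpha}-d\bigr)$ is exactly what is needed for $\alpha(d+2\gamma)<1+N$, so Lemma \ref{lem:J}(ii) applies and each of the $d^n$ summands is bounded by $C^n (n!)^{\alpha d + (\frac12+\alpha)\gamma} r^{n(1+N-\alpha(d+\gamma))}$. Absorbing the factors $(2\pi)^{-nd}$, $2^{n(1-\gamma)}$, and $d^n$ into $C^n$ gives \eqref{M-LT:eq2}.

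There is no conceptual obstacle left; the whole proof is essentially a bookkeeping exercise built on top of Lemma \ref{lem:J}. The only subtle point is the component-wise splitting $|\xi^j|^\gamma \le \sum_k |\xi_k^j|^\gamma$, which is what makes the weights $q_{j,k}$ satisfy the uniform row-sum condition $\sum_k q_{j,k}=\gamma$ demanded by Lemma \ref{lem:J}; without this reduction, the Gaussian-integral identity in Lemma \ref{lem:CD82} used inside Lemma \ref{lem:J} would not produce the clean factorial bound. All other manipulations are trivial modulo constants that can be absorbed into $C^n$.
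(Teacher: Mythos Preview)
Your proof is correct and, for part (ii), identical to the paper's. For part (i), note that Lemma \ref{lem:J} as stated requires $q>0$, so you cannot literally invoke it with $q=0$; the paper instead proceeds directly via \eqref{detcov}, Proposition \ref{prop:LND}, and repeated application of Lemma \ref{lem1}, which is precisely what the proof of Lemma \ref{lem:J} degenerates to when $q=0$.
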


\begin{proof}
(i). Write $z = (t, x)$. By \eqref{Eq:M-LT},
\begin{equation*}
\begin{split}
\E[L(v, I)^n] &\le (2\pi)^{-nd} 
\int_{I^n}  \int_{\R^{nd}} \E[e^{i\sum_{j=1}^n \xi^j \cdot u(z^j)}] \,d\bar \xi\,d\bar z\\
&= (2\pi)^{-nd/2} 
\int_{I^n} [\det \mathrm{Cov}(u_1(z^1), \dots, u_1(z^n))]^{-d/2} \, 
dz^1 \cdots dz^n.
\end{split}
\end{equation*}
By \eqref{detcov} and Proposition \ref{prop:LND}, for $r_0$ sufficiently small, this is
\begin{equation*}
\le C^n \int_{I^n} \prod_{j=2}^n \bigg[\int_{\mathbb{S}^{N-1}} \min_{1 \le i \le j-1} 
|(t^j+ x^j \cdot w) - (t^i + x^i\cdot w)|^{2\alpha}\sigma(dw)\bigg]^{-d/2} \, 
dz^1 \cdots dz^n.
\end{equation*}
Then, integrate in the order $dz^n, dz^{n-1}, \dots, dz^1$
and apply Lemma \ref{lem1}(i) repeatedly to get \eqref{M-LT:eq1}.

(ii). By \eqref{Eq:M-LTI},
\begin{equation*}
\begin{split}
\E[(L(v, I) - L(\tilde v, I))^n]
& \le (2\pi)^{-nd} \int_{I^n} d\bar z \int_{\R^{nd}} d\bar\xi \,
\prod_{j=1}^n \big|e^{-i\xi^j \cdot v} - e^{-i\xi^j \cdot \tilde v}\big| \,
\E\big(e^{i\sum_{j=1}^n \xi^j \cdot u(z^j)} \big).
\end{split}
\end{equation*}
For any $0 < \gamma < 1$, we have 
the inequality $|e^{-ix} - e^{-iy}| \le 2 |x-y|^\gamma$,
which implies that
\[ \prod_{j=1}^n \big|e^{-i\xi^j \cdot v} - e^{-i\xi^j \cdot \tilde v}\big|
\le 2^n |v - \tilde v|^{n\gamma} \sum_{(k_1, \dots, k_n)} 
\prod_{j=1}^n {\big|\xi^j_{k_j}\big|}^\gamma, \]
where the sum is taken over all $(k_1, \dots, k_n) \in \{1, \dots, d\}^n$.
Thus,
\begin{equation*}
\begin{split}
\E[(L(v, I) - L(\tilde v, I))^n]
& \le C^n |v - \tilde v|^{n\gamma} \sum_{(k_1, \dots, k_n)}
 \int_{I^n} d\bar z \int_{\R^{nd}} d\bar\xi \,
\Big(\prod_{j=1}^n {|\xi^j_{k_j}|}^\gamma \Big) \,
\E\big(e^{i\sum_{j=1}^n \xi^j \cdot u(z^j)} \big).
\end{split}
\end{equation*}
Let $\gamma$ satisfy $\alpha (d+ 2\gamma) < 1+N$.
Then we can derive \eqref{M-LT:eq2} using Lemma \ref{lem:J}(ii)
with $q_{j, k} = \gamma$ if $k = k_j$, and $q_{j, k} = 0$ otherwise.
\end{proof}

We now conclude the main result of this section.

\begin{theorem}\label{thm:JC}
Assume \eqref{beta}.
\begin{enumerate}
\item[(i)] If $\alpha d < 1$, then for any fixed $x_0 \in \R^N$,
$\{ u(t, x_0) : t \in T_1 \}$ has a jointly continuous local time on 
any compact interval $T_1$ in $(0, \infty)$.
\smallskip
\item[(ii)] If $\alpha d < N$, then for any fixed $t_0 > 0$, 
$\{ u(t_0, x) : x \in T_2 \}$ has a jointly continuous local time on any compact 
interval $T_2$ in $\R^N$.
\smallskip
\item[(iii)] If $\alpha d < 1+N$, then $\{ u(t, x) : (t, x) \in T\}$ has a jointly continuous 
local time on any compact interval $T$ in $(0, \infty) \times \R^N$.
\end{enumerate}
\end{theorem}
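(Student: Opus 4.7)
The plan is to reduce the three cases to already-proved ingredients: cases (i) and (ii) are classical once we have one-parameter strong LND, and case (iii) is the one that requires the moment machinery of Proposition \ref{prop:M-LT}.

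For case (i), Corollary \ref{cor:LND} gives the strong LND property for $\{u_1(t,x_0):t\in[a,\infty)\}$ in the classical one-parameter sense of Berman, with variogram exponent $2\alpha$. Since $\alpha d<1$, the standard criterion of Berman \cite{B73} (see also \cite[Theorem 28.1]{GH80}) immediately yields a jointly continuous local time on any compact subinterval $T_1\subset(0,\infty)$. For case (ii), Proposition \ref{prop:LNDx} gives the multiparameter strong LND of Pitt \cite{P78} for the stationary field $\{u_1(t_0,x):x\in\R^N\}$ with index $2\alpha$. Since $\alpha d<N$, Pitt's theorem yields a jointly continuous local time on any compact $T_2\subset\R^N$.

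For case (iii), I will use Proposition \ref{prop:M-LT} together with a multiparameter Kolmogorov argument to show that $(v,z)\mapsto L(v,Q_z)$ admits a continuous version on $\R^d\times T$. First, by a standard covering argument it suffices to prove joint continuity when $T$ is a small interval with side length $\le r_0$; then extend by additivity of the local time in the set variable. For $z,\tilde z\in T$ and $v,\tilde v\in\R^d$, write
\begin{equation*}
L(v,Q_z)-L(\tilde v,Q_{\tilde z})=\bigl[L(v,Q_z)-L(v,Q_{\tilde z})\bigr]+\bigl[L(v,Q_{\tilde z})-L(\tilde v,Q_{\tilde z})\bigr].
\end{equation*}
The second bracket is handled directly by Proposition \ref{prop:M-LT}(ii) applied to $I=Q_{\tilde z}\subset T$: for even $n$ and any admissible $\gamma$,
\begin{equation*}
\E\bigl[(L(v,Q_{\tilde z})-L(\tilde v,Q_{\tilde z}))^n\bigr]\le C^n|v-\tilde v|^{n\gamma}(n!)^{\alpha d+(\tfrac12+\alpha)\gamma}.
\end{equation*}
For the first bracket, decompose the symmetric difference $Q_z\triangle Q_{\tilde z}$ into a bounded number (depending only on $N$) of rectangular "slabs" $I_\ell$, each contained in $T$ and each having at least one side of length $\le|z-\tilde z|$, so that $\lambda_{1+N}(I_\ell)\le C|z-\tilde z|$. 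Then $L(v,Q_z)-L(v,Q_{\tilde z})=\pm\sum_\ell L(v,I_\ell)$ and Proposition \ref{prop:M-LT}(i) gives
\begin{equation*}
\E\bigl[(L(v,Q_z)-L(v,Q_{\tilde z}))^n\bigr]\le C^n(n!)^{\alpha d}|z-\tilde z|^{n(1-\alpha d/(1+N))}.
\end{equation*}
Combining the two bounds and choosing $\gamma>0$ small enough that $\alpha(d+2\gamma)<1+N$, one obtains, for all even $n\ge 2$,
\begin{equation*}
\E\bigl[(L(v,Q_z)-L(\tilde v,Q_{\tilde z}))^n\bigr]\le C^n(n!)^{\kappa}\bigl(|v-\tilde v|+|z-\tilde z|\bigr)^{n\eta}
\end{equation*}
for some $\kappa>0$ and $\eta>0$. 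Taking $n$ large enough that $n\eta$ exceeds the dimension $d+(1+N)$ of the parameter space $\R^d\times T$, the multiparameter Kolmogorov continuity criterion produces a continuous version of $(v,z)\mapsto L(v,Q_z)$ on $\R^d\times T$, which is the required jointly continuous local time.

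The main technical obstacle is the slab decomposition in case (iii): one must verify that the symmetric difference $Q_z\triangle Q_{\tilde z}$ really can be covered by $O_N(1)$ rectangles each of volume $\lesssim|z-\tilde z|$ and each contained in an enlargement of $T$ still satisfying the size condition of Proposition \ref{prop:M-LT}, because only then do the estimates in $v$ (which need small $|v-\tilde v|$ but unconstrained $z$) and the estimates in $z$ (which need small $|z-\tilde z|$ and fixed $v$) chain together into a genuine joint Hölder estimate on $(v,z)$. Once this geometric book-keeping is done, everything is routine application of Kolmogorov's criterion and the standard fact that a jointly continuous $L(v,Q_z)$ extends uniquely to a kernel with the desired properties recalled in Section 2.
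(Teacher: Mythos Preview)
Your approach matches the paper's proof essentially step for step: parts (i)--(ii) via the Berman/Pitt LND criteria, and part (iii) via the slab decomposition of $Q_z\triangle Q_{\tilde z}$ combined with Proposition~\ref{prop:M-LT}(i)--(ii) and the multiparameter Kolmogorov criterion. Your worry about the slab geometry is unnecessary since $Q_z=(-\infty,z]\cap T\subset T$ by definition, so all slabs already sit inside $T$ with no enlargement needed; the only point the paper adds that you gloss over as ``standard'' is a short Fubini argument confirming that the continuous modification $L^*$ still satisfies the defining identity \eqref{LT:def-eq} and hence is a genuine local time.
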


\begin{proof}
(i) and (ii). By Corollary \ref{cor:LND} and Proposition \ref{prop:LNDx}, 
the processes $t \mapsto u(t, x_0)$ and $x \mapsto u(t_0, x)$ satisfy the LND property
in the sense of Berman or Pitt, so the joint continuity of their local times follow from the 
results of \cite{B73, P78, GH80}; see also \cite{X09}.

(iii). By Theorem \ref{thm:LT_E}, $\{u(t, x) : (t, x) \in T\}$ has a square-integrable 
local time on $T$. Denote this local time by $L(v, T)$.
In particular, a.s., for all $B \in \mathscr{B}(\R^d)$ and all $S \in \mathscr{B}(T)$,
\begin{equation}\label{LT:def-eq}
\lambda_{1+N}\{ (t, x) \in S : u(t, x) \in B \} = \int_B L(v, S)\, dv.
\end{equation}
We need to find a version $L^*$ of the local time 
that is jointly continuous.
Let $Q_z = (-\infty, z] \cap T$ for $z \in T$.
In what follows, we can assume that $T$ has side lengths $\le r_0$ 
so that Proposition \ref{prop:M-LT} applies, because
it is enough to prove existence of jointly continuous local times on 
sufficiently small subintervals of $T$.
For all even numbers $n \ge 2$, for all $v, \tilde v\in \R^d$, for all $z, \tilde z \in T$, 
we have
\begin{equation*}
\E[(L(v, Q_z) - L(\tilde v, Q_{\tilde z}))^n] 
\le 2^{n-1}\{ \E[(L(v, Q_z) - L(v, Q_{\tilde z}))^n] 
+ \E[(L(v, Q_{\tilde z}) - L(\tilde v, Q_{\tilde z}))^n] \}.
\end{equation*}
For the first term, 
the difference $L(v, Q_z) - L(v, Q_{\tilde z})$ can be written as
a finite sum of terms (the number of which depends only on $N$) 
of the form $L(v, I_j)$,
where each $I_j$ is a subinterval of $T$ with at least one of its side length 
$\le |z - \tilde z|$, so we can use 
Proposition \ref{prop:M-LT}(i) to bound this term.
Also, we can bound the second term by Proposition \ref{prop:M-LT}(ii).
Hence, we have
\[ \E[(L(v, Q_z) - L(\tilde v, Q_{\tilde z}))^n] \le 
C_n (|v - \tilde v|^\gamma + |z - \tilde z|^\delta)^n \]
with constants $0 < \gamma < \min\{\frac 1 2 (\frac{1+N}{\alpha}-d), 1\}$ and 
$\delta = 1-\frac{\alpha d}{1+N}$.
Then, by a multiparameter version of Kolmogorov's continuity theorem 
(\cite[Proposition 4.2]{CD14} or \cite[Theorem 1.4.1]{Ku}),
we can obtain a process 
$\{ L^*(v, Q_z) : v \in \R^d, z \in T \}$ such that
$(v, z) \mapsto L^*(v, Q_z)$ is jointly continuous 
(moreover, locally H\"older continuous of order $< \gamma$ in $v$, 
and of order $< \delta$ in $z$) and 
$\P\{ L^*(v, Q_z) = L(v, Q_z) \} = 1$ for every $v \in \R^d$ and $z \in T$.
To verify that this version $L^*$ is still a local time, 
note that, for each $z \in T$, by Fubini's theorem, 
\begin{equation*}
\int_\Omega \lambda_d\{ v : L^*(v, Q_z) \ne L(v, Q_z) \} \,d\P 
= \int_{\R^d} \P\{ \omega : L^*(v, Q_z) \ne L(v, Q_z)\} \, dv= 0.
\end{equation*}
Then, there is a single event of probability 1 on which
for all rational $z \in T$ simultaneously, we have
$L^*(v, Q_z) = L(v, Q_z)$ a.e.\ $v$.
This and \eqref{LT:def-eq} imply that $L^*$ satisfies 
\begin{equation*}
\lambda_{1+N}\{ (t, x) \in Q_z : u(t, x) \in B \} = \int_B L^*(v, Q_z)\, dv \quad \text{a.s.}
\end{equation*}
for all rational $z \in T$,
and hence for all $z \in T$ by the continuity of $z \mapsto L^*(v, Q_z)$.
This proves that $L^*$ is a local time and finishes the proof.
\end{proof}

\medskip
\section{Regularity of the local times}

In this section, we investigate the regularity of the local times of $u(t, x)$.
As we have seen, the processes $t \mapsto u(t, x)$ and $x \mapsto u(t, x)$ satisfy 
the strong LND property, so the results of Xiao \cite{X97, X08} can be 
applied to obtain moment estimates, H\"older conditions and moduli of continuity 
for the respective local times.
In the following, we will treat the local times of $u$ regarded
as the random field $(t, x) \mapsto u(t, x)$.
First, we study the differentiability of the local time $L(v, T)$ in $v$ 
and the H\"older regularity of its derivatives.
Then, we give a result on the local and uniform moduli of continuity of $L$ in the set 
variable $T$ and discuss their implications on sample function oscillations.

Conditions for local times of Gaussian random fields to have square-integrable 
partial derivatives have been given in \cite{B69'} and \cite[\S 28]{GH80}.
In Theorem \ref{thm:D} below, we obtain conditions for the existence of 
continuous partial derivatives in $v$.
We make use of the Fourier representation \eqref{LT:L2}
and the estimates in Lemma \ref{lem:J} above, 
which have been established using the spherical integral form of strong LND, 
to provide sufficient conditions for the local times of $u$ to have partial derivatives 
(up to certain order) that are jointly continuous and H\"older continuous.
We employ the Fourier analytic approach of \cite{E81} to prove this result.

\begin{theorem}\label{thm:D}
Assume \eqref{beta}.
Let $T$ be a compact interval in $(0, \infty) \times \R^N$.
If $\alpha (d + 2K) < 1+N$ for some integer $K \ge 1$,
then $u = \{u(t, x) : (t, x) \in T\}$ has a local time $L(v, T)$ such that 
all of its partial derivatives
\[ \partial^p L(v, T) = 
\frac{\partial^{p_1} \cdots \partial^{p_d}}{\partial v_1^{p_1} \cdots \partial v_d^{p_d}} 
L(v, T)\]
of order $|p| \le K$ exist and
are a.s.\ jointly continuous and locally H\"older continuous in $v$ 
of any exponent $\gamma < \min\{\frac 1 2 (\frac{1+N}{\alpha} - d - 2|p|), 1\}$.
\end{theorem}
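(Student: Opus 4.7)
The approach is to follow the Fourier-analytic method of Ehm \cite{E81}, combined with the integral-form LND estimates of Lemma \ref{lem:J}. For each multi-index $p = (p_1, \dots, p_d)$ with $|p| := p_1 + \cdots + p_d \le K$, I would define a candidate for the $p$-th partial derivative of $L(v, T)$ by the polynomially weighted Fourier representation
\[
L_p(v, T) := (2\pi)^{-d} \lim_{M \to \infty} \int_{[-M, M]^d} (-i\xi)^p\, e^{-i\xi \cdot v} \int_T e^{i\xi \cdot u(z)}\, dz\, d\xi,
\]
where the limit is taken in $L^2(\P)$, in direct analogy with \eqref{LT:L2}. Both the existence of this limit and a bound on the moments $\E[L_p(v, T)^n]$ reduce, via the analogue of \eqref{Eq:M-LT}, to estimating
\[
\int_{T^n} d\bar z \int_{\R^{nd}} \prod_{j=1}^n |\xi^j|^{|p|}\, \E\bigl(e^{i\sum_{j=1}^n \xi^j \cdot u(z^j)}\bigr)\, d\bar \xi,
\]
which is precisely the quantity $\int_{T^n} J(\bar z)\, d\bar z$ from Lemma \ref{lem:J} with weights $q_{j,k} = p_k$ and hence total $q = |p|$. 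The hypothesis $\alpha(d + 2K) < 1 + N$ ensures $\alpha(d + 2q) < 1 + N$ for every $|p| \le K$, so Lemma \ref{lem:J}(i) yields a finite bound.

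The H\"older estimate in $v$ proceeds similarly, using the analogue of \eqref{Eq:M-LTI} and the elementary inequality $|e^{-ix} - e^{-iy}| \le 2 |x - y|^\gamma$ valid for any $0 < \gamma < 1$:
\[
\E[(L_p(v, T) - L_p(\tilde v, T))^n] \le C^n |v - \tilde v|^{n\gamma} \sum_{(k_1, \dots, k_n)} \int_{T^n}\! d\bar z \int_{\R^{nd}}\! \prod_{j=1}^n |\xi^j|^{|p|}\, |\xi^j_{k_j}|^\gamma\, \E\bigl(e^{i\sum \xi^j \cdot u(z^j)}\bigr)\, d\bar \xi,
\]
where the sum runs over $(k_1, \dots, k_n) \in \{1, \dots, d\}^n$. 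Each summand again matches the template of Lemma \ref{lem:J}(i), now with weights $q_{j,k} = p_k + \gamma\, \delta_{k, k_j}$, so the total becomes $q = |p| + \gamma$. The applicability condition $\alpha(d + 2(|p| + \gamma)) < 1 + N$ is exactly $\gamma < \frac{1}{2}\bigl(\frac{1+N}{\alpha} - d - 2|p|\bigr)$. Applying Kolmogorov's continuity theorem in $v \in \R^d$ (with $n$ taken large enough for any fixed admissible $\gamma$) then produces a jointly continuous modification of $v \mapsto L_p(v, T)$ whose local H\"older exponent can be made arbitrarily close to $\min\{\frac{1}{2}(\frac{1+N}{\alpha} - d - 2|p|),\, 1\}$, as required.

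The main remaining subtlety, which I expect to be the principal obstacle, is to identify this continuous random field with the classical partial derivative $\partial^p L(v, T)$. The plan here is to work with the pre-limit truncations $L^M(v, T) := (2\pi)^{-d} \int_{[-M, M]^d} e^{-i\xi \cdot v} \hat\nu_T(\xi)\, d\xi$, which are $C^\infty$ in $v$ and satisfy $\partial^p L^M(v, T) = L^M_p(v, T)$ literally. Applying the moment bounds above to the tail differences $L - L^M$ and $L_p - L^M_p$ (both of which are again of the form covered by Lemma \ref{lem:J}, restricted to the complement of $[-M, M]^d$), I would extract a sequence $M_n \to \infty$ along which both $L^{M_n}(\cdot, T) \to L(\cdot, T)$ and $L^{M_n}_p(\cdot, T) \to L_p(\cdot, T)$ hold uniformly on compact subsets of $\R^d$ on a single event of probability one, simultaneously for all $|p| \le K$ (this is permissible since $\alpha(d + 2|p|) < 1 + N$ for every such $p$). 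Uniform convergence of the functions together with uniform convergence of their partial derivatives then yields $L(\cdot, T) \in C^K(\R^d)$ with $\partial^p L = L_p$, finishing the proof.
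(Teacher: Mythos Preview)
Your strategy is essentially the paper's: truncate the Fourier representation, feed the resulting moment integrals into Lemma \ref{lem:J} with $q_{j,k}=p_k$ (and $q_{j,k}=p_k+\gamma\delta_{k,k_j}$ for the increment bound), and close with Kolmogorov. Two points of divergence are worth noting.

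\emph{Identification of the derivative.} You propose to extract a subsequence along which $L^{M_n}$ and all $L^{M_n}_p$ converge \emph{uniformly on compacts} a.s., and then invoke classical calculus. This works, but it is not free: uniform-in-$v$ convergence in $L^{2m}(\P)$ does not by itself yield a.s.\ uniform convergence; you need to also control the increment moments of the tail process $v\mapsto L^{M'}_p(v,T)-L^M_p(v,T)$ (which Lemma \ref{lem:J} does give, restricted to $(\R^d\setminus[-M,M]^d)^n$) and then run a quantitative Kolmogorov/GRR argument to bound $\E\sup_{|v|\le R}|L^{M'}_p-L^M_p|^{2m}$. The paper takes a lighter route: from the same uniform $L^{2m}$ bound it extracts a subsequence with $\E|\partial^p L_{M_j}-\partial^p L_{M_{j-1}}|\le 2^{-j}$, uses Fubini to get a.s.\ convergence in $L^1_{\mathrm{loc}}(\R^d)$, and then identifies the limit as the \emph{weak} derivative of $L$ by integration by parts against test functions. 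Once a continuous modification exists, a continuous weak derivative is a classical derivative, so nothing is lost. The paper's detour through distributions saves the sup-norm estimate.

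\emph{Joint continuity.} The statement asks for \emph{joint} continuity, i.e.\ in $(v,z)$ with $z$ parametrizing $Q_z\subset T$ as in Theorem \ref{thm:JC}, not just continuity in $v$. Your moment bound $\E[L_p(v,I)^n]\le C_n[\lambda_{1+N}(I)]^{n\delta}$ (from Lemma \ref{lem:J}(i)) is exactly the missing ingredient for regularity in the set variable; you just need to feed it into the two-parameter Kolmogorov argument alongside your H\"older bound in $v$, as the paper does by reference to the proof of Theorem \ref{thm:JC}.

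A notational slip: $|(-i\xi)^p|=\prod_k|\xi_k|^{p_k}$, not $|\xi|^{|p|}$; the former is the form Lemma \ref{lem:J} accepts, and you use the correct weights $q_{j,k}=p_k$ immediately after, so this is harmless.
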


\begin{proof}
By Theorem \ref{thm:JC}, $u$ has a jointly continuous local time $L(\cdot, I)$
on any interval $I \subset T$.
Also, according to the paragraph preceding Theorem \ref{thm:LT_E}, 
$L(\cdot, I)$ is a.e.\ equal to the inverse $L^2$-Fourier transform of $\hat\nu_I$, 
which can be expressed as 
the limit of $L_M(\cdot, I)$ in $L^2(\R^d)$ as $M \to \infty$, where
\[ L_M(v, I) = 
(2\pi)^{-d} \int_{[-M, M]^d} e^{-i\xi \cdot v} \int_I e^{i\xi \cdot u(z)} dz\,d\xi,
\quad v \in \R^d. \]
For any multi-index $p = (p_1, \dots, p_d)$ with $|p| = p_1 + \dots + p_d \le K$,
\[ \partial^p L_M(v, I) = 
(2\pi)^{-d} \int_{[-M, M]^d} \prod_{k=1}^d (-i\xi_k)^{p_k} 
e^{-i\xi \cdot v} \int_I e^{i\xi \cdot u(z)} dz\,d\xi. \]
Let $n = 2m > 0$ be an even number.
We are going to show that, for small subintervals $I$ of $T$, 
$\partial^p L(v, I)$ exists and
\begin{equation}\label{lim:dL}
\sup_{v \in \R^d} \E(|\partial^p L_M(v, I) - \partial^p L(v, I)|^{2m}) \to 0 \quad 
\text{as } M \to \infty.
\end{equation}
Indeed, note that $\partial^p L_M(v, I)$ is real-valued, and for $0 \le M < M'$,
\begin{equation*}
\begin{split}
&\E((\partial^p L_{M'}(v, I) - \partial^p L_M(v, I))^{2m})\\
&=(2\pi)^{-2md}\, \E\int_{([-M', M']^d\setminus [-M, M]^d)^{2m}} 
\prod_{j=1}^{2m} \bigg(\prod_{k=1}^d (-i\xi^j_k)^{p_k} \bigg)e^{-i\xi^j \cdot v}
\int_{I^{2m}} e^{i\sum_{j=1}^{2m}\xi^j \cdot u(z^j)}
\,d\bar z\, d\bar \xi\\
&\le (2\pi)^{-2md}\, \int_{([-M', M']^d\setminus[-M, M]^d)^{2m}} \int_{I^{2m}} 
\bigg(\prod_{j=1}^{2m} \prod_{k=1}^d {|\xi^j_k|}^{p_k}\bigg) 
\E\big(e^{i\sum_{j=1}^{2m}\xi^j \cdot u(z^j)}\big) \, d\bar z \, d\bar \xi,
\end{split}
\end{equation*}
uniformly in $v$.
By Lemma \ref{lem:J} with $q_{j,k} = p_k$ and $q = |p|$,
\begin{equation}
\int_{\R^{2md}} \int_{I^{2m}} \bigg(\prod_{j=1}^{2m} \prod_{k=1}^d {|\xi^j_k|}^{p_k}
\bigg) \E\big(e^{i\sum_{j=1}^{2m}\xi^j \cdot u(z^j)}\big)\, d\bar z \, d\bar \xi < \infty.
\end{equation}
Then, by the dominated convergence theorem, as $M \to \infty$, 
$\partial^p L_M(v, I)$ converges to a limit, denoted by $X_p(v, I)$, in $L^{2m}(\P)$, 
uniformly in $v$.
In particular, we can extract a subsequence $M_j \to \infty$ such that 
\[\sup_{v \in \R^d} \E(|\partial^pL_{M_j}(v, I) - \partial^p L_{M_{j-1}}(v, I)|) 
\le 2^{-j}.\]
For each compact set $F \subset \R^d$, by Fubini's theorem,
\[ \E\int_{F} \sum_{j=2}^\infty |\partial^p L_{M_j}(v, I) - \partial^p L_{M_{j-1}}(v, I)|\, dv 
\le \sum_{j=2}^\infty 2^{-j} \lambda_d(F) < \infty.\]
Hence, by taking a sequence of compact sets $F_i \uparrow \R^d$,
we can find a single event of probability 1 on which 
$\sum_{j=2}^\infty |\partial^p L_{M_j}(v, I) - \partial^p L_{M_{j-1}}(v, I)|$ is 
locally integrable in $v$, so that 
$\partial^p L_{M_j}(v, I) \to X_p(v, I)$ in $L^1_{\mathrm{loc}}(\R^d)$ a.s.
Also, we know that $\int |L_M(v, I) - L(v, I)|^2 dv \to 0$ a.s.
These imply that, on an event of probability 1,
for all smooth test functions $\phi(v)$ with compact support, 
\begin{equation*}
\begin{split}
\int_{\R^d} X_p(v, I)\, \phi(v)\, dv 
&= \lim_{j \to \infty}\int_{\R^d} \partial^p L_{M_j}(v, I)\, \phi(v)\, dv\\ 
&= \lim_{j \to \infty} (-1)^{|p|} \int_{\R^d} L_{M_j}(v, I)\, \partial^p \phi(v)\, dv\\
&= (-1)^{|p|}  \int_{\R^d} L(v, I)\, \partial^p \phi(v)\, dv.
\end{split}
\end{equation*}
This proves, for each $|p|\le K$, the existence of 
$\partial^p L(v, I) (= X_p(v, I))$ as a weak derivative of $L(v, I)$,
which satisfies \eqref{lim:dL}.
The existence of (jointly) continuous derivatives and their H\"older continuity
will follow from an application of Kolmogorov's continuity theorem as in Theorem \ref{thm:JC}
once we show that there is some constant $0 < \delta < 1$ such that 
for all sufficiently small intervals $I \subset T$,
\begin{align}\label{M:dL}
\E(|\partial^p L(v, I)|^n) &\le C_n {[\lambda_{1+N}(I)]}^{n\delta},
\end{align}
and for any constant $0 < \gamma < \min\{ \frac 1 2 (\frac{1+N}{\alpha} - d - 2|p|), 1\}$,
\begin{align}
\label{MI:dL}
\E(|\partial^p L(v, I) - \partial^p L(\tilde v, I)|^n)
&\le C'_n |v - \tilde v|^{n\gamma},
\end{align}
where $C_n$ and $C'_n$ are constants depending on $n = 2m$ (and also on $\gamma$ for $C'_n$), but not on $v, \tilde v$ or $I$.
In fact, \eqref{M:dL} follows from
\begin{equation*}
\begin{split}
\E(|\partial^p L(v, I)|^{2m}) 
& = \lim_{M \to \infty} \E((\partial^p L_M(v, I))^{2m})\\ 
&\le C^{2m} \int_{\R^{2md}} \int_{I^{2m}} 
\bigg(\prod_{j=1}^{2m} \prod_{k=1}^d {|\xi^j_k|}^{p_k}\bigg) 
\E\big(e^{i\sum_{j=1}^{2m}\xi^j \cdot u(z^j)}\big)\, d\bar z \, d\bar \xi 
\end{split}
\end{equation*}
and Lemma \ref{lem:J}(i) with $q_{j, k} = p_k$ and $q = |p|$, which yields
$\delta = (1 - \frac{\alpha(d+2|p|)}{1+N})\frac{d}{d+2|p|}$.
As for \eqref{MI:dL}, use the inequality 
$|e^{-ix} - e^{-iy}| \le 2|x-y|^\gamma$ for $0 < \gamma < 1$ to get that
\begin{equation*}
\begin{split}
&\E((\partial^p L(v, I) - \partial^p L(\tilde v, I))^{2m})\\
& \le C^{2m} \int_{\R^{2md}} \int_{I^{2m}} 
\bigg( \prod_{j=1}^{2m} \prod_{k=1}^d |\xi^j_k|^{p_k}\bigg)
\bigg(\prod_{j=1}^{2m}\big|e^{-i\xi^j\cdot v} - e^{-i\xi^j\cdot \tilde v}\big|\bigg)
\E\big(e^{i\sum_{j=1}^{2m} \xi^j\cdot u(z^j)}\big) \, d\bar z\, d\bar\xi\\
& \le C^{2m} |v - \tilde v|^{2m\gamma} \sum_{(k_1, \dots, k_{2m})}
\int_{\R^{2md}} \int_{I^{2m}} 
\bigg( \prod_{j=1}^{2m}\prod_{k=1}^d |\xi^j_k|^{p_k} \bigg)
\bigg( \prod_{j=1}^{2m} |\xi^j_{k_j}|^{\gamma} \bigg)
\E\big(e^{i\sum_{j=1}^{2m} \xi^j\cdot u(z^j)}\big) \, d\bar z\, d\bar\xi,
\end{split}
\end{equation*}
where the sum is taken over all $(k_1, \dots, k_{2m}) \in \{1, \dots, d\}^{2m}$.
Then, estimate the integral term using Lemma \ref{lem:J}
with $q_{j, k} = p_k + \gamma$ if $k = k_j$, and $q_{j, k} = p_k$ otherwise, 
to finish the proof.
\end{proof}

\smallskip

Next, we study the regularity of the local time in the set variable.
The following definition can be found in \cite[p.227]{A}. Let $0 < \gamma \le 1$.
We say that the local time $L$ satisfies a uniform H\"older condition of order $\gamma$
in the set variable if there exists $C< \infty$ such that
for all $v \in \R^d$ and all cubes $I \subset T$ with a sufficiently small side length, 
we have
\[ L(v, I) \le C [\lambda(I)]^\gamma, \]
where $\lambda(I)$ is the Lebesgue measure of the cube $I$.
H\"older conditions of the local times of random fields 
contain rich information about irregularity properties of the sample paths; 
see \cite{A, B72, GH80}.

In fact, we are going to present a result (Theorem \ref{MC:L*}) 
which provides not only information about H\"older condition but also 
the moduli of continuity of the local times in the set variable.
To establish this result, we need the following lemma.

\begin{lemma}\label{lem3}
Assume \eqref{beta} and $\alpha d < 1+N$.
Let $T$ be a compact interval in $(0, \infty)\times \R^N$.
Then the following hold for some constant $r_0 > 0$:
\begin{enumerate}
\item[(i)] For any $b > 0$, there exists a finite constant $c$ such that 
for all $(t, x) \in T \cup \{ 0 \}$ and intervals $I$ in $T$ with side lengths $= r \le r_0$, 
for all $v \in \R^d$ and $A > 1$,
\begin{equation}
\P\Big\{ L(v + u(t, x), I) \ge c \, r^{1+N-\alpha d} A^{\alpha d}  \Big\} \le \exp(-bA).
\end{equation}
\item[(ii)] For any $b > 0$ and 
$0 < \gamma < \min\{\frac 1 2 (\frac{1+N}{\alpha} - d), 1\}$, 
there exists a finite constant $c$ such that for all $(t, x) \in T \cup \{ 0 \}$,
for all intervals $I$ in $T$ with side lengths $= r \le r_0$, 
for all $v, \tilde v \in \R^d$ and $A > 1$,
\begin{equation}
\begin{split}
\P\Big\{ |L(v + u(t, x)&, I) - L(\tilde v + u(t, x), I)|\\
& \ge c\, |v - \tilde v|^\gamma r^{1+N-\alpha d - \alpha \gamma} 
A^{\alpha d + (\frac 1 2 +\alpha)\gamma} \Big\} \le \exp(-bA).
\end{split}
\end{equation}
\end{enumerate}
\end{lemma}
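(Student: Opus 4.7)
\emph{Proof plan.} My plan is to deduce both tail bounds from uniform moment estimates for the shifted local time $L(v+u(t,x),I)$, combined with Chebyshev's inequality and an optimization in the moment order, following the strategy that is standard for Gaussian local times (cf.~\cite{GH80,X08}).

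The main step is to extend the moment bounds of Proposition~\ref{prop:M-LT} to the shifted local time: for some $r_0>0$ and all even $n\ge 2$, $(t,x)\in T\cup\{0\}$, $v\in\R^d$, and cubes $I\subset T$ of side length $r\le r_0$,
\begin{align*}
\E\big[L(v+u(t,x),I)^n\big] &\le C^n(n!)^{\alpha d}r^{n(1+N-\alpha d)},\\
\E\big[|L(v+u(t,x),I)-L(\tilde v+u(t,x),I)|^n\big] &\le C^n|v-\tilde v|^{n\gamma}(n!)^{\alpha d+(\frac12+\alpha)\gamma}r^{n(1+N-\alpha(d+\gamma))}.
\end{align*}
The Fourier representation \eqref{Eq:M-LT} applied to the centered Gaussian field $\{u(z)-u(t,x):z\in I\}$ bounds the $n$th moment by $(2\pi)^{-nd/2}\int_{I^n}[\det\mathrm{Cov}(Y_1,\ldots,Y_n)]^{-d/2}\,d\bar z$ with $Y_j=u_1(z^j)-u_1(t,x)$. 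Writing $\mathrm{Cov}(Y)=C+cc^{\top}$ with $C_{jk}=\mathrm{Cov}(u_1(z^j),u_1(z^k)\mid u_1(t,x))$, the matrix determinant lemma gives
$\det\mathrm{Cov}(Y)\ge\det C=\prod_{j=1}^n\mathrm{Var}(u_1(z^j)\mid u_1(z^1),\ldots,u_1(z^{j-1}),u_1(t,x))$.
Each factor is lower-bounded by a spherical integral via Proposition~\ref{prop:LND} applied to the augmented set of conditioning points, after which the computation mirrors the arithmetic in the proofs of Proposition~\ref{prop:M-LT}(i) and Lemma~\ref{lem:J}(ii); the difference estimate uses $|e^{-ix}-e^{-iy}|\le 2|x-y|^\gamma$ exactly as in Proposition~\ref{prop:M-LT}(ii). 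The case $(t,x)=0$ reduces directly to Proposition~\ref{prop:M-LT} since $u(0,\cdot)\equiv 0$.

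With the moment bounds in hand, Chebyshev's inequality gives, for part~(i) with $\lambda=cr^{1+N-\alpha d}A^{\alpha d}$,
\[ \P\big\{L(v+u(t,x),I)\ge\lambda\big\}\le\lambda^{-n}\,\E\big[L(v+u(t,x),I)^n\big]\le(C/c)^n(n!)^{\alpha d}/A^{n\alpha d}, \]
and analogously for part~(ii). Taking $n$ to be the smallest even integer with $n\ge A$ and invoking Stirling's formula yields $(n!)^{\beta}/A^{n\beta}\le(C_1 e^{-\beta})^n$ for any fixed $\beta>0$, so the probability bound becomes $(C_2/c)^n$; choosing $c$ sufficiently large that $C_2/c<e^{-b}$ produces the desired $\exp(-bA)$.

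The main technical obstacle is extending the moment estimate when $(t,x)\in T$ lies at distance greater than $r_0$ from $I$, since Proposition~\ref{prop:LND} only controls conditional variances when every conditioning point is within $r_0$ of the target. I would handle this by splitting on $\mathrm{dist}((t,x),I)$: for distance $\le r_0$, Proposition~\ref{prop:LND} applies directly with $(t,x)$ appended to $\{z^1,\ldots,z^{j-1}\}$; for distance $>r_0$, I would invoke the identity $\det C=\det\mathrm{Cov}(u_1(z^j))_{j=1}^n\cdot\mathrm{Var}(u_1(t,x)\mid u_1(z^j)_{j=1}^n)/\mathrm{Var}(u_1(t,x))$ and verify that the last factor is uniformly bounded below by a positive constant in that regime, reducing the estimate back to Proposition~\ref{prop:M-LT}(i).
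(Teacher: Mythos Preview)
Your approach coincides with the paper's: pass to the centered field $\tilde u(z)=u(z)-u(t,x)$ so that its local time equals $L(\cdot+u(t,x),I)$, bound the conditional variances of $\tilde u_1$ from below by those of $u_1$ with the extra conditioning point $(t,x)$ via Proposition~\ref{prop:LND}, rerun the moment computations of Proposition~\ref{prop:M-LT} and Lemma~\ref{lem:J}(ii), and conclude by Chebyshev plus Stirling with $n\approx A$. Your matrix determinant lemma step is just a repackaging of the paper's one-line inequality $\mathrm{Var}(\tilde u_1(z)\mid \tilde u_1(z^1),\ldots)\ge \mathrm{Var}(u_1(z)\mid u_1(t,x),u_1(z^1),\ldots)$, which holds because the span of the $\tilde u_1(z^i)$ is contained in the span of $u_1(t,x),u_1(z^1),\ldots$.

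You are right to flag the hypothesis $\max_j(|t-t^j|+|x-x^j|)\le r_0$ in Proposition~\ref{prop:LND}: when $(t,x)\in T$ is far from $I$ it is not immediate that the proposition applies to the augmented conditioning set, and the paper's proof does not comment on this. Your proposed fix via the identity $\det C=\det\mathrm{Cov}(u_1(z^j))_j\cdot \mathrm{Var}(u_1(t,x)\mid u_1(z^j)_j)/\mathrm{Var}(u_1(t,x))$ is algebraically correct, but the asserted uniform lower bound on $\mathrm{Var}(u_1(t,x)\mid u_1(z^j),\,z^j\in I)$ for $(t,x)$ at distance $>r_0$ from $I$ and arbitrary $n$ is not a consequence of any result proved in the paper; you would need a separate argument (for instance, a variant of the Fourier-analytic proof of Proposition~\ref{prop:LND} tailored to a single far conditioning cube, or a compactness argument after first establishing positivity of $\mathrm{Var}(u_1(t,x)\mid \sigma(u_1(z):z\in I))$). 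It is worth noting that in the only place Lemma~\ref{lem3} is applied (the proof of Theorem~\ref{MC:L*}) one uses either $(t,x)=0$ or $(t,x)$ equal to the center of $I$, so the far case is never actually invoked; the lemma is simply stated more generally than the paper needs.
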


\begin{proof}
For $(t, x) = 0$, note that $u(t, x) = 0$. In this case, 
(i) and (ii) can be proved by the moment estimates in 
Proposition \ref{prop:M-LT} with the use of Chebyshev's inequality and Stirling's formula.
For $(t, x) = (t^0, x^0) \in T$, consider the process 
$\tilde u = \{ \tilde u(s, y) := u(s, y) - u(t^0, x^0) : (s, y) \in T\}$,
so that the local time $\tilde L$ of $\tilde u$ exists and 
satisfies $\tilde L(v, I) = L(v + u(t^0, x^0), I)$.
By Proposition \ref{prop:LND}, the conditional variance of $\tilde u$ satisfies
\begin{align*}
&\mathrm{Var}(\tilde u(t, x)|\tilde u(t^1, x^1), \dots, \tilde u(t^{n}, x^{n}))\\
& \ge \mathrm{Var}(u(t, x)|u(t^0, x^0), u(t^1, x^1), \dots, u(t^{n}, x^{n}))\\
& \ge C \int_{\mathbb{S}^{N-1}}\, \min_{0\le i \le n} |(t-t^i) + (x - x^i) \cdot w|^{2\alpha} \, \sigma(dw).
\end{align*}
With slight modifications, the proof of Proposition \ref{prop:M-LT} can be carried 
over to $\tilde u$ and $\tilde L$ to yield
\begin{gather*}
\E[L(v + u(t, x), I)^n] \le C^n (n!)^{\alpha d} r^{n(1+N-\alpha d)},\\
\E[(L(v + u(t, x), I) - L(\tilde v + u(t, x), I))^n] \le 
C^n |v - \tilde v|^{n\gamma} (n!)^{\alpha d + (\frac 1 2 +\alpha)\gamma} 
r^{n(1+N - \alpha d - \alpha \gamma)}
\end{gather*}
for $\gamma < \min\{\frac 1 2 (\frac{1+N}{\alpha}-d), 1\}$ and $n\ge 2$ even.
These estimates imply (i) and (ii) as in the first part the proof.
\end{proof}

From this, we can deduce the local and uniform moduli of continuity 
of the local times in the set variable.
Let $I_r(t, x) = [t-r, t+r] \times \prod_{\ell=1}^N [x_\ell - r, x_\ell+r]$, and let
$\mathscr{I}(T, r)$ denote the set of all intervals $I_\rho(t, x)$ in $T$ 
with $\rho \le r$.

\begin{theorem}\label{MC:L*}
Assume \eqref{beta} and $\alpha d < 1+N$.
For any compact interval $T$ in $(0, \infty) \times \R^N$,
there exists a finite constant $C_1$ such that for any fixed $(t, x) \in T$,
\begin{equation}\label{MCLT1}
\limsup_{r \to 0} \sup_{v \in \R^d}\,\frac{L(v, I_r(t, x))}
{r^{1+N - \alpha d} (\log\log(1/r))^{\alpha d}} \le C_1 \quad \text{a.s.}
\end{equation}
Moreover, there exists a finite constant $C_2$ such that
\begin{equation}\label{MCLT2}
\limsup_{r \to 0} \sup_{I \in \mathscr{I}(T, r)} \sup_{v \in \R^d}\,\frac{L(v, I)}
{[\lambda_{1+N}(I)]^{1- \frac{\alpha d}{1+N}} {(\log(1/\lambda_{1+N}(I)))}^{\alpha d}} 
\le C_2 \quad \text{a.s.}
\end{equation}
Hence, $L$ satisfies a uniform H\"older condition of order $\gamma$ in the set variable
for any $0 < \gamma < 1-\frac{\alpha d}{1+N}$.
\end{theorem}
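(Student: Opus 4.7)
The plan is to combine the exponential tail estimates from Lemma \ref{lem3} with Borel--Cantelli along dyadic scales, following the standard scheme for moduli of continuity of local times of Gaussian random fields (cf.\ the arguments in \cite{E81, X97, X08}). The two estimates in Lemma \ref{lem3} are tuned so that (i) gives the main size bound at one point $v$, and (ii) lets us propagate that bound to a continuous supremum in $v$ by interpolation on a sufficiently fine grid.

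For the local modulus \eqref{MCLT1}, I would fix $(t, x) \in T$, set $r_n = 2^{-n}$, and first restrict the supremum in $v$ to a bounded set. By Proposition \ref{prop:HR} combined with Kolmogorov's continuity theorem, $u$ is a.s.\ locally H\"older of every exponent $\alpha' < \alpha$, so $L(v, I_{r_n}(t, x))$ vanishes for $v$ outside the closure of $u(I_{r_n}(t, x))$, which is contained in a ball $B(u(t, x), K r_n^{\alpha'})$ with an a.s.\ finite random $K$. Cover this ball by a grid of centres $\{v_k^{(n)}\}$ of polynomial-in-$1/r_n$ cardinality and spacing fine enough that the oscillation bound given by Lemma \ref{lem3}(ii) between adjacent grid points is dominated by the target expression $r_n^{1+N-\alpha d}(\log\log(1/r_n))^{\alpha d}$. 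Apply Lemma \ref{lem3}(i) at each $v_k^{(n)}$ with $A = A_0 \log\log(1/r_n)$ and $b$ large: the tail probability $\exp(-bA) = (\log(1/r_n))^{-bA_0}$ is summable in $n$ after a union bound over $k$. Borel--Cantelli then controls the grid values, and a second Borel--Cantelli application together with Lemma \ref{lem3}(ii) controls the fluctuation between grid points, giving the bound for $I_{r_n}(t, x)$; monotonicity of $L(v, \cdot)$ as a measure transfers it to $I_r(t, x)$ for $r_{n+1} \le r \le r_n$.

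For the uniform modulus \eqref{MCLT2}, the scheme is analogous but requires two levels of discretization. I would cover $T$ by $O(r_n^{-(1+N)})$ centres $(t_\ell, x_\ell)$ so that every $I \in \mathscr{I}(T, r_n)$ lies in some $I_{2 r_n}(t_\ell, x_\ell)$; for each centre, build a $v$-grid covering $B(u(t_\ell, x_\ell), C r_n^{\alpha'})$ as before. Because the union bound now runs over a polynomial-in-$1/r_n$ number of pairs, Lemma \ref{lem3}(i) must be applied with $A = A_0 \log(1/r_n)$ (rather than $\log\log$), so that $\exp(-bA) = r_n^{bA_0}$ defeats the polynomial count for $bA_0$ large enough. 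Lemma \ref{lem3}(ii) handles the interpolation in $v$ just as before, and monotonicity in the set variable upgrades the conclusion from dyadic $I_{2 r_n}(t_\ell, x_\ell)$ to all $I \in \mathscr{I}(T, r)$. The uniform H\"older assertion is then immediate from \eqref{MCLT2}: for any $\gamma < 1 - \alpha d/(1+N)$, the extra logarithmic factor is absorbed by $[\lambda_{1+N}(I)]^{1-\alpha d/(1+N)-\gamma}$ as $\lambda_{1+N}(I) \to 0$.

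The main obstacle is the coordinated choice of the spacing of the $v$-grid (and of the $(t, x)$-grid in the uniform case) together with the constant $A_0$ in the tail exponent. The interpolation contribution from Lemma \ref{lem3}(ii) must be dominated by the target bound, which fixes an upper limit on the grid spacing; meanwhile the probabilistic union bound over all grid points has to remain summable, which forces $bA_0$ to exceed an explicit threshold depending on $N, d, \alpha$ and on the $v$-grid spacing. Once these numerical choices are reconciled and Borel--Cantelli is invoked at each scale, both \eqref{MCLT1} and \eqref{MCLT2} follow by parallel arguments, with $C_1$ and $C_2$ depending only on the final choice of exponents.
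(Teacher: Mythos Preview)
Your overall strategy---dyadic scales, Lemma \ref{lem3} combined with Borel--Cantelli, and a discretization in $v$---is the same as the paper's. But there is a concrete gap in your argument for the local modulus \eqref{MCLT1}. You confine $v$ to a ball of radius $K r_n^{\alpha'}$ with a \emph{strict} inequality $\alpha' < \alpha$, and then take a grid whose spacing is ``fine enough that the oscillation bound from Lemma \ref{lem3}(ii) is dominated by the target''. With $A = A_0\log\log(1/r_n)$ that forces spacing $\theta_n \asymp r_n^{\alpha}(\log\log(1/r_n))^{-(\frac12+\alpha)}$, hence the grid has cardinality of order $(r_n^{\alpha'}/\theta_n)^d \asymp 2^{\,n(\alpha-\alpha')d}$---exponential in $n$. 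The union bound then reads $2^{\,n(\alpha-\alpha')d}\cdot n^{-bA_0}$, which is \emph{not} summable for any choice of $b$ or $A_0$. So the sentence ``the tail probability $\exp(-bA)=(\log(1/r_n))^{-bA_0}$ is summable in $n$ after a union bound over $k$'' is false as written.

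The paper resolves this in two coupled ways. First, rather than the crude H\"older bound it invokes a Gaussian supremum estimate (Talagrand \cite{T95}) to get, on a Borel--Cantelli event, $\sup_{(s,y)\in J_n}|u(s,y)-u(t,x)|\le \delta_n := c_0\,2^{-n\alpha}\sqrt{\log n}$. The point is that $\delta_n/\theta_n$ is now only polylogarithmic, so the grid $G_n$ has cardinality $\le C(\log n)^{(1+\alpha)d}$ and the union bound with $A\asymp\log\log(1/r_n)$ does sum. Second, to pass from the discrete grid to the full supremum over $v$, a single interpolation between adjacent grid points (which only controls finitely many pre-specified pairs) is not enough; the paper runs a genuine dyadic chaining, bounding linked pairs in refinement sets $F(n,k,v)$ via Lemma \ref{lem3}(ii) with $A = k\log n$ and then telescoping. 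Your treatment of the uniform modulus \eqref{MCLT2} is essentially sound, since there $A\asymp\log(1/r_n)$ already beats polynomial counts; it is precisely the local case where the combinatorics you propose break down.
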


\begin{remark}
By the strong LND property and the results of Xiao \cite{X97, X08}, 
the local and uniform moduli of continuity for the 
local times of $t \mapsto u(t, x_0)$ and $x \mapsto u(t_0, x)$ are as follows
in comparison to the above theorem.
For $t \mapsto u(t, x_0)$ and $\alpha d < 1$, those are
\[ r^{1-\alpha d}(\log\log(1/r))^{\alpha d} \quad \text{and} \quad
[\lambda_1(I)]^{1-\alpha d}(\log(1/\lambda_1(I)))^{\alpha d}.\]
For $x \mapsto u(t_0, x)$ and $\alpha d < N$, those are
\[ r^{N-\alpha d}(\log\log(1/r))^{\frac{\alpha d}{N}} \quad \text{and} \quad
[\lambda_N(I)]^{1-\frac{\alpha d}{N}}(\log(1/\lambda_N(I)))^{\frac{\alpha d}{N}}.\]
\end{remark}

\begin{proof}[Proof of Theorem \ref{MC:L*}]
The proof of this theorem is based on Lemma \ref{lem3} and
a chaining argument as in \cite{E81, X97}. 
We will make some necessary changes for our purposes.

Let $L^*(I) = \sup\{ L(v, I) : v \in \R^d\}$ and 
$\varphi(r) = r^{1+N - \alpha d} (\log\log(1/r))^{\alpha d}$.
To prove \eqref{MCLT1}, it suffices to prove that for any fixed $(t, x) \in I$, 
\begin{equation}\label{L*}
\limsup_{n \to \infty} \frac{L^*(J_n)}{\varphi(2^{-n})} \le C_1 \quad \text{a.s.},
\end{equation}
where $J_n = I_{2^{-n}}(t, x)$. The proof of \eqref{L*} is divided into four steps.

Step 1.
Let $\delta_n = c_0\, 2^{-n\alpha} \sqrt{\log n}$.
By Lemma 2.1 of Talagrand \cite{T95}, there exists a constant $c_0< \infty$
such that for $n$ large,
\begin{equation*}
\P\bigg\{ \sup_{(s, y) \in J_n} |u(s, y) - u(t, x)| \ge \delta_n \bigg\} \le n^{-2},
\end{equation*}
which, by the Borel--Cantelli lemma, implies that with probability 1, for all $n$ large,
\begin{equation}\label{supu}
\sup_{(s, y) \in J_n} |u(s, y) - u(t, x)| \le \delta_n.
\end{equation}

Step 2. Let $\theta_n = 2^{-n\alpha} (\log n)^{-\frac 1 2 -\alpha}$ and
\begin{equation*}
G_n = \Big\{ v \in \R^d: 
|v| \le \delta_n \text{ and } v = \theta_n p \text{ for some } p \in \Z^d \Big\}.
\end{equation*}
The cardinality of $G_n$ is $\le C (\log n)^{(1 +\alpha)d}$.
By Lemma \ref{lem3}(i) with $b = 2$, 
we can find a finite constant $c$ such that for all $n$ large
\begin{equation*}
\P\bigg\{ \max_{v \in G_n} L(v+u(t, x), J_n) \ge c\, \varphi(2^{-n}) \bigg\} 
\le C (\log n)^{(1 +\alpha)d}\, n^{-2}.
\end{equation*}
It follows from the Borel--Cantelli lemma that with probability 1, for all $n$ large,
\begin{equation}\label{maxL}
\max_{v \in G_n} L(v + u(t, x), J_n) \le c\, \varphi(2^{-n}).
\end{equation}

Step 3. For $n, k \ge 1$ and $v \in G_n$, define
\begin{equation*}
F(n, k, v) = \Big\{ y \in \R^d : y = v + \theta_n \sum_{j=1}^k \eps_j 2^{-j},
\eps_j \in \{0, 1\}^d \text{ for } 1\le j \le k \Big\}.
\end{equation*}
We say that a pair $y_1, y_2 \in F(n, k, v)$ is linked 
if $y_1 - y_2 = \theta_n \eps 2^{-k}$ for some $\eps \in \{0, 1\}^d$.
Fix any $0 < \gamma < \min\{\frac 1 2 (\frac{1+N}{\alpha} - d), 1\}$.
Consider the event $A_n$ defined by
\begin{equation*}
\begin{split}
A_n = \bigcup_{v \in G_n} \bigcup_{k \ge 1} \bigcup_{y_1\sim y_2} 
\Big\{ &|L(y_1 + u(t, x), J_n) - L(y_2 + u(t, x), J_n)|\\
& \quad \ge c\, 2^{-n(1+N - \alpha d - \alpha\gamma)} |y_1 - y_2|^{\gamma}
(k\log n)^{\alpha d + (\frac1 2+\alpha) \gamma} \Big\},
\end{split}
\end{equation*}
where $\bigcup_{y_1\sim y_2}$ denotes 
the union over all linked pairs in $y_1, y_2 \in F(n, k, v)$. 
There are at most $2^{kd} 3^d$ linked pairs in $F(n, k, v)$.
Then by Lemma \ref{lem3}(ii) with $b = 2$, for $n$ large, 
\begin{equation*}
\begin{split}
\P(A_n) &\le C (\log n)^{(1+\alpha)d} \sum_{k=1}^\infty 2^{kd} \exp(-2k\log n)\\
& = C (\log n)^{(1+\alpha)d} \frac{2^d n^{-2}}{1-2^d n^{-2}},
\end{split}
\end{equation*}
so $\sum_{n=1}^\infty \P(A_n) < \infty$. By the Borel--Cantelli lemma,
a.s.\ $A_n$ occurs at most finitely many times.

Step 4. We proceed with the chaining argument in \cite{E81, X97}.
For $y \in \R^d$ with $|y| \le \delta_n$, we can represent $y$ 
as the limit of $(y_k)$, where
\begin{equation*}
y_k = v + \theta_n \sum_{j=1}^k \eps_j 2^{-j},
\end{equation*}
$y_0 := v \in G_n$ and $\eps_j \in \{0, 1\}^d$ for $j = 1, \dots, k$.
Since $L(v, J_n)$ is continuous in $v$, we see that on the event $A_n^c$,
\begin{equation}\label{L-L}
\begin{split}
&|L(y + u(t, x), J_n) - L(v + u(t, x), J_n)|\\
& \le \sum_{k=1}^\infty |L(y_k + u(t, x), J_n) - L(y_{k-1} + u(t, x), J_n)|\\
& \le \sum_{k=1}^\infty c\, 2^{-n(1+N-\alpha d - \alpha \gamma)}
{(\theta_n 2^{-k})}^\gamma {(k\log n)}^{\alpha d + (\frac 1 2+\alpha)\gamma}\\
& \le c \, 2^{-n(1+N-\alpha d)} (\log n)^{\alpha d}
\sum_{k=1}^\infty k^{\alpha d + (\frac 1 2+\alpha)\gamma} 2^{-k\gamma}\\
& \le C \varphi(2^{-n}).
\end{split}
\end{equation}
Now, \eqref{maxL} and \eqref{L-L} imply that a.s.\ for all $n$ large, 
\begin{equation*}
\sup_{|y| \le \delta_n} L(y + u(t, x), J_n) \le C \varphi(2^{-n}).
\end{equation*}
Since $L(\cdot, J_n)$ is supported on $\overline{u(J_n)}$, 
this, together with \eqref{supu}, implies \eqref{L*} and hence \eqref{MCLT1}.

The proof of \eqref{MCLT2} is similar. 
Let $\Phi(r) = r^{1- \frac{\alpha d}{1+N}}(\log(1/r))^{\alpha d}$
and $\mathscr{D}_n$ denote the collection of dyadic cubes 
$\prod_{j=1}^{1+N}[i_j2^{-n}, (i_j+1)2^{-n}]$ that intersect with $T$, 
where $i_j \in \Z$.
Note that any interval $I = I_r(t, x)$ with $r \le 2^{-n}$ can be 
covered by at most $8^{1+N}$ dyadic intervals $D_i$ of side length $\le r$
such that each $D_i$ is in $\bigcup_{m \ge n} \mathscr{D}_m$ and
has Lebesgue measure $\lambda_{1+N}(D_i) \le \lambda_{1+N}(I)$.
Then
\[L^*(I) \le 8^{1+N} \sup_{m \ge n} \sup_{D \in \mathscr{D}_m} L^*(D).\]
Also, $\Phi(r)$ is increasing for $r > 0$ small, so it suffices to prove that
\begin{equation}\label{supL*}
\limsup_{n \to \infty} \max_{D \in \mathscr{D}_n} \frac{L^*(D)}{\Phi(\lambda_{1+N}(D))} 
\le C_2 \quad \text{a.s.}
\end{equation}
To this end, define $\theta_n = 2^{-n\alpha}(\log 2^n)^{-\frac 1 2 -\alpha}$ and
\[ G_n = 
\Big\{ v \in \R^d : |v| \le n \text{ and } v = \theta_n p \text{ for some } p \in \Z^d \Big\}. \]
By Lemma \ref{lem3}(i), we can find a large enough $C$ so that 
a.s.\ for all $n$ large,
\begin{equation}\label{maxL*}
\max_{D \in \mathscr{D}_n} \max_{v \in G_n} L(v, D) \le C \Phi(\lambda_{1+N}(D)) 
\quad \text{a.s.}
\end{equation}
Define $F(n, k, v)$ as in the the proof of \eqref{MCLT1} above, and similarly, let
\begin{align*}
A_n = 
\bigcup_{D \in \mathscr{D}_n} \bigcup_{v \in G_n} \bigcup_{k \ge 1} 
\bigcup_{y_1\sim y_2} \Big\{ & |L(y_1, D) - L(y_2, D)| \\
& \quad \ge C 2^{-n(1+N - \alpha d - \alpha \gamma)}|y_1 - y_2|^\gamma
(k \log 2^n)^{\alpha d + (\frac 1 2+\alpha) \gamma}\Big\}.
\end{align*}
Then by Lemma \ref{lem3}(ii), a.s.\ $A_n$ occurs at most finitely many times. 
Since $u(t, x)$ is continuous, there exists $n_0 = n_0(\omega)$ such that
$\sup_{(t, x) \in T} |u(t, x)| \le n_0$ a.s.
If $|y| \le n$, then by the chaining argument as before, we can deduce that on 
the event $A_n^c$,
\begin{align*}
|L(y, D) - L(v, D)| &\le C 2^{-n(1+N-\alpha d)} (\log 2^n)^{\alpha d}\\
& \le C \Phi(\lambda_{1+N}(D)).
\end{align*}
Then by \eqref{maxL*}, we see that a.s.\ for all $n$ large,
\begin{equation}\label{maxL*2}
\max_{D \in \mathscr{D}_n} \sup_{|y|\le n} L(y, D) \le C \Phi(\lambda_{1+N}(D)).
\end{equation}
If $|y| > n_0$, then $y \not\in \overline{u(T)}$, thus $L(y, D) = 0$.
This together with \eqref{maxL*2} implies \eqref{supL*}, and 
hence completes the proof of \eqref{MCLT2}.
\end{proof}

As discussed in \cite{E81}, 
the moduli of continuity of the local times are closely related to 
the degree of oscillations of the sample functions.
The former leads to lower envelops for the oscillations.
A consequence of this is that the solution is nowhere differentiable.

\begin{theorem}\label{thm:ND}
Assume \eqref{beta}.
Let $T$ be a compact interval in $(0, \infty) \times \R^N$.
Then there exist positive constants $C_3$ and $C_4$ such that 
for each fixed $(t, x) \in T$,
\begin{equation}\label{CLIL}
\liminf_{r \to 0} \sup_{(s, y) \in I_r(t, x)} \frac{|u(s, y) - u(t, x)|}{r^\alpha(\log\log(1/r))^{-\alpha}} \ge C_3
\quad \text{a.s.}
\end{equation}
and
\begin{equation}\label{MND}
\liminf_{r \to 0} \inf_{(t, x) \in T} \sup_{(s, y) \in I_r(t, x)} \frac{|u(s, y) - u(t, x)|}{r^\alpha(\log(1/r))^{-\alpha}} \ge C_4 
\quad \text{a.s.}
\end{equation}
In particular, \eqref{MND} implies that 
$(t, x) \mapsto u(t, x)$ is a.s.\ nowhere differentiable on $(0, \infty) \times \R^N$.
\end{theorem}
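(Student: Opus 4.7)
The plan is to derive the oscillation lower bounds \eqref{CLIL} and \eqref{MND} from the moduli of continuity of the local time in the set variable (Theorem \ref{MC:L*}) via the occupation density formula; nowhere differentiability is then an immediate consequence of \eqref{MND}.

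First, I would reduce to a scalar component. Since $|u(s,y) - u(t,x)| \ge |u_1(s,y) - u_1(t,x)|$ pointwise, it suffices to prove the two oscillation bounds with $u$ replaced by $u_1$. The scalar field $\{u_1(t,x) : (t,x) \in T\}$ inherits the spherical LND property of Proposition \ref{prop:LND}, and the condition $\alpha \cdot 1 < 1+N$ is automatic from $\alpha < 1$. Thus Theorem \ref{MC:L*} applies to $u_1$, producing a local time $L^{(1)}(v, I)$ that satisfies the local and uniform moduli of continuity in the set variable. Since $u_1$ has continuous sample paths and $L^{(1)}$ is jointly continuous, $L^{(1)}(\cdot, I_r(t,x))$ is supported in $\overline{u_1(I_r(t,x))}$.

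Second, I would apply the occupation density formula. For any $(t,x) \in T$ and $r > 0$ small, set
\[ R_r = \sup_{(s,y) \in I_r(t,x)} |u_1(s,y) - u_1(t,x)|. \]
Since $u_1$ is continuous, $u_1(I_r(t,x)) \subset [u_1(t,x) - R_r, u_1(t,x) + R_r]$, so the support of $L^{(1)}(\cdot, I_r(t,x))$ has Lebesgue measure at most $2R_r$. The occupation density formula then yields
\begin{equation*}
(2r)^{1+N} = \lambda_{1+N}(I_r(t,x)) = \int_{\R} L^{(1)}(v, I_r(t,x))\, dv \le 2 R_r \cdot \sup_{v \in \R} L^{(1)}(v, I_r(t,x)).
\end{equation*}
Combining this with \eqref{MCLT1} (applied with $d=1$) gives $R_r \ge C_3\, r^\alpha (\log\log(1/r))^{-\alpha}$ almost surely as $r \to 0$, which is \eqref{CLIL}. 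The uniform bound \eqref{MND} follows by the same calculation, inserting \eqref{MCLT2} in place of \eqref{MCLT1}; here $[\lambda_{1+N}(I_r(t,x))]^{1 - \frac{\alpha}{1+N}} = C\, r^{1+N-\alpha}$ and $\log(1/\lambda_{1+N}(I_r(t,x))) \asymp \log(1/r)$.

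Finally, because $0 < \alpha < 1$, the ratio $r^\alpha (\log(1/r))^{-\alpha}/r = r^{\alpha-1}(\log(1/r))^{-\alpha}$ tends to infinity as $r \to 0$. Therefore \eqref{MND} forces, at every $(t,x) \in T$, a sequence $(s_n, y_n) \to (t,x)$ along which $|u(s_n, y_n) - u(t,x)|/|(s_n, y_n) - (t,x)|$ is unbounded, ruling out any directional derivative of $u$ at $(t,x)$. I do not expect a substantial obstacle here: the argument is essentially a packaging of results already proved in Section 5 (Theorem \ref{MC:L*}) together with the kernel version of the occupation density formula from Section 2. The only point demanding care is the reduction to a single scalar component, which is what allows Theorem \ref{MC:L*} to be invoked without imposing the auxiliary restriction $\alpha d < 1+N$ on the full $\R^d$-valued field.
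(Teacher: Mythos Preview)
Your proposal is correct and follows essentially the same route as the paper: reduce to $d=1$ (the paper phrases this as ``since $u$ has i.i.d.\ components''; you phrase it as $|u|\ge |u_1|$, which is the same reduction), then use the support property of the jointly continuous local time to bound $\lambda_{1+N}(I_r(t,x))$ by $2R_r\cdot \sup_v L^{(1)}(v,I_r(t,x))$, and finally invoke \eqref{MCLT1} and \eqref{MCLT2}. Your explicit justification of nowhere differentiability from \eqref{MND} is a detail the paper leaves implicit.
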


\begin{proof}[Proof of Theorem \ref{thm:ND}]
Since $u$ has i.i.d.\ components, it suffices to prove the result for $d = 1$.
Then $\alpha d = \alpha < 1+N$, so that $u(t, x)$ has a jointly 
continuous local time $L(v, T)$, $v \in \R$.
Fix $(t, x) \in T$ and $r > 0$. 
Since $L(v, I_r(t, x)) = 0$ for $v \not\in \overline{u(I_r(t, x))}$, we have
\begin{equation}\label{LTineq}
\begin{split}
\lambda_{1+N}(I_r(t, x)) & = \int_{\, \overline{u(I_r(t, x))}} L(v, I_r(t, x))\, dv\\
&\le 2 \sup_{(s, y) \in I_r(t, x)} |u(s, y) - u(t, x)| \times \sup_{v \in \R}L(v, I_r(t, x)) .
\end{split}
\end{equation}
Therefore, \eqref{CLIL} follows from \eqref{LTineq} and \eqref{MCLT1},
while \eqref{MND} follows from \eqref{LTineq} and \eqref{MCLT2}.
\end{proof}

The corresponding results for $t \mapsto u(t, x_0)$ and $x \mapsto u(t_0, x)$ are:
\[\liminf_{r \to 0} \sup_{s \in I_r(t)} \frac{|u(s, x_0) - u(t, x_0)|}
{r^\alpha(\log\log(1/r))^{-\alpha}} \ge C, \quad
\liminf_{r \to 0} \inf_{t \in T_1} \sup_{s \in I_r(t)} \frac{|u(s, x_0) - u(t, x_0)|}
{r^\alpha(\log(1/r))^{-\alpha}} \ge C, \]
\[\liminf_{r \to 0} \sup_{y \in I_r(x)} \frac{|u(t_0, y) - u(t_0, x)|}
{r^\alpha(\log\log(1/r))^{-\alpha/N}} \ge C, \quad
\liminf_{r \to 0} \inf_{x \in T_2} \sup_{y \in I_r(x)} \frac{|u(t_0, y) - u(t_0, x)|}
{r^\alpha(\log(1/r))^{-\alpha/N}} \ge C, \]
which follow from the strong LND property and the results of Xiao \cite{X97, X08}.
In particular, this indicates that \eqref{CLIL} is not sharp when $N \ge 2$.
It would be interesting to derive sharp envelops for \eqref{CLIL} and \eqref{MND}
so that the $\liminf$s are positive and finite.
In fact, when $N=1$ and $\dot{W}(t, x)$ is white in time ($H=1/2$) and 
has spatial covariance given by the Riesz kernel, 
the following Chung-type law of the iterated logarithm is proved in \cite{L21}: 
\[ \liminf_{r \to 0} \sup_{(s, y) \in I_r(t, x)} 
\frac{|u(s, y) - u(t, x)|}{r^\alpha(\log\log(1/r))^{-\alpha}} = C \quad \text{a.s.}\]
where $C$ is a positive finite constant.

\end{document}